\newcommand{\nc}{\newcommand}
\numberwithin{equation}{section}
\newtheorem{theorem}{Theorem}[section]
\newtheorem{prop}[theorem]{Proposition}
\newtheorem{importnota}[theorem]{Important Notation}
\newtheorem{prblm}[theorem]{Problem}
\newtheorem{notation}[theorem]{Notation}
\newtheorem{caution}[theorem]{Caution}
\newtheorem{remark}[theorem]{Remark}
\newtheorem{lemma}[theorem]{Lemma}
\newtheorem{construction}[theorem]{Construction}
\newtheorem{corollary}[theorem]{Corollary}
\newtheorem{example}[theorem]{Example}
\newtheorem{conclusion}[theorem]{Conclusion}
\newtheorem{triviality}[theorem]{Triviality}
\newtheorem{proto}[theorem]{Prototype Quasifibration}
\newtheorem{cauex}[theorem]{Cautionary Example}
\newtheorem{propositiondef}[theorem]{Proposition-Definition}
\newtheorem{subth}{Nuisance}[theorem]
\newtheorem{ssubth}{ }[subth]
\newtheorem{conjecture}[theorem]{Conjecture}
\newtheorem{sidest}[theorem]{Side Story}
\newtheorem{miniexample}[theorem]{Example}
\theoremstyle{definition}
\newtheorem{defin}[theorem]{Definition}
\nc\tri[1]{\begin{triviality}}
\nc\side[1]{\begin{sidest}}
\nc\conj[1]{\begin{conjecture}}
\nc\prodef[1]{\begin{propositiondef}}
\nc\prt[1]{\begin{proto}}
\nc\lem[1]{\begin{lemma}}
\nc\sblm[1]{\begin{sublemma}}
\nc\pro[1]{\begin{prop}}
\nc\thm[1]{\begin{theorem}}
\nc\cor[1]{\begin{corollary}}
\nc\dfn[1]{\begin{defin}}
\nc\sthm[1]{\begin{subth}}
\nc\exm[1]{\begin{example}}
\nc\miniexm[1]{\begin{miniexample}}
\nc\plm[1]{\begin{prblm}}
\nc\rmk[1]{\begin{remark}}
\nc\subrmk[1]{\begin{subremark}}
\nc\ntn[1]{\begin{notation}}
\nc\cau[1]{\begin{caution}}
\nc\imn[1]{\begin{importnota}}
\nc\cax[1]{\begin{cauex}}
\nc\con[1]{\begin{construction}}
\nc\ssthm[1]{\begin{ssubth}}
\nc\cnc[1]{\begin{conclusion}}
\nc\elem{\end{lemma}}
\nc\esblm{\end{sublemma}}
\nc\eside{\end{sidest}}
\nc\econj{\end{conjecture}}
\nc\eprodef{\end{propositiondef}}
\nc\eprt{\end{proto}}
\nc\ethm{\end{theorem}}
\nc\ecor{\end{corollary}}
\nc\edfn{\end{defin}}
\nc\esthm{\end{subth}}
\nc\epro{\end{prop}}
\nc\etri{\end{triviality}}
\nc\eexm{\end{example}}
\nc\eminiexm{\end{miniexample}}
\nc\ermk{\end{remark}}
\nc\subermk{\end{subremark}}
\nc\eplm{\end{prblm}}
\nc\ecau{\end{caution}}
\nc\ecax{\end{cauex}}
\nc\eimn{\end{importnota}}
\nc\entn{\end{notation}}
\nc\econ{\end{construction}}
\nc\ecnc{\end{conclusion}}
\nc\essthm{\end{ssubth}}
\newcommand{\C}{\mathbb{C}}
\newcommand{\R}{\mathbb{R}}
\newcommand{\Q}{\mathbb{Q}}
\newcommand{\Z}{\mathbb{Z}}
\newcommand{\N}{\mathbb{N}}
\newcommand{\F}{\mathbb{F}}
\newcommand{\A}{\mathbb{A}}
\newcommand{\E}{\mathcal{E}}
\newcommand{\e}{\varepsilon}
\newcommand{\diag}{{\rm diag}}
\newcommand{\G}{\Gamma}
\newcommand{\ds}{\displaystyle}
\newcommand{\bF}{\overline{\F}}
\newcommand{\lra}{\longrightarrow}
\newcommand{\sgn}{{\rm sgn}}
\newcommand{\ve}{\varepsilon}
\renewcommand{\Bbb}{\mathbb}
\title[Artin representations for $GSp_4$]
{A Conditional construction of Artin representations for\\ 
real analytic Siegel cusp forms of weight $(2,1)$}
\author{Henry H. Kim and Takuya Yamauchi}
\keywords{Artin representation, Siegel modular forms }
\dedicatory{To Jim Cogdell on his 60th birthday}
\thanks{The first author is partially supported by NSERC. The second author
is partially supported by JSPS Grant-in-Aid for Scientific Research No.23740027 and JSPS Postdoctoral 
Fellowships for Research Abroad No.378.}
\subjclass[2010]{Primary 11F46, Secondary 11F70, 11R39}
\address{Henry H. Kim \\
Department of mathematics \\
 University of Toronto \\
Toronto, Ontario M5S 2E4, CANADA \\
and Korea Institute for Advanced Study, Seoul, KOREA}
\email{henrykim@math.toronto.edu}
\address{Takuya Yamauchi \\
Department of mathematics, Faculty of Education\\
Kagoshima University\\
Korimoto 1-20-6 Kagoshima 890-0065, JAPAN and 
Department of mathematics \\
 University of Toronto \\
Toronto, Ontario M5S 2E4, CANADA}
\email{yamauchi@edu.kagoshima-u.ac.jp or tyama@math.toronto.edu}
\begin{document}

\begin{abstract}
Let $F$ be a vector-valued real analytic Siegel cusp eigenform of weight $(2,1)$ with the eigenvalues $-\frac 5{12}$ and $0$ for the two generators of the center of the algebra consisting of all $Sp_4(\R)$-invariant differential operators on the Siegel upper half plane of degree 2. Under natural assumptions in analogy of holomorphic Siegel cusp forms, we construct a unique symplectically odd Artin representation
$\rho_F: G_\Q\lra GSp_4(\C)$ associated to $F$. For this, we develop the arithmetic theory of vector-valued real analytic Siegel modular forms.
Several examples which satisfy these assumptions are given by using various transfers and automorphic induction.
\end{abstract}
\maketitle
\tableofcontents

\section{Introduction}\label{introduction}

Let $G$ be a quasi-split reductive group over $\Q$ and ${}^L G$ be the $L$-group.
The strong Artin conjecture asserts that an irreducible continuous complex representation $\rho: G_{\Bbb Q}={\rm Gal}(\overline{\Bbb Q}/\Bbb Q)\longrightarrow 
{}^L G$ corresponds to an automorphic representation of $G$. We will call such an irreducible complex representation an Artin representation.
Conversely, it is expected that there exists an Artin representation associated to a given 
automorphic representation with certain special properties (such as a special infinity type).

In \cite{d&s}, Deligne and Serre associated an odd irreducible Artin representation $\rho: G_\Q\lra GL_2(\C)$ to any elliptic cusp form of weight one which is a Hecke eigenform. 
Conversely, the strong Artin conjecture is now proved for any odd irreducible 2-dimensional Artin representation: Solvable cases have been known for a long time 
\cite{langlands}, \cite{tunnell}. The icosahedral case is proved in \cite{k&w1} and \cite{kisin}.
Note that the automorphy of an even icosahedral Artin representation $\rho: G_\Q\lra GL_2(\C)$ is still open. 

Contrary to the case $GL_2/\Q$, we can show that there are no holomorphic Siegel cusp forms of weight $(2,1)$
which give rise to Artin representations (Proposition \ref{no-hol}). Therefore we have to look for the corresponding objects inside real analytic Siegel modular forms. 
This fact makes the situation much more difficult as in the case of Maass cusp forms for $GL_2/\Q$. 

In this paper under natural assumptions, we will construct an Artin representation
$\rho: G_\Q\lra GSp_4(\C)$
associated to a real analytic Siegel cusp form of weight $(2,1)$. More precisely, let $F$ be such a form of weight $(2,1)$, level $N$ with central character $\varepsilon$. (See Section \ref{class} for the notion of level and a central character of real analytic Siegel modular forms.) Assume that $F$ has the eigenvalues $-\frac 5{12}$ and $0$ for the elements $\Delta_1$ and $\Delta_2$ of degree 2 and 4 respectively which generate the center of the algebra consisting of all $Sp_4(\R)$-invariant differential operators on the Siegel upper half plane. (See Section \ref{adelic} and the equation (\ref{generators}) for the precise definition of $\Delta_1$ and $\Delta_2$.)
We denote by $S_{(2,1)}(\G(N),-\frac 5{12},0)$, the space consisting of all such $F$. 
Let $t_1={\rm diag}(1,1,p,p)$ and $t_2={\rm diag}(1,p,p^2,p)$ for a prime $p$ and denote by $T_{i,p}$, the Hecke operator 
corresponding to $t_i$ for $i=1,2$ (see Section \ref{integrality}).  
Assume that $F$ is an eigenform for $T_{1,p}$ and $T_{2,p}$, $p\nmid N$ with eigenvalues $a_{1,p}$ and $a_{2,p}$ respectively. 
We define the Hecke polynomial at $p$ by 
$$H_p(T):=1-a_{1,p}T+\{pa_{2,p}+(1+p^{-2})\e(p^{-1})\}T^2-a_{1,p}\e(p^{-1})T^3+\e(p^{-1})^2T^4.$$
Let $\pi_F=\pi_{\infty}\otimes \otimes_p' \pi_p$ be the cuspidal automorphic representation of $GSp_4(\A)$ attached to $F$. 
Then by Theorem \ref{infinity}, $\pi_\infty$ is the full induced representation ${\rm Ind}_B^G\, \chi(1,\sgn,\sgn)$, which is irreducible. In particular, it is tempered and generic. It is a totally degenerate limit of discrete series in the sense of \cite{CK}. It is also denoted as $D_{(1,0)}[0]$ in \cite{mor}. We emphasize here that $\pi_{\infty}$ is not a limit of holomorphic discrete series and it has the minimal $K_\infty$-type $(1,0)$. 

We denote by $\Q_F=\Q(a_{1,p},a_{2,p},\e(p^{-1})\,:\, p\nmid N)$, the Hecke field of $F$. 
We also define the Hecke algebra $\mathbb{T}_\Q$ over $\Q$ as in Section \ref{rat}. 
We then make the following assumptions: 
\begin{enumerate}
\item[(TR)] the existence of the transfer of automorphic representations of $GSp_4$ to $GL_4$ (Section \ref{arthur});
\item[(Gal)] the existence of mod $\ell$ Galois representation attached to $F$ (Conjecture \ref{galois}); 
\item[(Rat)] rationality of the subspace $\langle TF\ |\ T\in \mathbb{T}_\Q  \rangle_\C$ of $S_{(2,1)}(\G(N),-\frac 5{12},0)$ (Section \ref{rationality}); 
\item[(Int)] the integrality of Hecke polynomials $H_p(T)$ of $F$ for all but finitely many prime $p\nmid N$ (Section \ref{ran-sel}).
\end{enumerate}
The assumptions (Gal), (Rat), and (Int) are valid for holomorphic Siegel cusp forms of weight $(k_1,k_2)$, $k_1\geq k_2\geq 2$ (\cite{taylor-thesis}, \cite{taylor}).
We prove 

\begin{theorem} (Main Theorem)\label{artin-rep}
Let $F$ be as above, and assume (TR), (Gal), (Rat), and (Int).
Then there exists the Artin representation 
$\rho_F: G_\Q\lra GSp_4(\C)$ which is unramified outside primes dividing $N$ and symplectically odd, i.e. $\rho_{F}(c)\stackrel{\tiny{GSp_4(\C)}}{\sim }{\rm diag}(1,-1,-1,1)$ for 
the complex conjugation $c$ such that $\det(I_4-\rho_F({\rm Frob}_p)T)=H_p(T)$ 
for all $p\nmid N$. This representation is irreducible if and only if $\pi_F$ is not an endoscopic representation.
\end{theorem}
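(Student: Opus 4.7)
The plan is to imitate the Deligne--Serre \cite{d&s} construction of weight-one complex Galois representations, adapted to $GSp_4$, with the four assumptions playing the following roles. First, by (TR) I transfer $\pi_F$ to an automorphic representation $\Pi$ of $GL_4(\A)$, so that the local parameters at unramified primes are controlled by the roots of $H_p(T)$. By Theorem~\ref{infinity}, $\pi_\infty$ is the tempered principal series ${\rm Ind}_B^G\,\chi(1,\sgn,\sgn)$, hence the Satake parameters of $\pi_p$ at $p\nmid N$ lie on the unit circle and every root of $H_p(T)$ has absolute value one. Combining this temperedness bound with (Rat) and (Int), the coefficients of $H_p(T)$ are algebraic integers inside the fixed number field $\Q_F$, of bounded archimedean size under all embeddings $\Q_F\hookrightarrow\C$.

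Next, (Gal) produces, for every rational prime $\ell$ outside a finite bad set, a continuous semisimple $\bar\rho_{F,\ell}:G_\Q\to GSp_4(\bFl)$, unramified at $p\nmid N\ell$, with $\det(I_4-\bar\rho_{F,\ell}({\rm Frob}_p)T)\equiv H_p(T)\pmod{\mathfrak{l}}$ for a chosen prime $\mathfrak{l}$ of $\bZ$ above $\ell$. Feeding the previous bound on Frobenius traces into Chebotarev forces the order of $\bar\rho_{F,\ell}(G_\Q)$ to be bounded independently of $\ell$. A Deligne--Serre style pigeonhole argument then selects a single isomorphism class of residual representations, realised by a finite Galois extension $L/\Q$ unramified outside $N$, through which $\bar\rho_{F,\ell}$ factors for a positive-density set of $\ell$; lifting this common residual representation to characteristic zero (possible because it factors through the finite group ${\rm Gal}(L/\Q)$) and fixing an embedding $\bQl\hookrightarrow\C$ produces the desired Artin representation $\rho_F:G_\Q\to GSp_4(\C)$ satisfying $\det(I_4-\rho_F({\rm Frob}_p)T)=H_p(T)$ for all $p\nmid N$.

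Finally, the symplectic oddness $\rho_F(c)\sim {\rm diag}(1,-1,-1,1)$ is read off directly from the archimedean $L$-parameter $\phi_{\pi_\infty}:W_\R\to GSp_4(\C)$ of the principal series ${\rm Ind}_B^G\,\chi(1,\sgn,\sgn)$, whose image of a complex conjugation lies in exactly that conjugacy class. Irreducibility of $\rho_F$ is equivalent to the cuspidality of the transfer $\Pi$ on $GL_4$; since $\pi_\infty$ is tempered (so $\pi_F$ is not CAP), this is in turn equivalent to $\pi_F$ not being endoscopic. The principal obstacle in the scheme is the patching step: ensuring that the reductions $\bar\rho_{F,\ell}$ glue to a single complex representation taking values in $GSp_4(\C)$ (and not merely in $GL_4(\C)$ with a loose similitude), which requires careful bookkeeping---using (Rat) to confine all Frobenius traces to $\Q_F$, (Int) to realise them as algebraic integers of bounded size, and a Brauer--Nesbitt/Chebotarev argument to pin down the isomorphism class---while simultaneously tracking the similitude character $\e$ through the mod-$\ell$ reductions.
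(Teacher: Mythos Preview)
Your outline follows the Deligne--Serre template and the overall architecture matches the paper, but there is a genuine gap at the very first step, and it undermines the whole argument.

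You write that since $\pi_\infty$ is tempered, ``the Satake parameters of $\pi_p$ at $p\nmid N$ lie on the unit circle and every root of $H_p(T)$ has absolute value one.'' This inference is false: temperedness of the archimedean component says nothing about temperedness at finite places. What you are invoking is precisely the Ramanujan conjecture for $\pi_F$, which in this paper is Corollary~\ref{Ram}---a \emph{consequence} of the Main Theorem, not an input to it. So the argument is circular. Without Ramanujan you have no a priori archimedean bound on the coefficients of $H_p(T)$, and the lattice-point pigeonhole (``bounded algebraic integers in $\mathcal{O}_{\Q_F}$ form a finite set'') collapses.

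The paper replaces your missing temperedness input with the Rankin--Selberg machinery of Section~\ref{ran-sel}. Using (TR), one transfers $\pi_F$ to $\Pi$ on $GL_4$ and to $\Pi_5$ on $GL_5$, and then the logarithmic behaviour of $L(s,\Pi\times\tilde\Pi)$ and $L(s,\Pi_5\times\tilde\Pi_5)$ near $s=1$ yields Proposition~\ref{density}: for any $\eta>0$ there is a set $X_\eta$ of primes of upper Dirichlet density $\le\eta$ such that $\{(\lambda(p),\lambda(p^2)):p\notin X_\eta\}$ is finite. This density statement, fed through Chebotarev, is what shows that ${\rm Im}\,\rho_\ell$ satisfies the combinatorial condition $C(\eta,M)$ of Definition~\ref{finite-subgroup}; one then needs the subgroup classification of Section~\ref{subgroups1} (Proposition~\ref{main-prop-finite-group}) to convert $C(\eta,M)$ into a uniform bound $|{\rm Im}\,\rho_\ell|\le A$. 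Your proposal skips both of these substantial ingredients.

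Two smaller points. First, symplectic oddness in the paper is not read off from the archimedean parameter of $\pi_\infty$; it is built into assumption (Gal) at the mod~$\ell$ level and then lifted via Lemma~\ref{odd}. Your route would require local--global compatibility at $\infty$ between $\rho_F$ and $\pi_F$, which has not been established here. Second, the equivalence ``$\rho_F$ reducible $\Leftrightarrow$ $\pi_F$ endoscopic'' is not as immediate as you suggest: the paper must separately rule out the Levi factors $M_B$, $M_P$, $M_Q$ (cases (1)--(3) in Section~9), and the exclusion of $M_B$ and $M_Q$ again uses the Rankin--Selberg density bound~(\ref{rankin}).
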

As a corollary to our main theorem, we see that $\pi_p$ is tempered for all $p$ (Corollary 9.2). 
\begin{remark}\label{weightchange}
Let $F$ be as in Theorem \ref{artin-rep}. Then $F$ cannot be a CAP form (Lemma \ref{CAP}).
We can show that there exists a real analytic Siegel modular form $F'$ of weight $(1,0)$ 
such that $\pi_{F'}$ and $\pi_F$ are nearly equivalent. A difference appears in the L-functions, namely, $L^N(s,\pi_F)=L^N(s,F)=L^N(s-1,F')=L^N(s,\pi_{F'})$. In fact, one can get $F$ from $F'$ by first multiplying $det(Y)$, and then by taking a differential operator, and vice versa. See Remark \ref{(1,0)} and Section \ref{ran-sel} for the details. However, only the form of weight $(2,1)$ gives rise to the Artin representation (Remark \ref{(2,1)}).
\end{remark}
For the proof of the main theorem, we follow the method of \cite{d&s}, in which the main ingredients are: 
\begin{enumerate}
\item the integrality of Hecke eigenvalues (Hecke polynomials) for all but finitely many primes (This is related to the fact that the infinity type is a limit of holomorphic discrete series. This uses algebraic geometric structures.);
\item the Rankin-Selberg L-function $L(s,\pi_f\times\tilde\pi_f)$, where $\pi_f$ is the cuspidal representation attached to a cusp form $f$ of weight one,
\item the classification of all semisimple subgroup of $GL_2(\F_\ell)$ for any odd prime $\ell$,  
\item the existence of mod $\ell$ Galois representations attached to any elliptic modular form of weight one. Here the key idea is to use Eisenstein series congruent to 1 mod $\ell$ to shift up the weight.
\end{enumerate}
The essentially same technique can be applied to the case of Hilbert modular forms.
(See \cite{rog&tunnell}, \cite{ohta} for any Hilbert cusp form with parallel weight one. For partial results on the automorphy of Galois representations, see
\cite{sasaki}, \cite{kas}, \cite{gee-kas}.)

Contrary to the case $GL_2/\Q$, we cannot use the algebro-geometric techniques for our form $F$ since $F$ is non-holomorphic. Therefore we do not have ingredients (1) and (4) in the current situation. For these reasons we make assumptions (Gal), (Rat), and (Int). We will give various examples which satisfy these assumptions. 

For (3) in $GSp_4$ case, we study the classification of certain semisimple subgroups of $GSp_4(\F_\ell)$ for any odd prime $\ell$ in Section \ref{subgroups1}. This part could be 
simplified in terms of the theory of finite groups for classical groups. But there are no suitable references in the literature. In the upcoming paper \cite{KY}, we simplify the proof and generalize it to arbitrary semisimple subgroups of $GL_n(\F_\ell)$.

For (2), we apply the result of Arthur \cite{arthur} on the transfer from 
$GSp_4$ to $GL_4$ to obtain the automorphic representation $\Pi$ of $GL_4(\A)$ so that $L(s,\pi_F)=L(s,\Pi)$. 
The result of Arthur depends on the stabilization of the twisted trace formula for $GSp_4$, which is not done at this moment. 
We emphasize that we only need the transfer from $GSp_4$ to $GL_4$. In the upcoming paper \cite{KY}, we remove this assumption (TR).
Let $\tau$ be an automorphic representation of $GL_6(\A)$ such that $\tau_p\simeq\wedge^2 \Pi_p$ for all $p\ne 2,3$ \cite{Kim}. 
Then $\tau=(\Pi_5\otimes\varepsilon)\boxplus \varepsilon$, where $\varepsilon$ is the central character of $\pi_F$, and $\Pi_5$ is the automorphic representation of $GL_5(\A)$ which is a weak transfer of
$\pi_F$ to $GL_5$ corresponding to the $L$-group homomorphism $GSp_4(\Bbb C)\longrightarrow GL_5(\Bbb C)$, given by the second fundamental weight \cite{Kim1}.
Then we can use the Rankin-Selberg $L$-functions $L(s,\Pi\times\tilde\Pi)$ and $L(s,\Pi_5\times\tilde\Pi_5)$ to prove finiteness of a certain set. This is done in Section 6.

In Section \ref{sym-cube}, we obtain the existence of the real analytic Siegel cusp form of weight $(2,1)$ attached to 
the symmetric cube lift of elliptic eigen cuspform of weight 1. More precisely, let $f$ be an elliptic cusp form of weight 1 which is a Hecke eigenform. Let $\pi_f$ be the cuspidal representation of $GL_2/\Bbb Q$ attached to $f$. Then ${\rm Sym}^3(\pi_f)$ is an automorphic representation of $GL_4/\Bbb Q$ \cite{Kim-Sh}. Then by the result of Jacquet, Piatetski-Shapiro, and Shalika (cf. \cite{AS}, \cite{AS1}), there exists a generic cuspidal representation $\Pi$ of $GSp_4(\A)$, whose transfer to $GL_4(\A)$ is ${\rm Sym}^3(\pi_f)$. We can show that $\Pi_{\infty}$ is equivalent to ${\rm Ind}_B^G\, \chi(1,\sgn,\sgn)$. 
Hence we can find a real analytic Siegel cusp form of weight $(2,1)$ with the eigenvalues $-\frac 5{12}$ and $0$ for the generators $\Delta_1$ and $\Delta_2$ such that $\pi_F\simeq \Pi$. This provides infinitely many examples of Siegel cusp forms of weight $(2,1)$ with integral Hecke polynomials. Note that this is an unconditional result. 
If $\rho_f: G_{\Bbb Q}\longrightarrow GL_2(\Bbb C)$ is the Artin representation associated to $f$ by Deligne-Serre theorem \cite{d&s}, then ${\rm Sym}^3(\rho_f)$ is the Artin representation associated to $F$.
Finally we state the strong Artin conjecture related to the automorphy of Artin representations for $GSp_4$: 
\begin{conjecture}
Let $\rho: G_\Q\lra GSp_4(\C)$ be a symplectically odd Artin representation whose image 
does not factor through, up to conjugacy in $GSp_4(\C)$, the Levi factor of 
any parabolic subgroup of $GSp_4(\C)$. 
Then there exists a real analytic Siegel cusp modular form $F$ of weight $(2,1)$ with the eigenvalues $-\frac 5{12}$ and $0$ for the generators $\Delta_1$ and $\Delta_2$ 
so that $\rho_F\sim \rho$. 
\end{conjecture}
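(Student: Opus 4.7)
The plan is to reverse the construction of Theorem \ref{artin-rep} by invoking the strong Artin conjecture for $GL_n$ ($n\le 4$) together with symplectic descent from $GL_4$ to $GSp_4$. First, compose $\rho$ with the standard embedding $\iota:GSp_4(\C)\hookrightarrow GL_4(\C)$ to obtain a $4$-dimensional Artin representation $\rho'=\iota\circ\rho$, and let $\nu:G_\Q\to\C^\times$ be the similitude character of $\rho$, so that $\rho'\simeq\rho'^{\vee}\otimes\nu$. The hypothesis that the image of $\rho$ does not factor, up to $GSp_4(\C)$-conjugacy, through the Levi of any proper parabolic rules out the Klingen, Siegel, and Borel Eisenstein cases, leaving $\rho'$ either irreducible as a $GL_4$-representation, or of Yoshida-endoscopic type $\rho'\simeq\rho_1\oplus\rho_2$ with $\rho_1\not\simeq\rho_2$ irreducible two-dimensional and $\det\rho_1=\det\rho_2=\nu$.

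In the irreducible case, apply the strong Artin conjecture for $GL_4$ to produce a cuspidal automorphic representation $\Pi$ of $GL_4(\A)$ with $L(s,\Pi)=L(s,\rho')$; by the symplectic self-duality of $\rho'$, the representation $\Pi$ is of symplectic type with central character $\nu^2$, and by Arthur's endoscopic classification (part of assumption (TR)) it descends to a cuspidal $\pi$ on $GSp_4(\A)$ of central character $\nu$ whose transfer to $GL_4$ is $\Pi$. In the Yoshida case, the strong Artin conjecture for $GL_2$ (now a theorem) attaches cuspidal automorphic representations $\pi_1,\pi_2$ of $GL_2(\A)$ to $\rho_1,\rho_2$, and the Yoshida lift produces a cuspidal endoscopic representation $\pi$ of $GSp_4(\A)$ whose transfer to $GL_4$ is $\pi_1\boxplus\pi_2$. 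In either case, the archimedean parameter of $\rho$, pinned down by $\rho(c)\stackrel{GSp_4(\C)}{\sim}\diag(1,-1,-1,1)$, coincides with that of ${\rm Ind}_B^G\chi(1,\sgn,\sgn)$; by the local Langlands correspondence for $GSp_4(\R)$ one concludes $\pi_\infty\simeq {\rm Ind}_B^G\chi(1,\sgn,\sgn)$.

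By Theorem \ref{infinity} such a $\pi_\infty$ is precisely the archimedean component of a vector-valued real analytic Siegel cusp form of weight $(2,1)$ with Casimir eigenvalues $-\frac{5}{12}$ and $0$ for $\Delta_1$ and $\Delta_2$. Using the arithmetic theory developed in Sections \ref{class} and \ref{adelic}, one extracts a Hecke eigenform $F\in S_{(2,1)}(\G(N),-\frac{5}{12},0)$ of level $N$ equal to the conductor of $\rho$ and suitable central character $\e$ with $\pi_F\simeq\pi$. Applying Theorem \ref{artin-rep} to $F$ then produces an Artin representation $\rho_F$ whose Hecke polynomials at the unramified primes agree with those of $\rho$, so that $\rho_F\sim\rho$ by Chebotarev together with semisimplicity and the non-Levi rigidity of $\rho$ inside $GSp_4(\C)$. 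The principal obstacle is the strong Artin conjecture for $GL_4$ itself, presently known only in very limited (essentially solvable) cases; a secondary obstacle is the descent from $GL_4$ to $GSp_4$, currently conditional on the stabilization of the twisted trace formula implicit in (TR).
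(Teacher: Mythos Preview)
The statement you are attempting to prove is labelled as a \emph{Conjecture} in the paper, not a theorem; the paper offers no proof of it. Immediately after stating it, the authors remark that one expects a case-by-case analysis according to the finite subgroups of $PGSp_4(\C)\simeq SO_5(\C)$, and then point to the special cases established in Sections~\ref{sym-cube} and~\ref{solvable} (symmetric cubes of weight-one forms, and K.~Martin's solvable $E_{16}\rtimes C_5$ examples). So there is no ``paper's own proof'' to compare against.

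Your outline is the natural heuristic strategy, and you correctly isolate the essential obstruction: the strong Artin conjecture for $GL_4$ in the irreducible case. That step is genuinely open outside a handful of solvable images, so what you have written is not a proof but a reduction to an unproved conjecture --- which you acknowledge. Two further comments. First, your appeal to Theorem~\ref{artin-rep} at the end is both unnecessary and circular: that theorem requires the hypotheses (Gal), (Rat), (Int) on $F$, which you have not verified, and in any case once you have constructed $F$ with $\pi_F\simeq\pi$ and $L^N(s,\pi)=L^N(s,\rho)$, the Hecke polynomials of $F$ already coincide with $\det(I_4-\rho({\rm Frob}_p)T)$ by construction, so $\rho$ itself serves as the Artin representation attached to $F$. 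Second, in the Yoshida case you should note that the symplectic oddness of $\rho$ forces each $\rho_i$ to be odd, which is what makes the $GL_2$ strong Artin conjecture (now a theorem) applicable; this is implicit in Remark~\ref{endoscopic}.
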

As in $GL_2$ case, we consider $\bar\rho: G_\Q\lra PGSp_4(\C)$, the composition of $\rho$ and the canonical projection. 
Then $PGSp_4(\C)\simeq SO_5(\C)$ and the finite subgroups of $SO_5(\C)$ have been classified (cf. \cite{martin}).
One expects a case by case analysis for different finite subgroups.
In fact, K. Martin \cite{martin} showed the strong Artin conjecture when ${\rm Im}(\bar\rho)$ is a solvable group, $E_{16}\rtimes C_5$, where $E_{16}$ is the
elementary abelian group of order 16 and $C_5$ is the cyclic group of order 5. In Section \ref{solvable}, we show 
that K. Martin's explicit examples
give rise to symplectically odd Artin representations. Hence we obtain examples of real analytic Siegel cusp forms of weight $(2,1)$ 
with integral Hecke polynomials, which do not come from $GL_2$ forms.
\smallskip

\textbf{Acknowledgments.} We would like to thank J. Arthur, K. Gunji, S. Kudla, S. Kuroki, C-P. Mok, T. Moriyama,  T. Okazaki, R. Schmidt, and D. Vogan for helpful discussions. In particular, R. Schmidt \cite{schmidt1} helped us to realize that holomorphic Siegel cusp forms never give rise to Artin representations. We thank the referee for many helpful comments and corrections.

\section{Preliminaries on $GSp_4$}
Let $J=\begin{pmatrix} 0&I_2\\-I_2&0\end{pmatrix}$, and we realize the algebraic group $G:=GSp_4$ over $\Q$ as 
the subgroup of $GL_4$ consisting of all $g$ such that 
${}^tgJg=\nu(g) J$ for some $\nu(g)\in GL_1$. 
Let $\nu:G=GSp_4\lra GL_1$ be the similitude character defined by sending $g$ to $\nu(g)$. 
Let $Sp_4:={\rm Ker}(\nu)$. Let $B$ be the Borel subgroup of $GSp_4$, $T$ the maximal torus, 
and $U$ the unipotent radical of $B$:
\begin{eqnarray*}
T &=& \{ t=t(t_1,t_2,t_0)=\diag(t_1,t_2, t_0 t_1^{-1}, t_0 t_2^{-1}) | \, t_0, t_1, t_2\in GL_1\},\\
U &=& \left\{ \begin{pmatrix} I_2& A\\ 0&I_2\end{pmatrix} \begin{pmatrix} C&0\\0& C'\end{pmatrix} \Bigg| \, A=\begin{pmatrix} a&b\\b&c\end{pmatrix}, 
C=\begin{pmatrix} 1&d\\0&1\end{pmatrix}, C'=\begin{pmatrix} 1&0\\-d&1\end{pmatrix}\right\}
\end{eqnarray*}
The simple roots are $\alpha(t(t_1,t_2,t_0))=t_1t_2^{-1}$ and $\beta(t(t_1,t_2,t_0))=t_2^2t_0^{-1}$. The coroots are $\alpha^{\vee}(x)=t(x,x^{-1},1)$ and $\beta^{\vee}(x)=t(1,x,1)$. Let 
$s_1=\begin{pmatrix}
0 & 1 & 0 & 0\\
1 & 0 & 0 & 0\\
0 & 0 & 0 & 1\\
0 & 0 & 1 & 0
\end{pmatrix},\quad
s_2=\begin{pmatrix}
1 & 0 & 0 & 0\\
0 & 0 & 0 & 1\\
0 & 0 & 1 & 0\\
0 & -1 & 0 & 0
\end{pmatrix}
$ 
be the representatives of generators of the Weyl group $N_G(T)/T$ which 
correspond to $\alpha$ and $\beta$ respectively. 

Note that $Z_G=\{ aI_{4}: a\in GL_1 \}$ and $\nu(aI_{4})=a^2$. For any place $p\le \infty$ of $\Q$, 
a character of $\chi$ of $T(\Q_p)$ is given by
$\chi=\chi(\chi_1,\chi_2,\sigma)$ where $\chi_i,\sigma$ are characters of $\Bbb Q^{\times}_p$ so that $\chi(t(t_1,t_2,t_0))=\chi_1(t_1)\chi_2(t_2)\sigma(t_0)$. Note also that the dual group of $G$ is $\widehat{G}=GSp_4(\Bbb C)$. 

Note the Weyl group action; $s_1 : t(t_1,t_2,t_0)\longmapsto t(t_2,t_1,t_0)$ and $s_2: t(t_1,t_2,t_0)\longmapsto t(t_1,t_2^{-1}t_0,t_0)$. 

Let $P=M_PN_P$ (resp. $Q=M_QN_Q$) be the Siegel (resp. Klingen) parabolic subgroup of $GSp_4$ containing $B$, where
$$
M_P=\left\{\begin{pmatrix} A&0\\0& u {}^t A^{-1}\end{pmatrix}\, : A\in GL_2,\, u\in GL_1\right\}\simeq GL_2\times GL_1,\,
N_P=\left\{ \begin{pmatrix} I_2&S\\0& I_2\end{pmatrix} : \, S=\begin{pmatrix} a&b\\b&c\end{pmatrix} \right\},
$$
$$N_Q=\left\{ \begin{pmatrix} I_2& A\\ 0&I_2\end{pmatrix} \begin{pmatrix} C&0\\0& C'\end{pmatrix} \Bigg| \, A=\begin{pmatrix} a&b\\b&0\end{pmatrix}, 
C=\begin{pmatrix} 1&d\\0&1\end{pmatrix}, C'=\begin{pmatrix} 1&0\\-d&1\end{pmatrix}\right\},
$$
\begin{eqnarray*}
M_Q=\left\{\begin{pmatrix} a'&{}&{}&{}\\{}&1&{}&{}\\{}&{}&u{a'}^{-1}&{}\\{}&{}&{}&u\end{pmatrix} \begin{pmatrix} 1&{}&{}&{}\\{}&a&{}&b\\{}&{}&1&{}\\{}&c&{}&d\end{pmatrix},\, \begin{pmatrix} a&b\\c&d\end{pmatrix}\in SL_2 \right\}.
\end{eqnarray*}

\section{Vector-valued real analytic Siegel modular forms}
In this section, we shall discuss vector-valued real analytic Siegel modular forms in various settings. 
Since there are no references in dealing with vector-valued real analytic Siegel modular forms, we will develop the definition and their basic properties by imitating Section 1 to 3 of \cite{hori}. 
We refer to \cite{borel&jacquet} for the adelic setting. 

\subsection{Classical real analytic Siegel modular forms}\label{class}
Let $\mathcal{H}_2=\{Z\in M_2(\C)|\ {}^tZ=Z,\  {\rm Im}(Z)>0\}$ be the Siegel 
upper half-plane. 
For a pair of non-negative integers $\underline{k}=(k_1,k_2)$, $k_1\ge k_2$ (note that $k_2$ might be negative in real analytic case), we define the 
algebraic representation $\lambda_{\underline{k}}$ of $GL_2$ by 
$$V_{\underline{k}}={\rm Sym}^{k_1-k_2}{\rm St}_2\otimes {\rm det}^{k_2} {\rm St}_2,
$$ 
where ${\rm St}_2$ is the standard representation of dimension 2 with the basis $\{e_1,e_2\}$. More explicitly, if $R$ is any ring, 
then $V_{\underline{k}}(R)=\ds\bigoplus_{i=0}^{k_1-k_2}Re^{k_1-k_2-i}_1\cdot e^i_2$ and for 
$g=\begin{pmatrix}
a & b \\
c & d
\end{pmatrix}
\in GL_2(R)$,  $\lambda_{\underline{k}}(g)$ acts on $V_{\underline{k}}(R)$ by 
$$g\cdot e^{k_1-k_2-i}_1\cdot e^i_2:=\det(g)^{k_2}(ae_1+be_2)^{k_1-k_2-i}\cdot (ce_1+de_2)^i.$$
We identify $V_{\underline{k}}(R)$ with $R^{\oplus (k_1-k_2+1)}$,
and
$\lambda_{\underline{k}}(g)$ with the representation matrix of $\lambda_{\underline{k}}(g)$ 
with respect to the above basis.
We have the action and the automorphy factor $J$ by
\begin{equation}\label{sp4-action}
\gamma Z=(AZ+B)(CZ+D)^{-1}, \quad J(\gamma,Z)=CZ+D\in GL_2(\C),
\end{equation}
for
$\gamma=\begin{pmatrix}
A& B\\
C& D
\end{pmatrix}
\in Sp_4(\R)$ and $Z\in \mathcal{H}_2$.

For an integer $N\ge 1$, we define a principal congruence subgroup $\Gamma(N)$ to be 
the group consisting of the elements $g\in Sp_4(\Z)$ such that $g\equiv 1 \ {\rm mod}\ N$. 
For a parabolic subgroup $R\in \{B,P,Q\}$, let $\G_R(N)$ be the group consisting of the elements $g\in Sp_4(\Z)$ such that 
$g\ ({\rm mod}\ N) \in 
R(\Z/N\Z)$. 
For a $V_{\underline{k}}(\C)$-valued function $f$ on $\mathcal{H}_2$, the action of $\gamma \in G(\R)^+$ is defined by 
\begin{equation}\label{transformation}
F(Z)|[\gamma]_{\underline{k}}:=\lambda_{\underline{k}}(\nu(\gamma)J(\gamma,z)^{-1})F(\gamma Z).
\end{equation} 
The algebra of all $Sp_4(\R)$-invariant differential operators on $\mathcal{H}_2$ is 
isomorphic to $\C[\Delta_1,\Delta_2]$, the commutative polynomial ring of two variables (\cite{hori}), where
$\Delta_1$ is the degree 2 Casimir element, and $\Delta_2$ is the degree 4 element. (see Section 5 for the details.)

For an arithmetic subgroup $\Gamma$ of $Sp_4(\Q)$ and a finite order character 
$\chi:\Gamma\lra \C^\times$,  we say that a function $F:\mathcal{H}_2\lra V_{\underline{k}}(\C)$ is 
a real analytic Siegel modular form of weight $(k_1,k_2)$ with the character $\chi$ with respect to $\G$ if it satisfies

(i) $F$ is a $C^\infty$-function, 

(ii)  $F|[\gamma]_{\underline{k}}=\chi(\gamma)f$ for any $\gamma\in\Gamma$,  

(iii) $F$ is a common eigenform for $\Delta_1$ and $\Delta_2$, namely, $\Delta_i F=c_i F$ for some constants $c_i,i=1,2$, 

(iv) $F$ satisfies the growth condition, i.e.,
there exist a positive real number $C$ and $n\in \mathbb{N}$ such that 
for any linear functional $l:V_{\underline{k}}(\C)\lra \C$, 
$$|l(F(Z))|\le C(\sup\{{\rm tr}({\rm Im}Z),{\rm tr}({\rm Im}Z)^{-1} \})^n.$$
We denote by $M_{\underline{k}}(\Gamma,\chi,c_1,c_2)$ the space of such forms. 
By Harish-Chandra (see Theorem 1.7 of \cite{borel&jacquet}.) this space is finite dimensional.

For each parabolic subgroup $R$ of $Sp_4$ defined over $\Q$, we denote by $N_R$ the unipotent radical of $R$. 
Then we say $F\in M_{\underline{k}}(\Gamma,\chi,c_1,c_2)$ is a cusp form if 
$$\int_{(N_R(\Q)\cap \G)\backslash N_R(\R)} F(nZ)dn=0,\: \mbox{ for any parabolic subgroup $R$ defined over $\Q$}.$$ 
We denote by  $S_{\underline{k}}(\Gamma,\chi,c_1,c_2)$  the space of such cusp forms inside $M_{\underline{k}}(\Gamma,\chi,c_1,c_2)$. 

Similar to the holomorphic case (cf. \cite{evdokimov}), 
we shall define the Hecke operators on $M_{\underline{k}}(\Gamma(N),c_1,c_2)$:     
For any positive integer $n$ coprime to $N$, let 
$$\Delta_n(N):=\Bigg\{g\in M_4(\Z)\cap GSp_4(\Q)\ \Bigg|\ 
g\equiv \begin{pmatrix}
I_2 & 0   \\
0 & \nu(g)I_2 
\end{pmatrix}\, {\rm mod}\ N ,\ \nu(g)^{\pm1}\in\Z[\frac{1}{n}] \Bigg\}.
$$
For $m\in \Delta_n(N)$, we introduce the action of the Hecke operators on $M_{\underline{k}}(\Gamma(N),c_1,c_2)$ 
 by 
\begin{equation}\label{hecke-ope}
T_m F(Z):=\nu(m)^{\frac{k_1+k_2}{2}-3}\ds\sum_{\alpha\in \Gamma(N)\backslash\Gamma(N) m
\Gamma(N)} F(Z)|[(\nu(m)^{-\frac{1}{2}}\alpha]_{\underline{k}},
\end{equation}
and for any positive integer $n$, put  
$$T(n):=\sum_{m\in \G(N)\backslash \Delta_n(N)}T_m.
$$

We also consider the same actions on $S_{\underline{k}}(\G(N),c_1,c_2)$. 
For $t_1={\rm diag}(1,1,p,p),\ t_2={\rm diag}(1,p,p,p^2)$ for a prime $p$, put $T_{i,p}:=T_{t_i}\ i=1,2$ and 
fix $\widetilde{S}_{p,1}, \widetilde{S}_{p,p}\in Sp_4(\Z)$ so that 
$\widetilde{S}_{p,1}\equiv {\rm diag}(p^{-1},1,1,p)\ {\rm mod}\ N$ and $\widetilde{S}_{p,p}\equiv {\rm diag}(p^{-1},p^{-1},p,p)\ {\rm mod}\ N$. 
Then we see that 
\begin{equation}
T(p)=T_{1,p},\quad T^2_{1,p}-T(p^2)-p^2 \widetilde{S}_{p,p}=p\{T_{2,p}+(1+p^2)\widetilde{S}_{p,p}\}.
\end{equation}

The group $\G(N)$ contains the subgroup consisting of 
$\begin{pmatrix}
I_2& NS \\
0 & I_2
\end{pmatrix}$, 
$S={}^tS\in M_2(\Z)$. Hence for a given $F\in M_{\underline{k}}(\G(N),c_1,c_2)$, we have the Fourier expansion 
\begin{equation}\label{fourier}
F(Z)=\ds\sum_{T\in P(\Z)_{\ge 0}}A_F(T,Y)e^{\frac{2\pi\sqrt{-1}}{N}{\rm tr}(TX)}, \quad Z=X+Y \sqrt{-1}\in \mathcal H_2,
\end{equation}
 where $P(\Z)_{\ge 0}$ is the subset of $M_2(\Q)$ consisting of all symmetric matrices 
$\begin{pmatrix}
a& \frac{b}{2} \\
\frac{b}{2} & c
\end{pmatrix}$,
$a,b,c\in \Z$, which are semi-positive definite. 

\subsection{Hecke operators}\label{integrality}

The finite group $Sp_4(\Z/N\Z)\simeq Sp_4(\Z)/\G(N)$ acts on  
$M_{\underline{k}}(\G(N),c_1,c_2)$ by $F\mapsto F|[\tilde{\gamma}]_{\underline{k}}$ if 
we fix a lift $\tilde{\gamma}$ of $\gamma\in Sp_4(\Z/N\Z)$. 
We denote this action by the same notation $F|[\gamma]_{\underline{k}}$. 
This action does not depend on the choice of lifts of $\gamma$. 
The diagonal subgroup of $Sp_4(\Z/N\Z)$ is isomorphic to $(\Z/N\Z)^\times\times (\Z/N\Z)^\times$ by sending 
$S_{a,b}:={\rm diag}(a^{-1},b^{-1},a,b)$ to 
$(a,b)$ and it also acts on $M_{\underline{k}}(\G(N))$, factoring through the action of $Sp_4(\Z/N\Z)$. Then we have the character decomposition   
\begin{equation}\label{character}
M_{\underline{k}}(\G(N),c_1,c_2)=\bigoplus_{\chi_1,\chi_2:(\Z/N\Z)^\times\lra\C^\times}M_{\underline{k}}(\G(N),c_1,c_2, \chi_1,\chi_2),
\end{equation}
where  
$$M_{\underline{k}}(\G(N),c_1,c_2, \chi_1,\chi_2)=\{F\in M_{\underline{k}}(\G(N),c_1,c_2)\ |\ F|[S_{a,1}]_{\underline{k}}=\chi_1(a)F \ {\rm and}\  F|[S_{a,a}]_{\underline{k}}=\chi_2(a)F \}.
$$
It is easy to see that the Hecke operators preserve $M_{\underline{k}}(\G(N),c_1,c_2, \chi_1,\chi_2)$ (cf. \cite{manni&top} for the 
holomorphic case). 
We should remark that in order that $M_{\underline{k}}(\G(N),c_1,c_2, \chi_1,\chi_2)\ne 0$,
 the weight $(k_1,k_2)$ has to satisfy the parity condition 
\begin{equation}\label{parity}
\chi_2(-1)=(-1)^{k_1+k_2}. 
\end{equation}
Throughout this paper, we assume this parity condition on $F$. Let 
$$F(Z)=\ds\sum_{T\in P(\Z)_{\ge 0}}A_F(T,Y)e^{\frac{2\pi\sqrt{-1}}{N}{\rm tr}(TX)}\in 
M_{\underline{k}}(\G(N),c_1,c_2, \chi_1,\chi_2),\quad Z=X+Y \sqrt{-1},
$$ 
be an 
eigenform for all $T(p^i),\ p\nmid N,\ i\in\N$ with eigenvalues $\lambda(p^i)$, i.e.,
$$
T(p^i)F=\lambda(p^i)F.
$$
We next study the relation between $\lambda(p^i)$ and $A_F(T,Y)$. 
For a non-negative integer $\beta$, let $R(p^\beta)$ be the set of matrices 
$\begin{pmatrix}
u_1& u_2 \\
u_3 & u_4
\end{pmatrix}$ of $\G^{1}(N):=\{g\in SL_2(\Z)\ |\ g\equiv I_2\ {\rm mod}\ N \}$ whose first rows $(u_1,u_2)$ run 
over a complete set of representatives modulo the equivalence relation: 
$$(u_1,u_2)\sim (u'_1,u'_2) \Longleftrightarrow [u_1:u_2]=[u'_1:u'_2]\ {\rm in}\ \mathbb{P}^1(\Z/p^\beta\Z).
$$
Let $T(p^i)F=\ds\sum_{T\in P(\Z)_{\ge 0}}A_F(p^i;T,Y)e^{\frac{2\pi\sqrt{-1}}{N}{\rm tr}(TX)}$. For simplicity, we write 
$\rho_j={\rm Sym}^{j}{\rm St}_2$ for $j\ge 0$ and 
$UT{}^tU=\begin{pmatrix}
a_U& \frac{b_U}{2} \\
\frac{b_U}{2}  & c_U
\end{pmatrix}$ for $T\in P(\Z)_{\ge 0}$ and $U\in R(p^\beta)$. Put $D_\beta=
\begin{pmatrix}
1& 0 \\
0  & p^\beta
\end{pmatrix}$.
By Proposition 3.1 of \cite{evdokimov}, and the calculations done at p.439-440 of \cite{evdokimov}, and Section 3.1 of \cite{hori},  we have 
\begin{eqnarray*}
\label{hecke-fourier} 
&{}& \lambda(p^i)A_F(T,Y)=A_F(p^i;T,Y):=\sum_{\alpha+\beta+\gamma=i\atop \alpha,\beta,\gamma\ge 0}
\chi_1(p^\beta)\chi_2(p^\gamma)p^{\beta(k_2-2)+\gamma(k_1+k_2-3)}\times \\
&{}& \sum_{U\in R(p^\beta)\atop {a_U\equiv 0\ {\rm mod}\ p^{\beta+\gamma}\atop 
b_U\equiv c_U\equiv 0\ {\rm mod}\ p^{\gamma}}}
\rho_{k_1-k_2}(\begin{pmatrix}
1& 0 \\
0  & p^\beta
\end{pmatrix} U)^{-1}
A_F\left(p^\alpha \begin{pmatrix}
a_Up^{-\beta-\gamma}& \frac{b_Up^{-\gamma}}{2} \\
\frac{b_Up^{-\gamma}}{2}  & c_Up^{\beta-\gamma}
\end{pmatrix}, 
p^{\delta-2\alpha}D^{-1}_\beta {}^tU^{-1} YU^{-1}D^{-1}_\beta
\right). \nonumber
\end{eqnarray*}
\subsection{Adelic forms}\label{adelic}
Let $\A$ be the adele ring of $\Q$ and $\A_f=\widehat{\Z}\otimes_\Z\Q$ the finite adele of $\Q$.  
For a positive integer $N$ and a parabolic subgroup $R\in \{B,P,Q\}$, let $K_R(N)$ be the group consisting of the elements 
$g\in GSp_4(\widehat{\Z})$ such that $(g\ {\rm mod}\ N)\in 
R(\Z/N\Z)$. It is easy to see that $K_R(N)\cap Sp_4(\Q)=\G_R(N)$ and $\nu(K_R(N))=\widehat{\Z}$. 

Let $K(N)$ be the group consisting of the elements 
$g\in GSp_4(\widehat{\Z})$ such that $g\equiv I_4\ {\rm mod}\ N$. Then we see that  $\G(N)=Sp_4(\Q)\cap K(N)$ and $\nu(K(N))=1+N\widehat{\Z}$. 
Then it follows from the strong approximation theorem for $Sp_4$ that 
\begin{eqnarray}
 G(\A) &=& G(\Q)G(\R)^+K_R(N)=G(\Q)Z_G(\R)^+Sp_4(\R)K_R(N) \label{sat1} \\
 G(\A) &=& \coprod_{1\le a < N \atop (a,N)=1}G(\Q)G(\R)^+ d_a K(N)=\coprod_{1\le a < N \atop (a,N)=1}G(\Q)Z_G(\R)^+Sp_4(\R)d_a K(N) \label{sat2},
\end{eqnarray}
where $d_a$ is the diagonal matrix such that $(d_a)_p={\rm diag}(a,a,1,1)$ if $p|N$, $(d_a)_p=I_4$ otherwise. 

Let $I:=I_2\sqrt{-1}\in\mathcal{H}_2$ and $U(2)={\rm Stab}_{Sp_4(\R)}(I)$. 
Let $\mathfrak{g}_{0,\C}$ be the complexification of $\mathfrak{g}_{0}={\rm Lie}Sp_4(\R)$. 
We denote by $\mathcal{Z}$ the center of universal enveloping algebra of $\mathfrak{g}_{0,\C}$. 
Under the natural map $Sp_4(\R)\lra \mathcal{H}_2$, 
$g\mapsto g(I)$, $\mathcal{Z}\simeq \C[\Delta_1,\Delta_2]$. 
Choose $\widetilde{\Delta}_i\in \mathcal{Z}$ as in (\ref{generators}) in Section \ref{inf} and fix $\Delta_i$ which 
corresponds to $\widetilde{\Delta}_i$ under this map for $i=1,2$. 
For a function $\phi:GSp_4(\Q)\backslash GSp_4(\A)\lra V_{\underline{k}}(\C)$ and $D\in \mathfrak{g}_0$, 
we first define 
$$D \phi(g):=\lim_{t\to 0}\frac{d}{dt}\phi(g\exp(tD)),
$$
and extend this action linearly on $\mathfrak{g}_{0,\C}$.  
For any open compact subgroup $U$ of $GSp_4(\widehat{\Z})$ and complex numbers $c_1,c_2$, 
we let $\mathcal{A}_{\underline{k}}(U,c_1,c_2)^\circ$ denote the subspace of functions 
$\phi:GSp_4(\Q)\backslash GSp_4(\A)\lra V_{\underline{k}}(\C)$ such that 

\begin{enumerate}
\item $\phi(gu u_\infty)=\lambda_{\underline{k}}(J(u_\infty,I)^{-1})\phi(g)$ for all $g\in G(\A)$, $u\in U$, and $u_\infty\in U(2)$, 
\item for $h\in G(\A)$, the function 
$$\phi_h:\mathcal{H}_2\lra V_{\underline{k}}(\C),\ \phi_h(Z)=\phi_h(g_\infty I):=\lambda_{\underline{k}}(J(g_\infty,I))\phi(hg_\infty)$$
is a $C^\infty$ function where $Z=g_\infty I,\ g_\infty\in Sp_4(\R)$ (note that this definition is independent of the choice of $g_\infty$),
\item $\widetilde{\Delta}_i\phi=c_i \phi$ for $i=1,2$, 
\item for $g\in G(\A)$, $\ds\int_{N_R(\Q)\backslash N_R(\A)}\phi(ng)dn=0$ for any parabolic $\Q$-subgroup $R$ and $dn$ 
is the Haar measure on $N_R(\Q)\backslash N_R(\A)$. 
\end{enumerate}

We define similarly $\mathcal{A}_{\underline{k}}(U,c_1,c_2)$ by omitting the last condition (4). 

Let $\G(N)_a:=Sp_4(\Q)\cap d_a^{-1}K(N)d_a$. Note that $\G(N)_a=\G(N)$ for each $a$. 
Then we have the isomorphism 
\begin{equation}\label{isom}
\mathcal{A}_{\underline{k}}(K(N),c_1,c_2)\stackrel{\sim}{\lra} \bigoplus_{1\le a < N\atop (a,N)=1}
M_{\underline{k}}(\G(N)_a,c_1,c_2), \quad \phi\mapsto (\phi_{d_a}).
\end{equation}
The inverse of this isomorphism is given as follows: Let $F=(F_a)$ be an element of RHS which is a system of real analytic Siegel modular forms such that $F_a\in M_{\underline{k}}(\Gamma(N)_a,c_1,c_2)$ for each $a$. For each $g\in G(\A)$, there exists a unique $d_a$ such that
$g=rz_\infty d_a g_\infty k$ with $r\in G(\Q)$, $z_\infty\in Z_G(\R)^+$, $g_\infty\in Sp_4(\R)$, and $k\in K(N)$. Then we define the function  
$$\phi_{F}(g)=\lambda_{\underline{k}}(J(g_\infty,I))^{-1}F_a(g_\infty I).$$
This gives rise to the inverse of the above isomorphism. 
We also have the isomorphism 
$$\mathcal{A}_{\underline{k}}(K(N),c_1,c_2)^\circ\simeq \bigoplus_{1\le a < N\atop (a,N)=1}S_{\underline{k}}(\G(N)_a,c_1,c_2),
$$ 
as well (cf. \cite{borel&jacquet} for checking the cuspidality). 

Now we restrict the isomorphism (\ref{isom}) to a subspace, using the character decomposition (\ref{character}). 
Given two Dirichlet characters $\chi_i : (\Z/N\Z)^\times \lra \C^\times$, $i = 1, 2$, associate 
the characters $\chi'_i:\A_f^{\times}\lra \C^\times$ by the natural map $\A_f^\times\lra \widehat
\A_f^\times/\Q_{>0}={\widehat{\Bbb Z}}^\times\lra (\Z/N\Z)^\times$. 

Define
$\widetilde{\chi} : T(\A_f) \lra \C^\times$ by 
$$\tilde\chi'(\diag(*,*, c, d) =\chi'_1(d^{-1}c)\chi'_2(d).$$
Choose $F = (F_a)$ from RHS of (\ref{isom}) which satisfies 
$F|[S_{z,z}]_{\underline{k}} =(F_a|[S_{z,z}]_{\underline{k}}) = (\chi_2(z)F_a) = \chi_2(z)F$ and
$F|[S_{z,1}]_{\underline{k}} = \chi_1(z)F$. If we write $g\in G(\A)$ as $g = rz_\infty d_a g_\infty k \in G(\A)$ and take $z_f \in T(\A_f )$, then define the automorphic function attached to $F$ by
$$
\phi_F(g z_f) = \lambda_{\underline{k}}(J(g, I))^{-1} F_a(g_\infty I)\tilde\chi(z_f).
$$
Then this gives rise to the isomorphism of the subspaces 
\begin{equation*}
\mathcal{A}_{\underline{k}}(K(N),c_1,c_2, \tilde\chi)\stackrel{\sim}{\lra} \bigoplus_{1\le a < N\atop (a,N)=1} 
M_{\underline{k}}(\G(N)_a,c_1,c_2, \chi_1,\chi_2).
\end{equation*}

We now compute the actions of $\diag(1,1,-1,-1)\in GSp_4(\R)$ on $\phi_F\in \mathcal{A}_{\underline{k}}(K(N),c_1,c_2, \tilde\chi)$ as follows:
Let $h=(\diag(1, 1,-1,-1), I_{\A_f})=\diag(1,1,-1,-1)(I_{GSp_4(\R)}, (\diag(1,1,-1,-1))_p)\in G(\Q)(GSp_4(\R)\times GSp_4(\A_f))$,
where $I_{\A_f}$ (resp. $I_{GSp_4(\R)}$) is the identity element.
Then we have
\begin{eqnarray*}
\phi_F(gh) =\phi_F((r\cdot\diag(1, 1,-1,-1))z_\infty d_a g_\infty k \cdot \diag(1,1,-1,-1))_p)) \\
= \lambda_{\underline{k}}(J(g, I))^{-1} F_a(g_\infty I)\tilde\chi(\diag(1,1,-1,-1))_p) =\chi_2(-1) \phi_F(g),
\end{eqnarray*}
since we have assumed $\chi_2(-1) = (-1)^{k_1+k_2}$. Hence in the case of $(k_1,k_2)=(2,1)$, we have
\begin{equation}\label{sgn}
\phi_F(\diag(1, 1,-1,-1), I_{\A_f})=-1.
\end{equation}

\begin{remark}
Note that $M_{\underline{k}}(\Gamma(N),c_1,c_2)$ is embedded diagonally into 
$\ds\bigoplus_{1\le a < N\atop (a,N)=1} M_{\underline{k}}(\G(N)_a,c_1,c_2)$. 
So given a cusp form $F\in M_{\underline{k}}(\Gamma(N),c_1,c_2)$, we obtain $\phi_F\in \mathcal{A}_{\underline{k}}(K(N),c_1,c_2)$ which under the isomorphism (\ref{isom}), corresponds to $(F,...,F)$, and $\phi_F$ gives rise to a cuspidal representation $\pi_F$.
Conversely, given a cuspidal representation $\pi$ of $GSp_4/\Q$, there exists $N>0$ and $\phi\in \mathcal{A}_{\underline{k}}(K(N),c_1,c_2)$
which spans $\pi$. Under the isomorphism (\ref{isom}), $\phi$ corresponds to $(F_a)_{1\le a < N\atop (a,N)=1}$. 
For any $a$, let $\pi_{F_a}$ be the cuspidal representation associated to $F_a$. Then $\pi$ and $\pi_{F_a}$ have the same Hecke eigenvalues for $p\nmid N$, and hence they are in the same $L$-packet.
\end{remark}

We now study the Hecke operators on $\mathcal{A}_{\underline{k}}(K(N),c_1,c_2)$ and its relation to classical Hecke operators. 
Let $\phi$ be an element of  $\mathcal{A}_{\underline{k}}(K(N),c_1,c_2)$ and 
$F=(F_a)_a$ be the corresponding element of RHS via the above isomorphism (\ref{isom}). 
For any prime $p\nmid N$ and  $\alpha \in G(\Q)\cap T(\Q_p)$, define the Hecke action with respect to $\alpha$ 
$$\widetilde{T}_\alpha \phi(g):= \ds\int_{G(\A_f)}([K(N)_p\alpha K(N)_p]\otimes 1_{K(N)^p})\phi(g g_f) dg_f,
$$
where $dg_f$ is the Haar measure on $G(\A_f)$ so that vol$(K)=1$. Here $K(N)_p$ is the $p$-component of $K(N)$ and  
 $K(N)^p$ is the subgroup of $K(N)$ consisting of trivial $p$-component. 
 
Then by using (\ref{sat2}), we can easily see that 
 \begin{equation}\label{local-global}
T_\alpha F(Z)=\nu(\alpha)^{\frac{k_1+k_2}{2}-3}\widetilde{T}_{\alpha^{-1}} \phi(g),
\end{equation}
where $g=rz_\infty g_a g_\infty k$ as above and $Z=g_\infty I$ (cf. Section 8 of \cite{m&y}). 
From this relation, up to the factor of $\nu(\alpha)^{\frac{k_1+k_2}{2}-3}$, the isomorphism (\ref{isom}) preserves Hecke eigenforms in both sides. 

Conversely, let $F\in S_{\underline{k}}(\G(N),c_1,c_2)$ 
be a Siegel cusp form which is a Hecke eigenform. 
Then it is easy to see that $(F)_a$ is an eigenform in $\ds\bigoplus_{1\le a < N\atop (a,N)=1}S_{\underline{k}}(\G(N)_a,c_1,c_2)$. Hence we have the 
Hecke eigenform in $\mathcal{A}_{\underline{k}}(K(N),c_1,c_2)$ corresponding to $F$. 

For an open compact subgroup $U\subset G(\widehat{\Z})$, we say $U$ is of level $N$ if $N$ is the minimum positive integer so that 
$U$ contains $K(N)$. For such $U$ of level $N$, consider $\mathcal{A}_{\underline{k}}(U,c_1,c_2)$.
It is easy to generalize the above discussion to $\mathcal{A}_{\underline{k}}(U,c_1,c_2)$. We omit the details. 

The group $G(\A)$ acts on  $\ds\varinjlim_{U}\mathcal{A}_{\underline{k}}(U,c_1,c_2)$  
(also on  $\ds\varinjlim_{U}\mathcal{A}_{\underline{k}}(U,c_1,c_2)^\circ$) by 
right translation: 
$$(h\cdot \phi)(g):=\phi(gh), \ {\rm for}\ g,h\in G(\A).
$$
One would like to have scalar-valued functions lying in the usual $L^2(G(\Q)\backslash G(\A))$ space. Let
 $l:V_{\underline{k}}(\C)\lra \C$ be any linear functional. Define $\tilde\phi(g)=l(\phi(g))$. Since we consider all right translates of $\tilde\phi$, the choice of $l$ is irrelevant. For $\phi\in \mathcal{A}_{\underline{k}}(U,c_1,c_2)$ which is an eigenform for all $T_\alpha,\ \alpha\in G(\Q)\cap T(\Q_p)$ and $p\nmid N$, 
we denote by $\pi_\phi$, an irreducible direct summand of the representation of $G(\A)$ 
generated by $g\cdot \tilde\phi$ for $g\in G(\A)$ in $L^2(G(\Q)\backslash G(\A))$. Then $\pi_\phi$ is an automorphic representation in the sense of \cite{borel&jacquet}. 
Note that if the multiplicity one holds, $\pi_{\phi}$ is the irreducible representation generated by $g\cdot \tilde\phi$.
Further if $\phi \in 
\mathcal{A}_{\underline{k}}(U,c_1,c_2)^0$, then we see that $\pi_\phi$ is a cuspidal automorphic representation.   

Finally we remark on some compatibility related to the compact subgroups $K_\ast(N),\ \ast\in \{B,P,Q\}$. 
The obvious inclusions $M_{\underline{k}}(\G_\ast(N),c_1,c_2)\subset M_{\underline{k}}(\G_B(N),c_1,c_2) 
\subset M_{\underline{k}}(\G(N),c_1,c_2),\ \ast\in\{P,Q\}$ preserve the Hecke actions outside $N$. 
By (\ref{sat1}), one has $\mathcal{A}_{\underline{k}}(K_\ast(N),c_1,c_2)\simeq M_{\underline{k}}(\G_\ast(N),c_1,c_2)$. 
Then we have the following commutative diagram which preserves the Hecke actions outside $N$: 
$$
\begin{CD}
\mathcal{A}_{\underline{k}}(K_\ast(N),c_1,c_2) @> >> M_{\underline{k}}(\G_\ast(N),c_1,c_2)\\
@VVV @VVV \\
\mathcal{A}_{\underline{k}}(K(N),c_1,c_2) @> >> \bigoplus_{1\le a < N\atop (a,N)=1}M_{\underline{k}}(\G(N)_a,c_1,c_2).
\end{CD}
$$
Here the left vertical arrow is the natural inclusion and the right vertical arrow is the diagonal embedding 
(recall that $\G(N)_a=\G(N)$). 

\subsection{Conjectural existence of the rationality}\label{rationality}\label{rat}
In this section, let $\underline{k}=(2,1)$, $c_1=-\frac 5{12}$, $c_2=0$, and 
discuss a conjecture on the existence of some rational structure on $S_{(2,1)}(\G(N),-\frac 5{12},0)$.

Let $\mathbb{T}_\Q^{\text{univ}}$ be the Hecke algebra over $\Q$ which is generated by $T_{1,p}, T_{2,p}, \widetilde{S}_{p,1}$, and $\widetilde{S}_{p,p}$ for $p\nmid N$. 
We denote by $\mathbb{T}_\Q$, its image in ${\rm End}_\C(S_{(2,1)}(\G(N),-\frac 5{12},0))$. For any $\Q$-algebra $R$, put $\mathbb{T}_R=
\mathbb{T}_\Q\otimes_\Q R$. 

Let $F_1,\ldots,F_r$ be an orthonormal basis of $S_{(2,1)}(\G(N),-\frac 5{12},0)$ which consists of Hecke eigenforms. 

For $T\in T_\Q$ and $F_i$ for each $i=1,...,r$, denote by $a_T(F_i)$, the eigenvalue of $T$ for $F_i$. 
For $F=\ds\sum_{i=1}^r x_i F_i,\ x_i\in \C$, we define the map 
$$\psi: S_{(2,1)}(\G(N),-\frac 5{12},0)\lra {\rm Hom}(\mathbb{T}_\Q,\C),\quad
F\mapsto [T\mapsto \sum_{i=1}^r x_i a_T(F_i)]
$$ 
which depends on the choice of the basis $F_1,\ldots,F_r$.

We make the following rationality assumption for $F$:
\[
\langle TF\ |\ T\in \mathbb{T}_\Q \rangle_\C=\langle TF\ |\ T\in \mathbb{T}_\Q \rangle_\C\cap \C\otimes_\Q \psi^{-1}({\rm Hom}(\mathbb{T}_\Q,\Q)). \tag{\rm Rat}
\]

We denote by $\mathbb{T}_{\Q,F}$, the image of $\mathbb{T}_\Q$ in End$_\C(\langle TF\ |\ T\in \mathbb{T}_\Q \rangle_\C)$.

\begin{prop}\label{conj}Let $F\in  S_{(2,1)}(\G(N),-\frac 5{12},0)$. 
Assume (Rat) for $F$. Then the Hecke field $\Q_F$ is a finite 
extension over $\Q$. Furthermore, for any $\tau:\Q_F\hookrightarrow \C$, there exists ${}^\tau F\in  S_{(2,1)}(\G(N),-\frac 5{12},0)$ 
such that $T({}^\tau F)=\tau(a_T(F))({}^\tau F)$ for any $T\in \mathbb{T}_\Q$. 
\end{prop}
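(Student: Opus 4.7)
The plan is to use (Rat) to equip $V_F := \langle TF \mid T \in \mathbb{T}_\Q\rangle_\C$ with a $\Q$-rational structure $V_F^\Q$ stable under the Hecke action, which forces $\mathbb{T}_{\Q,F}$ to be a finite-dimensional semisimple commutative $\Q$-algebra. The Hecke field $\Q_F$ will then emerge as the quotient of $\mathbb{T}_{\Q,F}$ by a maximal ideal, so it is automatically a number field, while the other $\Q$-algebra homomorphisms $\mathbb{T}_{\Q,F} \to \C$, parameterized by embeddings $\tau$ of $\Q_F$, correspond through the eigenspace decomposition of $V_F$ to the sought-after companion forms ${}^\tau F$.

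Concretely, I would set $V_F^\Q := V_F \cap \psi^{-1}(\mathrm{Hom}(\mathbb{T}_\Q, \Q))$ and read (Rat) as saying $V_F = V_F^\Q \otimes_\Q \C$. A direct check shows $V_F^\Q$ is $\mathbb{T}_\Q$-stable: for $T \in \mathbb{T}_\Q$ and $G = \sum_i x_i F_i \in V_F^\Q$, the identity
\[
\psi(TG)(T') = \sum_i x_i a_T(F_i)\, a_{T'}(F_i) = \sum_i x_i a_{TT'}(F_i) = \psi(G)(TT'),
\]
which uses that each $F_i$ is a Hecke eigenform and that $\mathbb{T}_\Q$ is commutative, shows $\psi(TG)$ still takes values in $\Q$. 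Hence the map $\mathbb{T}_\Q \to \mathrm{End}_\C(V_F)$ factors through $\mathrm{End}_\Q(V_F^\Q)$, making $\mathbb{T}_{\Q,F}$ finite-dimensional over $\Q$. Since the eigenform basis $F_1, \ldots, F_r$ simultaneously diagonalizes $\mathbb{T}_\C = \mathbb{T}_\Q \otimes_\Q \C$, the induced action on the $\mathbb{T}_\C$-submodule $V_F$ remains diagonalizable; thus $\mathbb{T}_{\Q,F} \otimes_\Q \C$ embeds as a product of copies of $\C$ in $\mathrm{End}_\C(V_F)$, and by faithfully flat descent $\mathbb{T}_{\Q,F}$ itself is reduced. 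A reduced, commutative, finite-dimensional $\Q$-algebra decomposes as a product of number fields $\mathbb{T}_{\Q,F} \simeq \prod_j K_j$.

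Finally, the Hecke eigenvalue map $\chi_F: T \mapsto a_T(F)$ factors through $\mathbb{T}_{\Q,F}$ as a $\Q$-algebra homomorphism $\chi_F: \mathbb{T}_{\Q,F} \to \C$ whose image is $\Q_F$; thus $\Q_F \simeq \mathbb{T}_{\Q,F}/\ker \chi_F$ is a quotient by a maximal ideal and is one of the factors $K_j$, which gives the finiteness of $\Q_F/\Q$. For an embedding $\tau: \Q_F \hookrightarrow \C$, the composition $\chi_\tau := \tau \circ \chi_F : \mathbb{T}_{\Q,F} \to \C$ extends $\C$-linearly to a projection onto one of the $\C$-factors of $\mathbb{T}_{\Q,F} \otimes_\Q \C$; this factor corresponds to an eigenvalue system occurring in the $\mathbb{T}_\C$-decomposition of $V_F$, and the associated joint eigenform ${}^\tau F \in V_F \subset S_{(2,1)}(\G(N), -\frac{5}{12}, 0)$ satisfies $T({}^\tau F) = \tau(a_T(F))\, {}^\tau F$ for all $T \in \mathbb{T}_\Q$, as required. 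The main obstacle is the first step --- extracting from (Rat) a Hecke-stable $\Q$-form $V_F^\Q$ of $V_F$ of the correct $\Q$-dimension; once this is in place the Hecke-stability is formal via the displayed identity, and the rest reduces to standard commutative algebra over $\Q$.
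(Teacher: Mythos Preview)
Your proposal is correct and follows essentially the same route as the paper's proof: both extract from (Rat) a Hecke-stable $\Q$-form of $V_F$, embed $\mathbb{T}_{\Q,F}$ into $\mathrm{End}_\Q$ of that $\Q$-form to get finiteness of $\Q_F$, and then produce the companions ${}^\tau F$. The only cosmetic difference is that the paper constructs ${}^\tau F$ explicitly by writing $F=\sum_i x_i G_i$ in a rational basis $G_1,\ldots,G_r$ and setting ${}^\tau F=\sum_i \tau(x_i) G_i$, whereas you realize ${}^\tau F$ more structurally as an eigenvector for the character $\tau\circ\chi_F$ of $\mathbb{T}_{\Q,F}\otimes_\Q\C$; your verification of Hecke-stability via $\psi(TG)(T')=\psi(G)(TT')$ and of the reducedness of $\mathbb{T}_{\Q,F}$ simply fills in details the paper leaves implicit.
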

\begin{proof}By (Rat), there exists a basis $G_1,\ldots,G_r$ of $\langle TF\ |\ T\in \mathbb{T}_\Q \rangle_\C$ which gives a rational representation 
$\mathbb{T}_{\Q,F}\hookrightarrow {\rm End}_\Q(\langle G_1,\ldots, G_r \rangle_\Q)=M_r(\Q)$. 
This means that $\Q_F$ is a finite extension of $\Q$. Any Hecke eigenform $F$ in $\langle TF\ |\ T\in \mathbb{T}_\Q \rangle_\C$ can be written as 
$F=\ds\sum_{i=1}^r x_i G_i$ where $x_i$ is an element of a conjugate field of $\Q_F$ in $\C$. Hence 
we may set ${}^\tau F=\ds\sum_{i=1}^r \tau(x_i) G_i$.  
\end{proof}

\begin{remark} The rationality assumption holds for any vector-valued holomorphic Siegel cusp forms of arbitrary weight 
(cf. Lemma 2.1 of \cite{taylor-thesis}). Contrary to the holomorphic case, there is no known general result for (Rat) in 
our situation due to the lack of algebraic geometric structures. 
\end{remark} 

\section{Infinity type of the associated automorphic representation of $GSp_4$}\label{inf}

Let $F$ be a Hecke eigenform in $S_{(2,1)}(\G(N),-\frac 5{12},0)$ with the associated 
cuspidal representation $\pi_F=\pi_{\infty}\otimes \otimes_p' \pi_p$ of $GSp_4(\A)$. 
Let $\phi_F$ be the function on the adele group $GSp_4(\Bbb A)$ attached to 
$F$ as in section 3.3, and let $\tilde\phi_F=l(\phi_F)$ for any linear functional $l:V_{\underline{k}}(\C)\lra \C$.
 All the right translates of $\tilde\phi_F$ span $\pi_F$. Then $\tilde\phi_F=v_\infty\otimes \otimes_p' v_p$ with $v_p\in \pi_p$ and $v_\infty\in\pi_\infty$, and $v_\infty$ inherits the analytic properties from 
$\tilde\phi_F$. For simplicity, let us put $G=GSp_4(\R)$ and $B=B(\R)$ only in this section. 

In this section, we determine $\pi_\infty$.
We will use the notations from \cite{miy-oda}. It is easy to modify the result of \cite{miy-oda} for $G$.
For simplicity, let $K=K_{\infty}:={\rm Stab}_G(I_2\sqrt{-1})$.
Its identity component $K_0$ has index 2 in $K$, and it is a maximal compact subgroup of $Sp_4(\Bbb R)$, and we have the isomorphism 
$u: K_0\simeq U(2)$ via 
$u: k=\begin{pmatrix} A&B\\-B&A\end{pmatrix}\longmapsto u(k)=A+\sqrt{-1}B$.

We review principal series representations and their $K$-types from \cite{miy-oda}: Let ${\rm Ind}_B^G\, \chi$ be the principal series representation
which is the space $V$ of all $C^\infty$-functions $f: GSp_4(\Bbb R)\longrightarrow \Bbb C$ satisfying
$$f(tug)=\chi(t)|t_0|^{-\frac 32}|t_1|^2|t_2| f(g), \quad t=\diag(t_1,t_2,t_0 t_1^{-1}, t_0 t_2^{-1}).
$$
Here $B=TU$, $t\in T$, $u\in U$, and $\chi(t)=\prod_{i=0}^2 \epsilon_i(\frac {t_i}{|t_i|}) |t_i|^{s_i}$ with $\epsilon_i=1$ or $\sgn$, and $s=(s_0,s_1,s_2)\in \Bbb C^3$. We write $\chi=\chi(\epsilon_1|\, |^{s_1}, \epsilon_2 |\,|^{s_2}, \epsilon_0 |\, |^{s_0})$. 
Note that the infinity component of the central character is 
$\epsilon_{\infty}=\epsilon_1\epsilon_2\epsilon_0^2 |\,|^{s_1+s_2+2s_0}=\epsilon_1\epsilon_2 |\,|^{s_1+s_2+2s_0}$.
Since $\epsilon$ is trivial on $Z_G(\Bbb R)^+$ (It implies that $\epsilon$ is unitary and of finite order), 
$\epsilon(a)=|a|^{s_1+s_2+2s_0}=1$ for $aI_4\in Z_G(\Bbb R)^+$. Hence
$s_1+s_2+2s_0=0$. From (\ref{sgn}), we see that $\epsilon_0=\sgn$. 

If $(\rho,W)$ is a representation of $K$, and $v\in W$ has weight $(l,l')$, then $\rho(\diag(1,1,-1,-1))v$ has weight $(-l',-l)$. 
So if we let $\tau_{l,l'}$ be the representation of $K_0$ with dominant weight $(l,l')$, the weight structure of an irreducible representation of $K$ combines that of $\tau_{l,l'}$ and $\tau_{-l',-l}$ for some pair $(l,l')$. 

Now let $V_K$ be the subspace of $K$-finite vectors in the representation space $V$. Then
$$V_K=\bigoplus_{\lambda} m(\lambda) \tilde\tau_{\lambda},
$$
where $m(\lambda)$ is the multiplicity and 
$\tilde\tau_{\lambda}=\begin{cases} \tau_{l,l'}\oplus \tau_{-l',-l}, &\text{if $\lambda=(l,l')$, $l>0$, $l'\ne -l$,}\\
\tau_{l,-l}, &\text{otherwise.}\end{cases}
$

Since $\phi_F$ has weight $(2,1)$, $\tau_{2,1}$ occurs in $V_K$. Hence by \cite{miy-oda}, Proposition 3.2, $\epsilon_1\epsilon_2=\sgn$. Without loss of generality, we can assume that $\epsilon_1=1$ and $\epsilon_2=\sgn$. 

Let $H_1 = \diag(1,0,-1,0)$; $H_2=\diag(0,1,0,-1)$; $E_{e_1-e_2}$ is the matrix with 1 at $(1,2)$-entry, $-1$ at $(4,3)$-entry, and zero everywhere else;
$E_{e_1+e_2}$ is the matrix with 1 at $(1,4)$-entry and $(2,3)$-entry, and zero everywhere else;
$E_{2e_1}$ is the matrix with 1 at $(1,3)$-entry, and zero everywhere else;
$E_{2e_2}$ is the matrix with 1 at $(2,4)$-entry, and zero everywhere else. 
Let $E_{-\ast}:={}^t E_\ast$ for $\ast\in \{2e_1,2e_2,e_1+e_2,e_1-e_2\}$.
Let
$$M=\begin{pmatrix} H_1&E_{e_1-e_2}&2E_{2e_1}&E_{e_1+e_2}\\ E_{-e_1+e_2}&H_2&E_{e_1+e_2}&2E_{2e_2}\\2E_{-2e_1}&E_{-e_1-e_2}&-H_1&-E_{-e_1+e_2}\\E_{-e_1-e_2}&2E_{-2e_2}&-E_{e_1-e_2}&-H_2\end{pmatrix}.
$$
Then 
\begin{equation}
\label{generators} \widetilde{\Delta}_1=\frac 1{12} {\rm trace}(M^2)/2\ \ {\rm and}\ \widetilde{\Delta}_2=\det(M)
\end{equation}
give rise to two generators of the center of the universal enveloping algebra.
Here $\widetilde{\Delta}_1$ is the usual Casimir element as in \cite{knapp1}.

Let $v_\infty$ be the highest weight vector in the $K$-type $(2,1)$ in $\pi_\infty$.
Then by \cite{miy-oda}, page 77, 
\begin{equation}\label{casimir}
\widetilde{\Delta}_1 v_{\infty}=\frac {s_1^2+s_2^2-5}{12} v_{\infty},\quad \widetilde{\Delta}_2 v_{\infty}=s_1^2s_2^2 v_{\infty}.
\end{equation}
Hence in order that $\Delta_1 F=-\frac 5{12} F$ and $\Delta_2 F=0$, one should have $s_1=s_2=0$. 
Therefore, $\pi_{\infty}$ is a subquotient of the induced representation
${\rm Ind}_B^G\, \chi,$
where $\chi=\chi(1,\sgn, \sgn)$. 
Now under the Weyl group action, for $\chi=\chi(1,\sgn,\sgn)$,
$$\{ w\chi | w\in W\}=\{\chi(1,\sgn,1), \chi(\sgn,1,1), \chi(1,\sgn,\sgn), \chi(\sgn,1,\sgn)\},
$$ 
and 
${\rm Ind}_B^G \chi$ and ${\rm Ind}_B^G w\chi$ are equivalent. In particular, 
${\rm Ind}_B^G\, \chi(1,\sgn,1)$ and ${\rm Ind}_B^G\, \chi(1,\sgn,\sgn)$ are equivalent.

\begin{lemma}\label{knapp-stein} The Knapp-Stein $R$-group of ${\rm Ind}_B^G \chi(1,\sgn,\sgn)$ is trivial. Hence it is irreducible, tempered and generic.
\end{lemma}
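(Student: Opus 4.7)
The approach is to compute the Knapp-Stein $R$-group $R_\chi = W_\chi/W'_\chi$ directly and show it is trivial; irreducibility then follows, and temperedness and genericity are immediate from standard properties of unitary principal series induced from a Borel.

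First I would determine the stabilizer $W_\chi$. Using the Weyl group action on characters of $T$ recorded in Section 2, namely $s_1\chi(\chi_1,\chi_2,\sigma) = \chi(\chi_2,\chi_1,\sigma)$ and $s_2\chi(\chi_1,\chi_2,\sigma) = \chi(\chi_1,\chi_2^{-1},\chi_2\sigma)$, I enumerate the $W$-orbit of $\chi = \chi(1,\sgn,\sgn)$: it consists of $\chi$, $s_1\chi = \chi(\sgn,1,\sgn)$, $s_2\chi = \chi(1,\sgn,1)$, and $s_1s_2\chi = \chi(\sgn,1,1)$. A direct check gives $s_2\cdot\chi(\sgn,1,\sgn) = \chi(\sgn,1,\sgn)$, so $s_1 s_2 s_1\chi = \chi$, and the orbit has four elements. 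Hence $|W_\chi| = 2$ and $W_\chi = \{e, s_1 s_2 s_1\}$, where $s_1 s_2 s_1 = s_\gamma$ is the reflection attached to the long root $\gamma = 2e_1 - e_0$ (since $s_1 s_2 s_1 = s_{s_1\beta}$ and $s_1\beta = 2e_1 - e_0$).

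Next I would analyze the rank-one reducibility at $\gamma$. The coroot is the embedding $\gamma^\vee\colon\R^\times \hookrightarrow T(\R)$, $x \mapsto t(x,1,1)$ (verified by $\gamma(\gamma^\vee(x)) = x^2$). Restricting $\chi$, one finds $(\chi\circ\gamma^\vee)(x) = 1(x)\cdot\sgn(1)\cdot\sgn(1) = 1$, the trivial character of $\R^\times$. Consequently, the rank-one induced representation ${\rm Ind}_{B\cap M_\gamma}^{M_\gamma}\chi$ is, on the $SL_2(\R)$-factor of $M_\gamma$, the spherical unitary principal series at $s = 0$, which is well-known to be irreducible; equivalently, the normalized intertwining operator $A(s_\gamma,\chi)$ acts by a scalar.

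By Knapp-Stein theory (cf. \cite{knapp1}), $W'_\chi$ is generated by those reflections $s_\alpha\in W_\chi$ for which the associated rank-one induced representation is irreducible. Step two shows $s_\gamma\in W'_\chi$, whence $W'_\chi = W_\chi$ and $R_\chi$ is trivial. Therefore ${\rm Ind}_B^G\chi$ is irreducible. Since each component of $\chi = \chi(1,\sgn,\sgn)$ is a unitary character of $\R^\times$, the induction is unitary, hence tempered; and since principal series induced from a Borel admit a Whittaker model via the convergent Jacquet integral, the (now irreducible) ${\rm Ind}_B^G\chi$ is generic. The main obstacle I anticipate is the rank-one reducibility computation: one must correctly identify the coroot $\gamma^\vee$ attached to $s_1 s_2 s_1$ and verify that $\chi\circ\gamma^\vee$ is trivial rather than the sign character, since the latter would yield $W'_\chi = \{e\}$, a nontrivial $R$-group $R_\chi \simeq \Z/2\Z$, and a decomposition of the induced representation into two limits of discrete series of $SL_2(\R)$ against which we would then need to argue.
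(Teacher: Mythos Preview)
Your proposal is correct and follows the same approach as the paper: compute $W_\chi=\{e,s_1s_2s_1\}$ and then deduce that the $R$-group is trivial. The paper's proof stops after identifying $W_\chi$ and simply asserts ``Hence the $R$-group is trivial,'' leaving the verification that $s_1s_2s_1\in W'_\chi$ implicit; you actually carry out this step by identifying the root $\gamma=s_1\beta$, computing $\chi\circ\gamma^\vee=1$, and invoking the irreducibility of the spherical unitary principal series of $SL_2(\R)$ at $s=0$, so your argument is in fact more complete than the one in the paper.
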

\begin{proof} For the definition of the $R$-group, see \cite{knapp}.
If $\chi=\chi(1,\sgn,\sgn)$, we can see easily that $W_{\chi}=\{1, s_1s_2s_1\}$.
Hence the $R$-group is trivial. 
\end{proof}

Hence by the above lemma, $\pi_\infty\simeq {\rm Ind}_B^G\, \chi(1,\sgn,\sgn)$.
By \cite{mor}, page 414, the Langlands parameter of ${\rm Ind}_B^G \chi(1,\sgn,\sgn)$ is
$$\phi: W_{\Bbb R}\lra GSp_4(\C); \quad \phi(z)=I_4,\quad \phi(j)=\diag(1,-1,-1,1),
$$
and the Langlands parameter of ${\rm Ind}_B^G \chi(1,\sgn,1)$ is
$$\phi: W_{\Bbb R}\lra GSp_4(\C); \quad \phi(z)=I_4,\quad \phi(j)=\diag(-1,1,1,-1).
$$
In fact, they are conjugate in $GSp_4(\C)$, since $s_1 \diag(-1,1,1,-1) s_1=\diag(1,-1,-1,1)$, where $s_1$ is the simple reflection with respect to the short root, i.e.,
$s_1=\begin{pmatrix}
0 & 1 & 0 & 0\\
1 & 0 & 0 & 0\\
0 & 0 & 0 & 1\\
0 & 0 & 1 & 0
\end{pmatrix}.
$

We summarize our results as follows:

\begin{theorem} \label{infinity}
Let $F$ be a real analytic Siegel cusp form of weight $(2,1)$, level $N$ with the central character $\varepsilon$ such that $F$ has the eigenvalues $-\frac 5{12}$ and $0$ for the generators $\Delta_1$ and $\Delta_2$.  
Let $\pi_F=\pi_{\infty}\otimes \otimes_p' \pi_p$ be the associated cuspidal representation of $GSp_4(\Bbb A)$. 
Then $\pi_{\infty}\simeq {\rm Ind}_B^G\, \chi(1,\sgn,\sgn)$. 
The Langlands parameter of $\pi_{\infty}$ is given by
$$\phi: W_{\Bbb R}\longrightarrow GSp_4(\Bbb C),\quad \text{$\phi(z)=I_4$ for $z\in\Bbb C$},\quad
\phi(j)=\diag(1,-1,-1,1).
$$
\end{theorem}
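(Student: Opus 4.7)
The plan is to realize $\pi_\infty$ as a principal series representation and then pin down the inducing character by a three-step process: constraining the central character via unitarity and the sign computation (\ref{sgn}); constraining the short-root signs $\epsilon_1,\epsilon_2$ via the $K$-type structure; and finally constraining the continuous parameters $s_1,s_2$ via the Casimir eigenvalues.

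First, I would use that $\tilde\phi_F$ contains a vector $v_\infty\in\pi_\infty$ which transforms under $K_0\subset K_\infty$ with highest weight $(2,1)$, so $\tau_{2,1}$ occurs as a $K_0$-type of $\pi_\infty$. Since $F$ is cuspidal, $\pi_\infty$ is unitary; the infinitesimal character of $\pi_\infty$ is determined by the pair of eigenvalues $(-\tfrac{5}{12},0)$, so $\pi_\infty$ is a subquotient of some principal series ${\rm Ind}_B^G\chi$ with $\chi=\chi(\epsilon_1|\cdot|^{s_1},\epsilon_2|\cdot|^{s_2},\epsilon_0|\cdot|^{s_0})$. Unitarity of the central character forces $s_1+s_2+2s_0=0$, and the computation (\ref{sgn}) showing $\phi_F({\rm diag}(1,1,-1,-1),I_{\A_f})=-1$ pins down $\epsilon_0=\sgn$. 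Next, since $\tau_{2,1}$ occurs in $V_K$, the Miyazaki-Oda analysis of $K$-types in principal series (Proposition 3.2 of \cite{miy-oda}) forces $\epsilon_1\epsilon_2=\sgn$; up to the Weyl group action which leaves the induced representation invariant, we may take $\epsilon_1=1$, $\epsilon_2=\sgn$.

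Next I would apply the explicit formulas for the action of $\widetilde{\Delta}_1$ and $\widetilde{\Delta}_2$ on the highest weight vector of $\tau_{2,1}$ in the principal series, namely (\ref{casimir}):
\[
\widetilde{\Delta}_1 v_\infty=\tfrac{s_1^2+s_2^2-5}{12}v_\infty,\quad \widetilde{\Delta}_2 v_\infty=s_1^2 s_2^2 v_\infty.
\]
The hypothesis $\Delta_1 F=-\tfrac{5}{12}F$, $\Delta_2 F=0$ translates, via the identification of $\mathcal Z$ with $\C[\Delta_1,\Delta_2]$, into $s_1^2+s_2^2=0$ and $s_1^2 s_2^2=0$, which forces $s_1=s_2=0$. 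Thus $\pi_\infty$ is a subquotient of ${\rm Ind}_B^G\chi(1,\sgn,\sgn)$.

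Finally, I would invoke Lemma \ref{knapp-stein}: the Knapp-Stein $R$-group for $\chi(1,\sgn,\sgn)$ is trivial, so ${\rm Ind}_B^G\chi(1,\sgn,\sgn)$ is irreducible, tempered, and generic; hence $\pi_\infty$ is the full induced representation and not merely a subquotient. Reading off the Langlands parameter from \cite{mor} (p.\ 414) gives $\phi(z)=I_4$ and $\phi(j)={\rm diag}(1,-1,-1,1)$, completing the argument. The main technical obstacle is the $K$-type bookkeeping step which rules out other principal series: one must be careful that $\tau_{2,1}$ really is the minimal $K$-type and that the parity assignment $\epsilon_1=1$, $\epsilon_2=\sgn$ (as opposed to $\epsilon_1=\sgn$, $\epsilon_2=1$) is achievable up to Weyl conjugacy, which is guaranteed since the two choices differ by $s_1$ and both yield the same induced representation.
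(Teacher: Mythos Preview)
Your proposal is correct and follows essentially the same route as the paper: constrain the central character, use the Miyazaki--Oda $K$-type analysis to pin down $\epsilon_1\epsilon_2=\sgn$, use the Casimir eigenvalues (\ref{casimir}) to force $s_1=s_2=0$, then invoke Lemma~\ref{knapp-stein} for irreducibility and read off the Langlands parameter from \cite{mor}. Two minor corrections: unitarity of the central character alone only yields $s_1+s_2+2s_0\in i\R$; the paper uses that $\varepsilon$ is of \emph{finite order} (hence trivial on $Z_G(\R)^+$) to get $s_1+s_2+2s_0=0$, and $(2,1)$ is in fact not the minimal $K$-type (that is $(1,0)$, cf.\ Remark~\ref{(1,0)}), though your argument only requires that $\tau_{2,1}$ occurs, which suffices.
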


Note that $\pi_\infty$ is generic and tempered. It is a totally degenerate limit of discrete series in the sense of \cite{CK}. It is denoted by $D_{(1,0)}[0]$ in \cite{mor}, page 414. which is a limit of large discrete series. 
Since ${\rm Ind}_B^G\, \chi(1,\sgn,\sgn)$ is irreducible, the $L$-packet of $\pi_\infty$ is a singleton.

\begin{remark}\label{(1,0)}
Note that the minimal $K$-types of $\pi_\infty$ are $(1,0)$ and $(0,-1)$. 
One can get the highest weight vector in the $K$-type $(2,1)$ from the one in the $K$-type $(1,0)$
by first applying the $\rho(\diag(1,1,-1,-1))$, and then by taking a differential operator. Since the highest weight vectors give rise to Siegel modular forms, we can describe this differential operator on Siegel modular forms explicitly as follows \cite{harris}:
Let 
$Z=X+Y \sqrt{-1}=\begin{pmatrix} z_{11}&z_{12}\\z_{12}&z_{22}\end{pmatrix}\in \mathcal{H}_2$.
Let $F$ be a $C^\infty$ Siegel modular form of weight $(0,-1)$, namely, the weight corresponding to 
${\rm St}_2\otimes \det^{-1}{\rm St}_2$. 
Define the differential operator $(\partial_{ij})_{1\le i\le j\le 2}$ on such $F$ by  
$$
2\sqrt{-1}\det(Z)Z^{-1}Y^{1/2}\Big(\frac{\partial}{\partial z_{ij}}\Big)_{ij}Y^{1/2}+ \left(\Big(\frac{\partial}{\partial z_{ij}}\Big)_{ij}+I_2 \right)\det(Y^{1/2})^{-1}Y^{1/2},
$$
where $\Big(\frac{\partial}{\partial z_{ij}}\Big)_{ij}=\begin{pmatrix}
\frac{\partial}{\partial z_{11}}& \frac{1}{2}\frac{\partial}{\partial z_{12}} \\
 \frac{1}{2}\frac{\partial}{\partial z_{12}} & \frac{\partial}{\partial z_{22}} 
\end{pmatrix}$. 
Then define the differential operator $D$ as 
$$D:=\det((\partial_{ij})_{1\le i\le j\le 2}).
$$
Then by a minor modification of the calculation in Section 6 of \cite{harris}, we have 
$$D:M_{(0,-1)}(\G(N),c_1,c_2)\lra M_{(2,1)}(\G(N),c_1,c_2),
$$
which commutes with the actions of Hecke operators outside $N$. 
Hence we have a map 
$$M_{(1,0)}(\G(N),c_1,c_2)\stackrel{F(Z)\mapsto \det(Y)F(Z)}{\lra} M_{(0,-1)}(\G(N),c_1,c_2)\stackrel{D}{\lra} M_{(2,1)}(\G(N),c_1,c_2)$$
which preserves eigenforms. 
Note that for $F\in M_{(1,0)}(\G(N),c_1,c_2)$, if $T(p^i)F=\alpha(p^i)F$, then one can easily see that 
$T(p^i)\det(Y)F=p^i\alpha(p^i)\det(Y)F$ which explains $L^N(s-1,F)=L^N(s,D(\det(Y)F))$ as in Remark \ref{weightchange}.
Since  $D(\det(Y)F)$ and $F$ give rise to the same automorphic 
representation, the image of such an $F$ under this map is nonvanishing if $F$ is a cusp form. 
\end{remark}

\begin{remark} 
The minimal $K$-type of ${\rm Ind}_B^G\, \chi(1,1,\epsilon_0)$, where $\epsilon_0=1$ or sgn, 
is $(0,0)$. The minimal $K$-types of ${\rm Ind}_B^G\, \chi(\sgn,\sgn,\epsilon_0)$, where $\epsilon_0=1$ or sgn, 
are $(1,1)$ and $(-1,-1)$. 
Also the Langlands parameter of ${\rm Ind}_B^G \chi(1,1,1)$ is
$$\phi: W_{\Bbb R}\lra GSp_4(\C); \quad \phi(z)=I_4,\quad \phi(j)=I_4.
$$
The Langlands parameter of ${\rm Ind}_B^G \chi(\sgn,\sgn,1)$ is
$$\phi: W_{\Bbb R}\lra GSp_4(\C); \quad \phi(z)=I_4,\quad \phi(j)=\diag(1,-1,1,-1).
$$
So real analytic Siegel cusp forms of weight $(0,0)$ and $(1,1)$ with the eigenvalues $-\frac 5{12}$ and $0$ for the two generators of the algebra of all $Sp_4(\R)$-invariant differential operators on the Siegel upper half plane, would correspond to symplectically even Artin representations.
\end{remark}

\section{Correspondence between automorphic representations of $GSp_4$ and $GL_4$}\label{Arthur}\label{arthur}

J. Arthur \cite{arthur} described the correspondence between automorphic representations of $GSp_4(\A)$ and $GL_4(\A)$, under the validity of stabilization of the twisted trace formula for $GSp_4$. We assume his result. In fact, we only need the transfer from the 
cuspidal representation $\pi_F$ of
$GSp_4(\A)$ to an automorphic representation $\Pi$ of $GL_4(\A)$ so that $L(s,\pi_F)=L(s,\Pi)$. 

We summarize his results on $L_{{\rm disc}}^2(G(F)\backslash G(\Bbb A),\chi)$
as follows. According to its transfer to $GL_4$, it is divided into 6 families of global $L$-packets:
\begin{enumerate}
\item stable, semisimple type: its transfer to $GL_4$ is a cuspidal automorphic representation of $GL_4(\A)$ which is not orthogonal type.

\item unstable, semisimple (Yoshida type): its transfer to $GL_4$ is an isobaric sum of two distinct cuspidal automorphic representations of 
$GL_2(\A)$ with the same central character $\chi$. This is called endoscopic type, 

\item stable, mixed (Soudry type): it is a CAP representation from Klingen parabolic subgroup with a cuspidal automorphic representation 
$\pi$ of $GL_2(\A)$ of orthogonal type such that $\omega_{\pi}^2=\chi$. In this case, $L(s, Ad(\pi)\otimes\eta)$ has a pole at $s=1$, where 
$\eta$ is determined by $\pi$, and $Ad(\pi)$ is the Gelbart-Jacquet lift to $GL_3$ \cite{GJ}. 
Its transfer to $GL_4$ is the residual automorphic representation which is the Langlands quotient of 
${\rm Ind}_{GL_2\times GL_2}^{GL_4}\, \pi |\det|^{\frac 12}\times \pi|\det|^{-\frac 12}$.

\item unstable, mixed (Saito-Kurokawa type): it is a CAP representation from Siegel parabolic subgroup with a cuspidal automorphic representation $\pi$ of $GL_2(\A)$ and a character $\lambda$ such that $\omega_{\pi}=\lambda^2=\chi$. Its transfer to $GL_4$ is the isobaric representation $\pi\boxplus \chi(\det_2)$, where $\chi(\det_2)$ is the quotient of ${\rm Ind}_B^{GL_2} \chi|\,|^{\frac 12}\otimes \chi|\, |^{-\frac 12}$.

\item unstable, almost unipotent (Howe-Piatetski-Shapiro type): it is a CAP representation from the Borel subgroup with two gr\"ossencharacters $\chi_1,\chi_2$ such that $\chi_1^2=\chi_2^2=\chi$. Its transfer to $GL_4$ is the isobaric representation $\chi_1(\det_2)\boxplus \chi_2(\det_2)$.

\item stable, almost unipotent (one dimensional type): Its transfer to $GL_4$ is $\lambda(\det_4)$ with $\lambda^4=\chi$, which is the Langlands quotient of 
${\rm Ind}_B^{GL_4} \lambda|\, |^{\frac 32}\otimes \lambda|\, |^{\frac 12}\otimes \lambda|\, |^{-\frac 12}\otimes \lambda|\, |^{-\frac 32}$.
\end{enumerate}

\begin{lemma}\label{CAP} Let $F$ be a Hecke eigenform in $S_{(2,1)}(\G(N),-\frac 5{12},0)$ with the associated 
cuspidal representation $\pi_F$ of $GSp_4(\A)$. Then $\pi_F$ is not a CAP representation.
\end{lemma}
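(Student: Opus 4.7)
The plan is to argue by contradiction, using the temperedness of $\pi_\infty$ to rule out each of the three CAP families in Arthur's classification. The key point is that every CAP $L$-packet for $GSp_4$ has a non-tempered transfer to $GL_4$ at every place, while $\pi_F$ transfers to something that must be tempered at infinity because $\pi_{F,\infty}$ is tempered.

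More precisely, by Theorem \ref{infinity} and Lemma \ref{knapp-stein}, $\pi_\infty \simeq {\rm Ind}_B^G\,\chi(1,\sgn,\sgn)$ is the full principal series induced from unitary characters of $T(\R)$: it is irreducible, tempered, and generic, and its Langlands parameter $\phi\colon W_\R\to GSp_4(\C)$ has finite image. Composing $\phi$ with the standard embedding $GSp_4(\C)\hookrightarrow GL_4(\C)$ produces a bounded, hence tempered, $L$-parameter for $GL_4(\R)$. Since Arthur's transfer $\Pi$ of $\pi_F$ (granted by assumption (TR)) matches $L$-factors with $\pi_F$ and is compatible with the archimedean local Langlands correspondence, $\Pi_\infty$ is tempered.

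Now suppose for contradiction that $\pi_F$ is CAP. By the classification listed above, $\pi_F$ belongs to one of the families (3), (4), or (5), and in each case the transfer $\Pi$ to $GL_4$ has been described explicitly. In case (4), $\Pi=\pi\boxplus\chi(\det_2)$, and the summand $\chi(\det_2)$ is the Langlands quotient of ${\rm Ind}_B^{GL_2}\chi|\cdot|^{1/2}\otimes\chi|\cdot|^{-1/2}$, whose archimedean $L$-parameter contains the unbounded characters $\chi_\infty|\cdot|^{\pm 1/2}$. In case (3), $\Pi$ is the residual Langlands quotient of ${\rm Ind}_{GL_2\times GL_2}^{GL_4}\,\pi|\det|^{1/2}\times\pi|\det|^{-1/2}$, whose archimedean parameter is shifted by $|\cdot|^{\pm 1/2}$. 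In case (5), $\Pi=\chi_1(\det_2)\boxplus\chi_2(\det_2)$, again carrying $|\cdot|^{\pm 1/2}$ shifts. In all three cases $\Pi_\infty$ is non-tempered, contradicting the preceding paragraph. Hence $\pi_F$ cannot be CAP.

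The main obstacle is the clean propagation of temperedness from $\pi_{F,\infty}$ to $\Pi_\infty$: one needs that Arthur's transfer respects the archimedean Langlands parameters, which is built into (TR) together with the endoscopic character identities at infinity. Once this is in hand, the argument is simply a comparison of $L$-parameters between the tempered parameter of $\pi_\infty$ recorded in Theorem \ref{infinity} and the manifestly non-tempered parameters appearing in the three CAP transfers.
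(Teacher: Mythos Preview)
Your argument is mathematically sound, but it takes a different route from the paper and, more importantly, introduces a dependence the paper deliberately avoids.

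The paper proves this lemma \emph{without} invoking (TR) or Arthur's classification. For CAP representations associated to the Borel or Siegel parabolic, it cites Piatetski-Shapiro's theorem \cite{ps0} that the central character must be the square of a character; the parity condition~(\ref{parity}) gives $\chi_2(-1)=(-1)^{k_1+k_2}=-1$, so the central character of $\pi_F$ cannot be a square, ruling these out. For the Klingen case, the paper uses Soudry's explicit theta-lifting description \cite{soudry} to produce a nonzero map from $\pi_F$ into an induced representation whose archimedean piece is a subquotient of ${\rm Ind}_B^G\,\chi(|\cdot|^{-1}\epsilon_1,|\cdot|^{s_1}\epsilon_2,|\cdot|^{s_0}\epsilon_0)$; since $\pi_\infty={\rm Ind}_B^G\,\chi(1,\sgn,\sgn)$ by Theorem~\ref{infinity}, Harish-Chandra's subquotient theorem (infinitesimal characters must match) gives the contradiction.

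The advantage of the paper's argument is that it rests only on unconditional, classical results, so Lemma~\ref{CAP} stands independently of (TR). In the paper's logical flow, this lemma is what \emph{justifies} restricting (TR) to the form of Theorem~\ref{transfer}: once CAP is excluded, only cases (1) and (2) of Arthur's list remain. Your argument runs the logic in the other direction, assuming Arthur's full classification together with archimedean local compatibility to exclude CAP. That is not circular---the paper does assume Arthur's results globally---but it makes Lemma~\ref{CAP} conditional on the very machinery it is meant to set up, and it leans on the archimedean compatibility of the transfer, which is stronger than the minimal ``$L(s,\pi_F)=L(s,\Pi)$'' formulation the paper says it actually needs. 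Your uniform temperedness-mismatch argument is cleaner to state; the paper's case analysis buys unconditionality.
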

\begin{proof} By Theorem 2.1 of \cite{ps0}, the central character of any CAP representation associated to Borel subgroup or Siegel parabolic subgroup is the square of a character. Hence
the parity condition (\ref{parity}) implies that $\pi_F$ is not a CAP representation associated to Borel subgroup or Siegel parabolic subgroup. 
If $\pi_F$ is a CAP associated to Klingen parabolic subgroup, $\pi_F$ comes from a theta lifting from $GO(2,T)_\Q$ where $T$ is 
an anisotropic quadratic form over $\Q$. Let $D_T$ be the discriminant of $T$ and let $K=\Q(\sqrt{D_T})$. 
Let $\chi_T:\A^\times \lra \C^\times$ be the quadratic character associated to $K/\Q$.  
Then by \cite{soudry}, Theorem A and the proof of Lemma 1.4 of \cite{soudry}, 
there exists a non-trivial map 
$$\pi_F\lra {\rm Ind}^{GSp_4(\A)}_{Q(\A)} \, \chi_T |\,|^{-1}_{\A}\rtimes (\sigma\otimes |det|_{\A}),
$$
where $\sigma$ is the automorphic induction to $GL_2(\A)$ from a unitary Hecke character $\varphi$ on $\A^\times_K$.
Since $\pi_F$ is irreducible, this map is injective. In particular, $\pi_\infty$ is a subquotient of the principal series 
$Ind_B^G\: \chi(|\,|^{-1}\epsilon_1, |\,|^{s_1}\epsilon_2, |\,|^{s_0}\epsilon_0)$ for some $s_0,s_1\in\Bbb C$, and $\epsilon_i=1$ or sgn.
However by Theorem \ref{infinity}, $\pi_\infty=\pi(1,\sgn,\sgn)$. This contradicts to Harish-Chandra's subquotient theorem (cf. \cite{Mu}, Theorem 2.1).
\end{proof}

Hence conjecturally $\pi_F$ falls into the case (1) or (2). So
\begin{theorem} (TR)\label{transfer} 
Let $\pi_F$ be as above. Then there exists an automorphic representation $\Pi$ of $GL_4(\A)$ which is either cuspidal or an isobaric sum of two distinct cuspidal automorphic representations of $GL_2$ 
such that $L(s,\Pi)=L(s,\pi_F)$.
\end{theorem}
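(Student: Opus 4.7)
The plan is to invoke Arthur's classification of the discrete automorphic spectrum of $GSp_4(\A)$ recalled in the six families listed immediately before the statement, then to show $\pi_F$ must lie in family (1) or family (2), since these are precisely the families whose transfer to $GL_4(\A)$ has the shape claimed in the theorem. So the heart of the argument is the elimination of families (3) through (6).

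For (3), (4), and (5), which are precisely the three CAP families (of Klingen, Siegel, and Borel type respectively), I would apply Lemma~\ref{CAP}, which shows that $\pi_F$ is never a CAP representation. That lemma in turn combines the parity condition (\ref{parity}) on the central character, which kills the Saito--Kurokawa and Howe--Piatetski-Shapiro types, with the determination of $\pi_\infty \simeq {\rm Ind}_B^G\chi(1,\sgn,\sgn)$ from Theorem~\ref{infinity}, together with Soudry's structural result on Klingen CAPs, to handle the Soudry type via Harish-Chandra's subquotient theorem.

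Family (6) consists of the one-dimensional representations of $GSp_4$, i.e.\ characters $\lambda\circ\nu$; these lie in the residual (not cuspidal) spectrum and so cannot account for $\pi_F$, which is cuspidal by construction. Alternatively, the archimedean component in family (6) is a character of $GSp_4(\R)$, incompatible with the generic tempered principal series $\pi_\infty$ identified in Theorem~\ref{infinity}. Either way, family (6) is excluded.

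What remains is family (1), where $\Pi$ is cuspidal on $GL_4(\A)$, or family (2), where $\Pi = \pi_1 \boxplus \pi_2$ is an isobaric sum of two distinct cuspidal representations of $GL_2(\A)$ sharing the central character $\e$ of $\pi_F$. In either case, compatibility of Arthur's transfer with local $L$-factors at every place delivers $L(s,\pi_F) = L(s,\Pi)$. The genuine obstacle is not this case analysis but the availability of Arthur's classification itself, which remains conditional on the stabilization of the twisted trace formula for $GSp_4$; this is exactly why the theorem is labeled (TR) and is placed among the standing assumptions rather than proved unconditionally.
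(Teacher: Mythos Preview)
Your proposal is correct and follows essentially the same route as the paper: after recalling Arthur's six families, the paper applies Lemma~\ref{CAP} to rule out the CAP families (3)--(5) and then concludes ``Hence conjecturally $\pi_F$ falls into the case (1) or (2),'' stating the theorem as the conditional assumption (TR). You are slightly more explicit than the paper in spelling out why family (6) is excluded, but this only makes the argument clearer.
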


\section{Application of Rankin-Selberg method}

\subsection{Spinor L-functions}
Let $F$ be a real analytic Siegel cusp form of weight $(k_1,k_2)$ so that $F|[S_{p,1}]=\chi_1(p)$ and $F|[S_{p,p}]=\chi_2(p)$ for 
$p\nmid N$. 
We assume that $F$ is a Hecke eigenform of $T_{i,p}$ for  $p\nmid N$ with eigenvalues $a_{i,p},\ i=1,2$, i.e.,
$a_{1,p}=\lambda(p)$ and 
$$\lambda(p)^2-\lambda(p^2)-p^{k-1}\chi_2(p)=pa_{2,p}+p^{k}(1+p^{-2})\chi_2(p),
$$
where $k=k_1+k_2-3$.
Then we define the partial L-function of $F$ by 
$L^{N}(s,F):=\ds\prod_{p{\not\hspace{0.7mm}|}N}L_p(s,F),$
$$
L_p(s,F)=(1-a_{1,p}T+\{pa_{2,p}+p^{k}(1+p^{-2})\chi_2(p)\}T^2-\chi_2(p)p^{k}a_{1,p}T^3+\chi_2(p)^2p^{2k}T^4)^{-1},
$$
where $T=p^{-s}$. 
Let $\pi_F$ be the cuspidal automorphic representation of $GSp_4$ associated to $F$, and 
let $L^N(s,\pi_F)=\ds\prod_{p\nmid N} L_p(s, \pi_{p})$ be the partial automorphic L-function of $\pi_F$. 
Then by definition, we have 
\begin{equation}\label{L}
L^N(s-\tfrac{k_1+k_2-3}{2},\pi_F)=L^N(s,F). 
\end{equation}
The L-function $L^{N}(s,F)$ converges in some half plane Re$(s)\gg 0$, and has a 
meromorphic continuation to the whole complex plane (\cite{langlands0}). By Arthur's conjecture (Theorem \ref{transfer}), it satisfies the desired functional equation and is entire. When $\pi_F$ is globally generic, Moriyama \cite{mor} proved the analytic properties of $L^N(s,F)$.

We now discuss the relation between $a_{i,p}$ and Satake parameters. 
Let $K(N)_p$ be the $p$-component of $K(N)$. Then $K(N)_p=GSp_4(\Z_p)$ for any $p\nmid N$. 
So if $p\nmid N$, $\pi_{p}$ is a spherical unitary representation which can be written as the irreducible quotient of 
Ind$^{G(\Q_p)}_{B(\Q_p)}\, \chi(\mu_1,\mu_2,\eta)$, where $\mu_1,\mu_2,\eta:\Q^\times_p\lra \C^\times$ are quasi-characters. 
Note that the central character of $\pi_{p}$ is $\varepsilon:=\mu_1\mu_2\eta^2$. 

Let 
$$\alpha_{0p}=\eta(p^{-1}),\quad \alpha_{1p}=\mu_1(p^{-1}),\quad \alpha_{2p}=\mu_2(p^{-1}).
$$
Then under the map
${}^L GSp_4=GSp_4(\Bbb C)\hookrightarrow GL_4(\Bbb C)$, 
the Satake parameter corresponding to $\pi_{p}$ is (see, for example, \cite{soudry}, p 95)
$${\rm diag}(\alpha_{0p}\alpha_{1p}\alpha_{2p},\, \alpha_{0p}\alpha_{1p},\, \alpha_{0p}\alpha_{2p},\, \alpha_{0p}).
$$
Then by using the adelic form of $F$ and the relation (\ref{local-global}), we can easily see that 
\begin{equation}\label{a1p}
a_{1,p}=p^{\frac{k_1+k_2}{2}-3}p^{\frac{3}{2}}(\alpha_{0p}\alpha_{1p}\alpha_{2p}+\alpha_{0p}\alpha_{1p}+\alpha_{0p}\alpha_{2p}+\alpha_{0p})
=p^{\frac{k_1+k_2-3}{2}}\alpha_{0p}(1+\alpha_{1p})(1+\alpha_{2p})
\end{equation}
and 
\begin{eqnarray}\label{a2p}
&{}& pa_{2,p}+(p^{k_1+k_2-5}+p^{k_1+k_2-3})\chi_2(p) \\
&=& p^{2(\frac{k_1+k_2-3}{2})}(\alpha_{0p}^2 \alpha_{1p}+\alpha_{0p}^2\alpha_{2p}+\alpha_{0p}^2\alpha_{1p}\alpha_{2p}+\alpha_{0p}^2\alpha_{1p}\alpha_{2p}+\alpha_{0p}^2\alpha_{1p}^2\alpha_{2p}+\alpha_{0p}^2\alpha_{1p}\alpha_{2p}^2) \nonumber \\
&=& p^{2(\frac{k_1+k_2-3}{2})}\varepsilon(p^{-1})(\alpha_{1p}+\alpha_{2p}+2+\alpha_{1p}^{-1}+\alpha_{2p}^{-1}). \nonumber
\end{eqnarray}
where $\varepsilon(p^{-1})=\chi_2(p)$. Here the factor $p^{\frac{3}{2}}$ in (\ref{a1p}) is the contribution from the value of  
$\delta^{-\frac{1}{2}}_B$ at 
${\rm diag}(1,1,p^{-1},p^{-1})$ where $\delta_B$ is the modulus character of $B$ defined by 
$\delta_B(h)=|a|^4_p|b|^2_p|c|^{-3}_p$ for 
$h={\rm diag}(a,b,ca^{-1},cb^{-1})u\in B(\Q_p)=T(\Q_p)U(\Q_p)$. 
A reason why the factor $\delta^{-\frac{1}{2}}_B({\rm diag}(1,1,p^{-1},p^{-1}))$ appears there is because 
the eigenvalues (namely, Satake parameters) of a non-zero spherical vector $(\pi_{p})^{GSp_4(\Z_p)}$ are usually computed via the Jacquet module 
with respect to $B$ (cf. Proposition 2.3 of \cite{m&y}).

\subsection{Rankin-Selberg $L$-functions}\label{ran-sel}
Let $F$ be a Hecke eigenform in $S_{(2,1)}(\G(N),-\frac 5{12},0)$ with the associated 
cuspidal representation $\pi_F$ of $GSp_4(\A)$.
Let $\Pi$ be the transfer of $\pi_F$ to $GL_4/\Bbb Q$ by our assumption (TR). Then by (\ref{L}),
$$L(s,\Pi)=L(s,\pi_F)=\sum_{n=1}^\infty \frac {a(n)}{n^s},
$$
where $a(p)=\lambda(p)$. Now consider the Rankin-Selberg $L$-function $L(s,\Pi\times\tilde\Pi)$ \cite{JS}. Note that either $\Pi$ is cuspidal or $\Pi=\pi_1\boxplus\pi_2$, where $\pi_i$'s are cuspidal representation of $GL_2/\Bbb Q$ with $\pi_1\not\simeq\pi_2$. Hence $L(s,\Pi\times\tilde\Pi)$ has a pole of order 1 at $s=1$ when $\Pi$ is cuspidal, and of order 2 when $\Pi=\pi_1\boxplus \pi_2$. Also $L(s,\Pi\times\tilde\Pi)$ has no zeros in a small neighborhood of $s=1$. Let $\{\beta_1,...,\beta_4\}$ be the Satake parameter of $\Pi_p$ so that $\lambda(p)=\beta_1+\cdots+\beta_4$. Then by Euler product expansion,
$\log L(s,\Pi)=\sum_{m=1}^\infty \sum_p \frac {b(p^m)}{m p^{ms}},$ where $b(p^m)=\beta_1^m+\cdots+\beta_4^m$.
We can show easily that
$$\log L(s,\Pi\times\tilde\Pi)=\sum_{m=1}^\infty\sum_p \frac {|b(p^m)|^2}{m p^{ms}}.
$$
So if $s>1$, $\log L(s,\Pi\times\tilde\Pi)\geq \sum_p \frac {|\lambda(p)|^2}{p^{s}}.$
Hence as $s\to 1^+$,
\begin{equation} \label{rankin}
\sum_p \frac {|\lambda(p)|^2}{p^s}\leq \begin{cases} \log \frac 1{s-1}+O(1), &\text{if $\Pi$ is cuspidal},\\
2\log \frac 1{s-1}+O(1), &\text{if $\Pi=\pi_1\boxplus \pi_2$}.
\end{cases}
\end{equation}

\begin{remark}
In fact, $\log L(s,\Pi\times\tilde\Pi)=\sum_p \frac {|\lambda(p)|^2}{p^s}+g(s)$ for a holomorphic function $g(s)$ near $s=1$.
We only need to show that $\sum_{m=2}^\infty \sum_p \frac {|b(p^m)|^2}{m p^{ms}}$ converges at $s=1$. 
By \cite{LRS}, $|b(p^m)|\leq 4 p^{\frac m2-\frac m{17}}$. Hence 
$\sum_p \frac {|b(p^m)|^2}{m p^{ms}}\ll \sum_p \frac {|b(p^m)|}{p^{ms-\frac m2+\frac m{17}}}$. By \cite{cho-kim}, if $m\geq 2$,
$\sum_{p\leq x} |b(p^m)|\ll x^{\frac m2}$. Hence by partial summation, for $m\geq 2$,
$$\sum_p \frac {|b(p^m)|}{p^{ms-\frac m2+\frac m{17}}}
\ll \int_2^\infty x^{\frac m2} x^{-ms+\frac m2-\frac m{17}-1}\, dx=2^{-ms+m-\frac m{17}}.
$$
Since $\sum_{m=2}^\infty 2^{-ms+m-\frac m{17}}$ converges at $s=1$, our result follows.
\end{remark}

Now let $\Pi_5$ be the transfer of $\pi_F$ to $GL_5/\Bbb Q$. It is obtained as follows: Let $\tau$ be an automorphic representation of 
$GL_6$ such that $\tau_p\simeq \wedge^2(\Pi_p)$ for $p\ne 2,3$ \cite{Kim}. Since $\Pi$ is the transfer of $\pi_F$, $L(s,\Pi,\wedge^2\otimes\varepsilon^{-1})$ has a pole at 
$s=1$ \cite{Kim-Sh1}.
Hence $\tau$ is an isobaric automorphic representation given by
$$\tau=(\Pi_5\otimes\varepsilon)\boxplus \varepsilon,
$$
where $\Pi_5$ is an automorphic representation of $GL_5$. It is easy to see that $\Pi_5$ is a weak transfer of $\pi_F$ to $GL_5$ corresponding to the $L$-group homomorphism $GSp_4(\Bbb C)\longrightarrow GL_5(\Bbb C)$, given by the second fundamental weight \cite{Kim1}.

The Satake parameter for $\wedge^2(\Pi)_p$ is
\begin{eqnarray*}
{\rm diag}(\alpha_{0p}^2 \alpha_{1p},\, \alpha_{0p}^2\alpha_{2p},\, \alpha_{0p}^2\alpha_{1p}\alpha_{2p},\, \alpha_{0p}^2\alpha_{1p}\alpha_{2p},\, \alpha_{0p}^2\alpha_{1p}^2\alpha_{2p},\, \alpha_{0p}^2\alpha_{1p}\alpha_{2p}^2) \\
={\rm diag}(\varepsilon(p^{-1})\alpha_{1p},\, \varepsilon(p^{-1})\alpha_{2p},\, \varepsilon(p^{-1}),\, \varepsilon(p^{-1})\alpha_{1p}^{-1},\, \varepsilon(p^{-1})\alpha_{2p}^{-1}).
\end{eqnarray*}

Hence the unramified factor of the standard $L$-function of $\Pi_5$ is given by
$$(1-\alpha_{1p}T)(1- \alpha_{2p}T)(1- T)(1- \alpha_{1p}^{-1}T)(1-\alpha_{2p}^{-1}T).
$$

Let 
$$L(s,\Pi_5)=\sum_{n=1}^\infty \frac {c(n)}{n^s}.
$$
Then $c(p)=\alpha_{1p}+\alpha_{2p}+1+\alpha_{1p}^{-1}+\alpha_{2p}^{-1}$. This is called the standard $L$-function of $F$.

Consider the Rankin-Selberg $L$-function $L(s,\Pi_5\times\tilde \Pi_5)$. Note that $L(s,\Pi_5\times\tilde\Pi_5)$ has a pole of at least order 1, and at most order 5 at $s=1$.
For $s>1$, $\log L(s,\Pi_5\times\tilde\Pi_5)\geq \sum_p \frac {|c(p)|^2}{p^s}$. Hence
$\sum_p \frac {|c(p)|^2}{p^s}\leq 5\log\frac 1{s-1}+O(1)$ as $s\to 1^+$.

Now from (\ref{a2p}),
\begin{eqnarray*}
\lambda(p)^2-\lambda(p^2)-\varepsilon(p^{-1})p^{-1} &=&
\alpha_{0p}^2 \alpha_{1p}+\alpha_{0p}^2\alpha_{2p}+\alpha_{0p}^2\alpha_{1p}\alpha_{2p}+\alpha_{0p}^2\alpha_{1p}\alpha_{2p}+\alpha_{0p}^2\alpha_{1p}^2\alpha_{2p}+\alpha_{0p}^2\alpha_{1p}\alpha_{2p}^2 \\
 &=& \varepsilon(p^{-1})(c(p)+1).
\end{eqnarray*}

Hence $\lambda(p^2)=\lambda(p)^2-\varepsilon(p^{-1}) p^{-1}-\varepsilon(p^{-1})(c(p)+1)$.

Let $L$ be the Galois closure of the Hecke field $\Bbb Q_F=\Bbb Q(\lambda(p),c(p),\varepsilon(p^{-1}), p\nmid N)$ 
and $\mathcal O_L$ be the ring of integers of 
$L$. Here $\Bbb Q_F=\Bbb Q(\lambda(p),\lambda(p^2),\varepsilon(p^{-1}), p\nmid N)$.
We make the integrality assumption:
\[
\text{For all $p\nmid N$,\quad $\lambda(p)$, $\lambda(p)^2-\lambda(p^2)-\epsilon(p^{-1})p^{-1}\in \mathcal O_L.$} \tag{\rm Int}
\]

It is equivalent to $\lambda(p), c(p)\in \mathcal O_L$ for all $p\nmid N$. Now we prove

\begin{prop}\label{density} Under (TR) and (Int), for any real $\eta>0$, there exists a set $X_{\eta}$ of rational primes such that 
${\rm den.sup}X_{\eta}\leq \eta$, and the set $\{(\lambda(p), \lambda(p^2))\, |\, p\notin X_{\eta}\}$ is a finite set, or equivalently, 
$\{ \text{Satake parameters at $p$}\, |\, p\notin X_{\eta}\}$ is finite.
\end{prop}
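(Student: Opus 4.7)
The plan is to show finiteness by reducing to Northcott's theorem for algebraic integers, using the Rankin--Selberg bounds as the source of average control on Hecke eigenvalues. The identity
$$\lambda(p^2)=\lambda(p)^2-\varepsilon(p^{-1})p^{-1}-\varepsilon(p^{-1})(c(p)+1)$$
derived just above lets me replace $(\lambda(p),\lambda(p^2))$ by $(\lambda(p),c(p))$; since $\varepsilon$ takes only finitely many values, finiteness for the latter pair outside some $X_\eta$ will imply finiteness for the former outside the same set.

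Next, I would upgrade the Rankin--Selberg bounds
$$\sum_p\frac{|\lambda(p)|^2}{p^s}\leq 2\log\tfrac{1}{s-1}+O(1),\qquad \sum_p\frac{|c(p)|^2}{p^s}\leq 5\log\tfrac{1}{s-1}+O(1)$$
to every Galois conjugate. Namely, for each embedding $\sigma\colon L\hookrightarrow\mathbb{C}$ (where $L$ is the Galois closure of $\mathbb{Q}_F$), I use Proposition~\ref{conj} (which invokes (Rat)) to obtain a Hecke eigenform ${}^\sigma F\in S_{(2,1)}(\Gamma(N),-\tfrac{5}{12},0)$ with Hecke eigenvalues $\sigma(\lambda(p))$ and $\sigma(c(p))$; then (TR) yields the transfers ${}^\sigma\Pi$ on $GL_4$ and ${}^\sigma\Pi_5$ on $GL_5$, whose Rankin--Selberg $L$-functions supply the analogous bounds
$$\sum_p\frac{|\sigma\lambda(p)|^2}{p^s}=O\!\left(\log\tfrac{1}{s-1}\right),\qquad \sum_p\frac{|\sigma c(p)|^2}{p^s}=O\!\left(\log\tfrac{1}{s-1}\right).$$
Summing these finitely many bounds over $\sigma$, I define $A(p):=\sum_\sigma\bigl(|\sigma\lambda(p)|^2+|\sigma c(p)|^2\bigr)$ and obtain a single bound
$$\sum_{p\nmid N}\frac{A(p)}{p^s}\leq C_L\log\tfrac{1}{s-1}+O(1),\qquad s\to 1^+.$$

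Given $\eta>0$, pick $M=M(\eta)$ with $C_L/M\leq\eta$ and set
$$X_\eta:=\bigl\{\,p\nmid N : A(p)>M\,\bigr\}.$$
A standard partial-summation/Wirsing-style argument using the above Dirichlet bound shows $\mathrm{den.sup}\,X_\eta\leq C_L/M\leq\eta$. For $p\notin X_\eta$, every embedding satisfies $|\sigma\lambda(p)|\leq\sqrt{M}$ and $|\sigma c(p)|\leq\sqrt{M}$. By assumption (Int), both $\lambda(p)$ and $c(p)$ lie in $\mathcal{O}_L$, so Northcott's theorem (algebraic integers of a fixed number field with all archimedean absolute values bounded by a constant form a finite set) implies that $\{(\lambda(p),c(p)) : p\notin X_\eta\}$ is finite, and hence so is $\{(\lambda(p),\lambda(p^2)) : p\notin X_\eta\}$.

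The delicate step will be the Galois-conjugate enhancement of the Rankin--Selberg bounds: bounding only $|\lambda(p)|$ under the fixed embedding does not suffice, because $\mathcal{O}_L$ contains infinitely many elements bounded under a single embedding once $L$ has more than one archimedean place. The proof really relies on producing ${}^\sigma F$ and transferring it, which is why (Rat) enters essentially even though it is not explicitly listed in the hypothesis line of the proposition; without (Rat) one would need an unconditional Galois-stability statement for the $GL_4$-automorphy of $\Pi$, which is not available for our non-cohomological $\pi_\infty$.
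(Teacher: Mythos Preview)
Your proposal is correct and follows essentially the same route as the paper's proof: both reduce to bounding all Galois conjugates of $\lambda(p)$ and $c(p)$ via Rankin--Selberg estimates applied to the conjugate forms ${}^\sigma F$, then invoke the lattice/Northcott finiteness of $\mathcal{O}_L$-elements with bounded archimedean absolute values. The only cosmetic differences are that the paper defines the exceptional set by $\max_\sigma|\sigma(\lambda(p))|^2>c$ or $\max_\sigma|\sigma(c(p))|^2>c$ rather than your sum $A(p)>M$, and the density bound is a one-line Chebyshev-type inequality (no partial summation or Wirsing machinery is needed).
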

Here den.sup$X_{\eta}$ is defined by
$$\limsup_{s\to 1^+} \frac {\sum_{p\in X_{\eta}} p^{-s}}{\log \frac 1{s-1}}.
$$
We also define the Dirichlet density den$(X_{\eta})$ by 
$$\lim_{s\to 1^+} \frac {\sum_{p\in X_{\eta}} p^{-s}}{\log \frac 1{s-1}}.
$$
\begin{proof}
For $c>0$, consider two sets:
\begin{eqnarray*}
Y(c)&=&\{ \text{$a\in\mathcal O_L \,| \, |\sigma(a)|^2\leq c$ for any $\sigma\in {\rm Gal}(L/\Bbb Q)$}\},\\
X(c)&=&\{\text{$p\, |\,$ $\lambda(p)$ or $c(p)$ does not belong to $Y(c)$}\}.
\end{eqnarray*}

Note that since $\mathcal O_L$ is a lattice, $Y(c)$ is a finite set for any $c>0$. By the assumption (Int), 
$\lambda(p), c(p)\in \mathcal{O}_L$. So if $p\notin X(c)$, $\lambda(p), c(p)\in Y(c)$. 
Hence the set $\{ (\lambda(p), c(p))\, |\,\, p\notin X(c)\}$
is finite. Since $\lambda(p^2)=\lambda(p)^2-\varepsilon(p^{-1}) p^{-1}-\varepsilon(p^{-1})(c(p)+1)$, the set $\{ (\lambda(p), \lambda(p^2)) \,|\, p\notin X(c) \}$ is finite.

For each $\sigma\in {\rm Gal}(L/\Bbb Q)$, ${}^\sigma F$ is a cuspidal eigenform with $T(p)({}^\sigma F)=\sigma(\lambda(p))({}^\sigma F)$. 
Hence 

$$\sum_{\sigma}\sum_p \frac {|\sigma(\lambda(p)|^2}{p^s}\leq 2r\log\frac 1{s-1}+O(1),\quad \text{as $s\to 1^+$}
$$
where $r=[L:\Bbb Q]$. Also 
$$\sum_{\sigma}\sum_p \frac {|\sigma(c(p)|^2}{p^s}\leq 5r\log\frac 1{s-1}+O(1),\quad \text{as $s\to 1^+$}
$$

If $p\in X(c)$, $|\sigma_0(\lambda(p))|^2>c$ or $|\sigma_1(c(p))|^2>c$ for some $\sigma_0, \sigma_1\in {\rm Gal}(L/\Bbb Q)$.
Therefore
$$c\sum_{p\in X(c)} p^{-s}\leq \sum_{\sigma}\sum_p \frac {|\sigma(\lambda(p))|^2}{p^s}+\sum_{\sigma}\sum_p \frac {|\sigma(c(p))|^2}{p^s}\leq 7r\log\frac 1{s-1}+O(1), \quad \text{as $s\to 1^+$}.
$$

Hence, den.sup$X(c)\leq \frac {7r}c$. Take $c$ such that $c\geq \frac {7r}{\eta}$, and $X_{\eta}=X(c)$. This proves Proposition \ref{density}.
\end{proof}
\begin{remark}\label{(2,1)}
If $F$ is a Siegel cusp form of weight $(k_1,k_2)$, then by (\ref{L}),
$L(s-\tfrac{k_1+k_2-3}{2},\pi_F)=L(s,F)$. So
$$\sum_p \frac {|\lambda(p)|^2}{p^s}=a\log\frac 1{s-(k_1+k_2-2)}+O(1),\quad \text{as $s\to k_1+k_2-2$}
$$
where $a=1$ or 2. Hence only when $(k_1,k_2)=(2,1)$, we can use the above argument.
\end{remark}

\section{Conjecture on the existence of mod $\ell$ Galois representations}\label{mod-Galois}

In this section we formulate a conjecture on the existence of the mod $\ell$ Galois representations 
attached to a real analytic Siegel modular form $F$ of weight $(2,1)$, in analogy with holomorphic Siegel cusp forms.

Let $F\in S_{(2,1)}(\Gamma(N),-\frac 5{12},0)$ be a Hecke eigenform with eigenvalues $\lambda(p^i)$  for $T(p^i)$, $F|[S_{p,1}]=\chi_1(p)F$, and $F|[S_{p,p}]=\chi_2(p)F$  ($p\nmid  N$). 
Let $\pi_F=\pi_{\infty}\otimes \otimes_p' \pi_p$ be the cuspidal automorphic representation attached to $F$. 
Recall that $\pi_F$ is not a CAP representation (Lemma \ref{CAP}).

\begin{conjecture}\label{galois} Assume (Rat) and (Int) for $F$. 
Let $\ell$ be an odd prime which is coprime to $N$. 
Then for each finite place $\lambda$ of $\Q_F$ with the 
residue field $\F_\lambda$, there exists a continuous semi-simple representation 
$$\rho_{F,\lambda} : G_\Q\lra GSp_4(\bF_\lambda),
$$
which is unramified outside of $\ell N$ 
so that 
$$\det(I_4-\rho_{F,\lambda}({\rm Frob}_p) T)\equiv 
1-\lambda(p)T+\{\lambda(p)^2-\lambda(p^2)-p^{-1}\chi_2(p)\}T^2-\chi_2(p)\lambda(p)T^3+\chi_2(p)^2T^4\, {\rm mod}\,\lambda,
$$
for any $p\nmid \ell N$. Furthermore, $\rho_{F,\lambda}$ is symplectically odd, i.e. $\rho_{F,\lambda}(c)$ has eigenvalues $1,1,-1,-1$ and
$\rho_{F,\lambda}(c)\stackrel{\tiny{GSp_4}}{\sim }{\rm diag}(1,-1,-1,1)$
for the complex conjugation $c$. 
\end{conjecture}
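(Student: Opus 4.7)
The plan is to adapt the Deligne--Serre congruence strategy from the weight-one elliptic case to the $GSp_4$ Siegel setting. Under the assumption (Int), the Hecke eigenvalues $\lambda(p)$ and $\lambda(p)^2-\lambda(p^2)-p^{-1}\chi_2(p)$ lie in the ring of integers $\O_L$ of $L$, the Galois closure of the Hecke field $\Q_F$. For each finite place $\lambda$ of $L$ above an odd prime $\ell\nmid N$ and any $p\nmid \ell N$, the mod $\lambda$ reduction of the polynomial
$$P_p(T) := 1-\lambda(p)T+\{\lambda(p)^2-\lambda(p^2)-p^{-1}\chi_2(p)\}T^2-\chi_2(p)\lambda(p)T^3+\chi_2(p)^2T^4$$
is therefore well-defined in $\F_\lambda[T]$, and the task is to realise it as $\det(I_4-\rho_{F,\lambda}({\rm Frob}_p)T)$ for a suitable $\rho_{F,\lambda}: G_\Q\lra GSp_4(\bF_\lambda)$.

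The central step would be to construct a mod $\lambda$ congruence between $F$ and some holomorphic vector-valued Siegel cusp form $G$ of regular weight $(k_1,k_2)$, $k_1\ge k_2\ge 3$. For such holomorphic $G$, Taylor and subsequent works attach a continuous semisimple $\rho_{G,\lambda}: G_\Q \lra GSp_4(\bF_\lambda)$, unramified outside $\ell N$, with Frobenius characteristic polynomials matching the Hecke data of $G$. To produce the congruence, one would follow Serre--Hida: exhibit an Eisenstein series $E$ on $GSp_4$ whose arithmetic Fourier expansion is $\equiv 1 \pmod{\ell}$, and use a product-type construction to match the system of Hecke eigenvalues of $F$ modulo $\lambda$ with that of a holomorphic form of higher regular weight. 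Setting $\rho_{F,\lambda} := \rho_{G,\lambda}$ would then yield a representation with the correct Frobenius polynomials by construction. The required symplectic oddness, namely that $\rho_{F,\lambda}(c)$ has eigenvalues $\{1,1,-1,-1\}$ and is $GSp_4$-conjugate to $\diag(1,-1,-1,1)$, would be inherited from $\rho_{G,\lambda}$, where it ultimately reflects the archimedean Langlands parameter $\phi(j)=\diag(1,-1,-1,1)$ of $\pi_\infty$ computed in Theorem \ref{infinity}, transferred to the regular weight case without change of sign.

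The principal obstacle, and the reason this statement remains only conjectural, is the absence of any geometric realisation of $F$. Holomorphic Siegel cusp forms live in coherent or \'etale cohomology of the Siegel threefold, but the totally degenerate limit-of-discrete-series $D_{(1,0)}[0]$ infinity type of $\pi_F$ falls outside every cohomology theory currently available for $GSp_4$. Consequently the congruence step above cannot presently be carried out inside any known space of sections: one would need either a $p$-adic or overconvergent theory interpolating real analytic weight $(2,1)$ forms with holomorphic forms of higher weight, in the spirit of completed cohomology (Emerton, Calegari--Geraghty) or higher Coleman theory (Boxer--Pilloni), or else a direct cohomological construction of Galois representations attached to the $GL_4$ transfer $\Pi$ supplied by (TR) together with the differential operator $D$ of Remark \ref{(1,0)} relating weights $(2,1)$ and $(1,0)$. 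The latter route is also out of reach, since $\Pi_\infty$ is not regular algebraic and so the existing constructions of Shin, Harris--Lan--Taylor--Thorne, and Scholze do not apply.
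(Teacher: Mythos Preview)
The paper does not prove this statement: it is labeled \emph{Conjecture}~\ref{galois} and is taken as a standing hypothesis (Gal) in Theorem~\ref{artin-rep}. There is no proof in the paper to compare your proposal against. Your write-up correctly recognises this, sketching the Deligne--Serre congruence template and then explaining why the key congruence step (lifting the weight via a mod~$\ell$ Eisenstein series to reach the cohomological range where Taylor's construction applies) cannot currently be carried out, because the totally degenerate limit of discrete series $D_{(1,0)}[0]$ has no known geometric realisation. That diagnosis matches the paper's own stance: the authors list (Gal) alongside (Rat) and (Int) as assumptions that are valid for holomorphic Siegel cusp forms of weight $(k_1,k_2)$ with $k_1\ge k_2\ge 2$ but remain open for real analytic weight $(2,1)$ forms, precisely for the reasons you give.

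One small caution on your sketch: even if a congruence $F\equiv G \pmod{\lambda}$ with $G$ holomorphic of regular weight were available, the symplectic oddness of $\rho_{G,\lambda}$ would \emph{not} be automatic from the archimedean parameter of $\pi_F$; rather, $\nu(\rho_{G,\lambda}(c))=-1$ would have to come from the construction of $\rho_{G,\lambda}$ itself (Hodge--Tate weights and purity), and then Lemma~\ref{odd} of the paper converts this into the $GSp_4$-conjugacy with $\diag(1,-1,-1,1)$. Your phrasing suggests the oddness is ``transferred to the regular weight case without change of sign'' from $\pi_\infty$, which conflates the archimedean parameter of $F$ with that of $G$.
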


\begin{lemma}\label{odd}
The property of being symplectically odd is equivalent to 
$\nu(\rho_\lambda(c))=-1$, where $\nu$ is the similitude character in Section 2.
\end{lemma}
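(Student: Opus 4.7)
The plan is to prove the two implications separately, using only that $\rho_\lambda(c)^2=I$ (since $c$ has order $2$) together with the definition of the similitude character.

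For the easy direction ($\Rightarrow$): if $\rho_\lambda(c)$ is $GSp_4$-conjugate to $\diag(1,-1,-1,1)$, then since $\nu$ is a character of $GSp_4$, we have $\nu(\rho_\lambda(c))=\nu(\diag(1,-1,-1,1))$. A direct computation ${}^t g\,J\,g$ with $g=\diag(1,-1,-1,1)$ and $J=\left(\begin{smallmatrix}0&I_2\\-I_2&0\end{smallmatrix}\right)$ gives $gJg=-J$, so this value equals $-1$.

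For the nontrivial direction ($\Leftarrow$): set $g=\rho_\lambda(c)$ and assume $\nu(g)=-1$. From $g^2=I$, the eigenvalues of $g$ lie in $\{\pm 1\}$. The relation $\det(g)=\nu(g)^2$ on $GSp_4$ gives $\det(g)=1$, so the number of $-1$ eigenvalues is even. The possibilities are thus $(1,1,1,1)$, $(-1,-1,-1,-1)$, or $(1,1,-1,-1)$; the first two yield $g=\pm I\in Z(GSp_4)$, for which $\nu=1$, contradicting our hypothesis. Hence the eigenvalues are $1,1,-1,-1$, and $V=V_+\oplus V_-$ with $\dim V_\pm=2$.

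Next I would exploit the identity ${}^t g\,J\,g=-J$, which translates to $\omega(gv,gw)=-\omega(v,w)$ for the standard symplectic form $\omega$. For $v,w\in V_\epsilon$ this reads $\omega(v,w)=-\omega(v,w)$, forcing $\omega$ to vanish on each $V_\pm$; being $2$-dimensional isotropic subspaces of a $4$-dimensional symplectic space, both are Lagrangian, and they are transverse. By Witt's theorem applied to the symplectic space $(V,\omega)$, the symplectic group $Sp_4(\overline{\F}_\lambda)$ acts transitively on pairs of transverse Lagrangian subspaces. Hence some $h\in Sp_4\subseteq GSp_4$ conjugates $(V_+,V_-)$ to a fixed standard pair, which exhibits $g$ as $GSp_4$-conjugate to $\diag(1,1,-1,-1)$. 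Finally, the Weyl element $s_2$ (which lies in $Sp_4$) satisfies $s_2\diag(1,1,-1,-1)s_2^{-1}=\diag(1,-1,-1,1)$ by a direct check on the basis, completing the conjugacy.

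The only place where care is needed is the invocation of Witt's theorem: one must verify transversality of $V_+$ and $V_-$ (automatic from $V=V_+\oplus V_-$) and ensure the statement of Witt applies in the characteristic of $\overline{\F}_\lambda$, which is fine since $\ell$ is odd and the bilinear form is nondegenerate. Everything else is linear algebra, so I expect no serious obstacle.
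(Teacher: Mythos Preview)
Your argument is correct. Both directions are fine; the key observation that $\nu(g)=-1$ forces each eigenspace $V_{\pm}$ to be Lagrangian is exactly what makes the conjugation work, and transitivity of $Sp_4(\overline{\F}_\lambda)$ on ordered pairs of transverse Lagrangians (equivalently, on symplectic bases) is standard and valid in odd characteristic. The final step via $s_2$ matches the paper.

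Your route, however, differs from the paper's. The paper argues by explicit matrix reduction: it picks a $+1$-eigenvector, conjugates it to $e_1$ by an element of $GSp_4$, thereby landing $\rho_{F,\lambda}(c)$ in the Klingen Levi $M_Q$, then uses the $SL_2$-block inside $M_Q$ and a further unipotent conjugation (together with $g^2=I_4$) to strip off the off-diagonal entries and reach $\diag(1,1,-1,-1)$. Your argument replaces all of this by the single geometric fact that the $\pm 1$-eigenspaces are transverse Lagrangians, which immediately gives the desired conjugacy. Your approach is shorter and more conceptual, and it generalizes cleanly to $GSp_{2n}$; the paper's approach has the virtue of being entirely explicit and not invoking Witt's theorem.
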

\begin{proof}
One implication is clear. So we assume that $\nu(\rho_{F,\lambda}(c))=-1$. 
Let $V$ be the representation space of $\rho_{F,\lambda}$. 
It is easy to see that $\rho_{F,\lambda}(c)$ has eigenvalues $1,1,-1,-1$. (See Lemma 2.3 and Lemma 2.5 of \cite{dieulefait}.) 
Let $v$ be an eigenvector in $V$ for the eigenvalue 1 and $\{e_1,e_2,f_1,f_2\}$ be the symplectic basis with respect to $J$. 
It is easy to see that there exists a matrix $P\in GSp_4(\bF_\lambda)$ such that $Pv=e_1$. 
Then we may assume that 
$$\rho_{F,\lambda}(c)\stackrel{\tiny{GSp_4(\bF_\lambda)}}{\sim}
\begin{pmatrix}
 1& 0 & 0 & 0\\
0 & a & 0& b \\
 0 & 0 & t& 0 \\
 0 & c & 0& d 
\end{pmatrix}
\begin{pmatrix}
 1& x_1 & x_3 & x_2\\
0 & 1 & x_2& 0 \\
 0 & 0 & 1& 0 \\
 0 & 0 & -x_1& 1 
\end{pmatrix}
\in M_Q(\bF_\lambda).
$$
Since $\rho_{F,\lambda}(c)$ is of order 2 and has eigenvalues $1,1,-1,-1$, one has $t=ad-bc=-1$. 
The unipotent part of RHS is preserved by the conjugation of the matrix of the form 
$\begin{pmatrix}
 1& 0 & 0 & 0\\
0 & x & 0& y \\
 0 & 0 & 1& 0 \\
 0 & z & 0& w 
\end{pmatrix}$ with $xw-yz=1$. Hence we have 
$$\rho_{F,\lambda}(c)\stackrel{\tiny{GSp_4(\bF_\lambda)}}{\sim}
\begin{pmatrix}
 1& 0 & 0 & 0\\
0 & 1 & 0& 0 \\
 0 & 0 & -1& 0 \\
 0 & 0 & 0& -1 
\end{pmatrix}
\begin{pmatrix}
 1& x_1 & x_3 & x_2\\
0 & 1 & x_2& 0 \\
 0 & 0 & 1& 0 \\
 0 & 0 & -x_1& 1 
\end{pmatrix}:=A
\in M_Q(\bF_\lambda).
$$
The condition $\nu(\rho_{F,\lambda}(c))^2=I_4$ implies that $x_1=0$. 
Let $P=\begin{pmatrix}
 1& 0 & -\frac{x_3}{2} & -\frac{x_2}{2}\\
0 & 1 & -\frac{x_2}{2}& 0 \\
 0 & 0 & 1& 0 \\
 0 & 0 & 0& 1 
\end{pmatrix}\in M_Q(\bF_\lambda)$. 
Then one has 
$$\rho_{F,\lambda}(c)\stackrel{\tiny{GSp_4(\bF_\lambda)}}{\sim} A
\stackrel{\tiny{GSp_4(\bF_\lambda)}}{\sim}P^{-1}AP=\diag(1,1,-1,-1)$$
$$\stackrel{\tiny{GSp_4(\bF_\lambda)}}{\sim}s^{-1}_2\diag(1,1,-1,-1)s_2=\diag(1,-1,-1,1).
$$
\end{proof}
\begin{remark}\label{endoscopic}
If $\pi_F=\pi_\infty\otimes\otimes_p' \pi_p$ is endoscopic (i.e., its transfer to $GL_4$ is not cuspidal), then by \cite{roberts}, 
$\pi_F$ is associated to a pair $(\pi_1,\pi_2)$ of 
two automorphic cuspidal representations of $GL_2(\A)$ with the same central character $\varepsilon$ 
via theta lifting. 
Since the L-packet of $\pi_\infty$ is a singleton, by Proposition 4.2-(2) of \cite{roberts}, 
$(\pi_{i})_\infty$ should be tempered, but not essentially square integrable. 
Hence one has $(\pi_{i})_\infty={\rm Ind}^{GL_2(\R)}_{B(\R)}(|\cdot|^{s^{(i)}_1}\ve^{(i)}_1, 
|\cdot|^{s^{(i)}_2}\ve^{(i)}_2),\ i=1,2$ where $s^{(i)}_j\in \C$ and $\ve_i$ is 1 or $\sgn$.  
Comparing Langlands parameters, 
one can see that $\pi_i$ has to correspond to an elliptic newform $f_i$ of weight one.  
Thus there exists a finite set $S$ of rational primes which includes all ramified prime of $\pi_F,\ \pi_{f_1}$, and $\pi_{f_2}$ 
so that 
$$L(s,\pi_\infty)=L(s,\pi_{f_1,p})L(s,\pi_{f_2,p}),\: {\rm for\ any}\ p\not\in S.
$$
By Deligne-Serre \cite{d&s}, each $f_i$ gives rise to a unique Artin representation $\rho_{f_i}:G_\Q\lra GL_2(\C)$. 
Hence we may put $\rho_{F}:=\rho_{f_1}\oplus \rho_{f_2}$. 
We define the endoscopic subgroup of $GSp_4$ by 
$$H^{{\rm en}}:=\Bigg\{g=\begin{pmatrix}
 a & 0& b & 0\\
 0  & x& 0 & y \\
 c &0 &  d & 0\\
 0& z & 0 & w
\end{pmatrix}\in GSp_4\Bigg\}\simeq \{(A,B)\in GL_2\times GL_2\ |\ \det A=\det B\},
$$
where the isomorphism is given by $g\mapsto \Bigg(\begin{pmatrix}
 a& b\\
 c& d
\end{pmatrix},\ \begin{pmatrix}
 x& y\\
 z& w
\end{pmatrix}\Bigg)$. 
Since the central characters of $f_i$ are the same, we have $\det(\rho_{f_1})=\det(\rho_{f_2})$. Hence  
the image of $\rho_{F}$ is actually in $H^{{\rm en}}(\C)$. Further it is easy to see that
 $\rho_{F}$ is symplectically odd. 
\end{remark}

Let $\iota:GSp_4\hookrightarrow GL_4$ be the natural embedding. In what follows, we describe the image of semisimplification of 
$\iota\circ \rho_{F,\lambda}:G_\Q\lra GL_4(\bF_\lambda)$. 

\begin{prop}\label{mod-p-galois} Let $\rho_{F,\lambda} : G_\Q\lra GSp_4(\bF_\lambda)$ be as in Conjecture \ref{galois}. Then
one of the following holds:

\begin{enumerate}
\item $\iota\circ\rho_{F,\lambda}$ is absolutely irreducible and Im$\rho_{F,\lambda}$ is contained in $GSp_4(\F_\lambda)$,  
\item $\iota\circ\rho_{F,\lambda}$ is irreducible but not absolutely irreducible and there exists a finite extension $\Bbb F_{\lambda}'/\Bbb F_{\lambda}$, and an  
 absolutely irreducible representation $\sigma: G_\Q\lra GL_{n}(\F'_\lambda)$ with 
\newline $4= n [\F'_\lambda:\F_\lambda], n\not=4$ 
so that $\iota\circ\rho_\lambda=\ds\prod_{\tau\in {\rm Gal}(\F'_\lambda/\F_\lambda)}{}^\tau\sigma$, where 
${}^\tau\sigma(g)=\tau(\sigma(g))$ for $g\in G_\Q$,
\item Im $\rho_{F,\lambda}$ is contained in $M_\ast(\kappa),\ \ast\in \{B,P\}$ where $\kappa$ is a finite extension over $\F_\lambda$ with the 
degree at most $4$. 
\item Im $\rho_{F,\lambda}$ is contained in $M_Q(\kappa')$ or $H^{{\rm en}}(\kappa')$ where $\kappa'$ is a finite extension over $\F_\lambda$ with the degree at most $2$. 
\end{enumerate}
\end{prop}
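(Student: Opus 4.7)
My plan is to argue by a case analysis on the reducibility of the 4-dimensional $G_\Q$-module $V := \bar\F_\lambda^4$ underlying $\iota\circ\rho_{F,\lambda}$, exploiting the symplectic-up-to-similitude pairing $\langle-,-\rangle$ inherited from the $GSp_4$-structure.

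If $V$ is absolutely irreducible, I would prove that the image descends to $GSp_4(\F_\lambda)$, giving case (1). The trace of Frobenius at any unramified $p$ is the reduction of $\lambda(p)\in\mathcal{O}_L$, which already lies in $\F_\lambda$; Chebotarev density combined with Brauer--Nesbitt extends this to ${\rm tr}\,\rho(g)\in\F_\lambda$ for every $g\in G_\Q$. Since an absolutely irreducible representation over an algebraically closed field is determined up to isomorphism by its character, and since the Brauer obstruction over finite fields vanishes by Wedderburn's theorem, the representation descends to $\F_\lambda$. Schur's lemma shows the invariant bilinear form is unique up to a scalar, and Hilbert 90 for finite fields then lets one descend the form as well and conjugate the image into $GSp_4(\F_\lambda)$. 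If $V$ is irreducible over $\F_\lambda$ but splits over $\bar\F_\lambda$, I would invoke Galois descent (or Clifford theory applied to the open subgroup of $G_\Q$ fixing an irreducible constituent) to write $\iota\circ\rho_{F,\lambda}\cong\bigoplus_{\tau\in{\rm Gal}(\F'_\lambda/\F_\lambda)}{}^\tau\sigma$ with $\sigma$ absolutely irreducible of dimension $n$ over some $\F'_\lambda/\F_\lambda$; the numerical constraint $4=n[\F'_\lambda:\F_\lambda]$ with $n<4$ then gives case (2).

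In the remaining reducible case, I would let $W\subsetneq V$ be a proper $G_\Q$-stable subspace. Because $\rho_{F,\lambda}$ lands in $GSp_4$ and $\nu$ is a character, $W^\perp$ is also stable. If $\dim W=1$ (dually $3$), then $W$ is automatically isotropic, so $0\subset W\subset W^\perp\subset V$ is a stable flag of type $(1,2,1)$ whose stabilizer is the Klingen parabolic $Q$; after semisimplification the image lies in $M_Q$, with the middle 2-dimensional factor $W^\perp/W$ carrying a non-degenerate alternating form and the two outer lines satisfying $W\otimes(V/W^\perp)\cong\nu$. If $\dim W=2$ and $W\cap W^\perp=0$, then $V=W\oplus W^\perp$ as symplectic modules and the image lies in $H^{{\rm en}}$. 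In both situations a Galois swap of the two 2-dimensional (respectively 1-dimensional) pieces accounts for $[\kappa':\F_\lambda]\le 2$, giving case (4). If instead $\dim W=2$ and $W=W^\perp$ is Lagrangian, the image preserves the flag $0\subset W\subset V$ and semisimplifies into the Siegel Levi $M_P$; if $W$ fails to be absolutely irreducible, it splits further and one lands in the Borel Levi $M_B$. The Galois orbit of the at most four composition factors has size dividing $4$, giving case (3) with $[\kappa:\F_\lambda]\le 4$.

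The main obstacle will be the field-of-definition bookkeeping in the reducible cases: one must track how the absolute Galois group of $\F_\lambda$ permutes the composition factors while respecting the compatibility $W\otimes(V/W^\perp)\cong\nu$ forced by the similitude, and verify that when the Lagrangian $W$ splits further one still obtains an invariant direct-sum decomposition compatible with the symplectic form rather than merely a filtration (which is why one must take the semisimplification). The simultaneous descent of the representation and the form in case (1) also requires some cohomological care but is standard once the trace is known to lie in $\F_\lambda$.
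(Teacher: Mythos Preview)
Your approach is essentially the paper's: a case analysis on the reducibility of $V$, using the similitude pairing to pin down the Levi and invoking the Deligne--Serre descent lemma (their Lemma~6.13) for the field-of-definition claims. Your organization by $\dim W$ and the dichotomy $W\cap W^\perp=0$ versus $W=W^\perp$ is in fact cleaner than the paper's, which manipulates an explicit symplectic basis.

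There is one gap in your degree bound for case~(4). When $\dim W=1$ you assert that ``a Galois swap of the two $1$-dimensional pieces accounts for $[\kappa':\F_\lambda]\le 2$.'' This is only correct when the middle factor $W^\perp/W$ is irreducible. If it is reducible, $V$ splits into four characters and the Frobenius of $\F_\lambda$ can permute them as a $4$-cycle: take $\chi_i=\chi^{q^{i-1}}$ with $q=|\F_\lambda|$ and $\chi$ of order dividing $q^4-1$ but not $q^2-1$, paired by $\chi_1\chi_3=\chi_2\chi_4=\nu$; then the outer line $\chi_1$ is sent by Frobenius to $\chi_2$, which sits in the middle block, and the minimal field has degree~$4$. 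The fix is immediate but must be stated: if $W^\perp/W$ is reducible then $V$ also contains a stable Lagrangian (namely $W$ together with any stable line of $W^\perp/W$, after semisimplification), and one lands in $M_B$ with $[\kappa:\F_\lambda]\le 4$, i.e.\ case~(3). The same caveat applies to your $H^{\mathrm{en}}$ sub-case. So you should either order the analysis to treat the fully split and Lagrangian situations first, or add the irreducibility hypothesis on the $2$-dimensional pieces before invoking the degree-$2$ bound.
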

\begin{proof} 
There exists a finite extension $\F_\lambda'/\F_\lambda$ such that $\rho_{F,\lambda}: G_\Q\lra GSp_4(\F_\lambda')$. 
If $\iota\circ \rho_{F,\lambda}$ is irreducible, then so is $\rho_{F,\lambda}$. Let $\varepsilon=\chi^{-1}_2$.
Then by the symplectic pairing furnished on $\rho_{F,\lambda}$ by Conjecture \ref{galois}, we have an isomorphism 
$$\rho^\vee_{F,\lambda}\simeq  \rho_{F,\lambda}\otimes \varepsilon^{-2}.
$$ 
By Lemma 6.13 of \cite{d&s}, $\iota\circ \rho_{F,\lambda}$ is isomorphic to an irreducible representation 
$\Phi:G_\Q\lra  GL_4(\F_\lambda)$. 
By Chebotarev density theorem, we have an isomorphism between $\Phi^\vee$ and $\Phi\otimes \varepsilon^{-2}$ 
as $\F_\lambda[G_\Q]$-modules. 
We now divide into two cases. 
If $\Phi$ is not absolutely irreducible, this corresponds to the second claim and 
it is easy to prove it. So we assume that $\Phi$ is absolutely irreducible.
Then by Schur's lemma, one has (dropping the action of the character in notation for simplicity)
$$\F_\lambda={\rm End}_{\F_\lambda[G_\Q]}(\Phi)=(\Phi^\vee\otimes\Phi)^{G_\Q}=(\Phi\otimes\Phi)^{G_\Q}=({\rm Sym}^2\Phi)^{G_\Q}\oplus 
(\wedge^2\Phi)^{G_\Q}.$$
Hence 
$$\text{$({\rm Sym}^2\Phi)^{G_\Q}={\rm Bil}^{{\rm sym}}_{\F_\lambda[G_\Q]}(\Phi\times\Phi,\F_\lambda)=\F_\lambda$, or $(\wedge^2\Phi)^{G_\Q}=
{\rm Bil}^{{\rm anti-sym}}_{\F_\lambda[G_\Q]}(\Phi\times\Phi,\F_\lambda)=\F_\lambda$}
$$ 
where 
${\rm Bil}^{{\rm sym}}_{\F_\lambda[G_\Q]}(\Phi\times\Phi,\F_\lambda)$ 
(resp. ${\rm Bil}^{{\rm anti-sym}}_{\F_\lambda[G_\Q]}(\Phi\times\Phi,\F_\lambda)$) is 
the space consisting of all symmetric (resp. anti-symmetric) bilinear forms which commute with the Galois action. 
This means that there exists the symmetric or symplectic structure on $\Phi$. 
On the other hand, there exists a matrix $A\in GL_4(\F_\lambda')$ such that $\Phi=A^{-1}\rho_{F,\lambda} A$. 
Since the conjugate by an element of  $GL_4(\F_\lambda')$ preserves the symmetric or symplectic structure, 
we have ${\rm Bil}^{{\rm anti-sym}}_{\F_\lambda[G_\Q]}(\Phi\times\Phi,\F_\lambda)=\F_\lambda$. 

Next we consider the reducible cases. Let $\{e_1,e_2,f_1,f_2\}$ be the standard symplectic basis corresponding to $J$. 
We assume that $\rho_{F,\lambda}$ has an one dimensional subspace $V_1$ which is stable under the action of $G_\Q$. 
Fix a non-zero $v\in V_1$.  
Then it is easy to see that there exits a matrix $P\in GSp_4(\F_\lambda')$ so that $Pe_1=v$. Hence we may assume that 
$(\rho_{F,\lambda})^{{\rm ss}}=\varepsilon_1\oplus \rho'\oplus \varepsilon_2 \subset M_P(\F_\lambda')$ where $\varepsilon_i: G_\Q\lra {\F_\lambda'}^\times (i=1,2)$ is a character and 
$\rho'$ is a 2-dimensional mod $\ell$ representation of $G_\Q$. Let ${\F_\lambda}''=\F_\lambda(\varepsilon_1,\varepsilon_2 )$ and $\kappa={\F_\lambda}''\cap \F_\lambda'$. 
Then $\kappa$ is of degree at most 4 over $\F_\lambda$. 
Applying Lemma 6.13 of \cite{d&s} to $\rho'$, there exists a matrix $P\in GSp_4(\Bbb F_\lambda)$ so that 
$P^{-1}(\rho_{F,\lambda})^{{\rm ss}}P=\varepsilon_1\oplus \rho''\oplus \varepsilon_2\subset M_P(\kappa)$ where 
$\rho''$ is a 2-dimensional mod $\ell$ representation of $G_\Q$ over $\kappa$. 
It is the same in the case that $\rho_{F,\lambda}$ has a three dimensional subspace $V_3$ which is stable under the action of $G_\Q$ 
by taking the duality with respect to the symplectic pairing on $\rho_{F,\lambda}$ into account. 

Finally we consider the case that $\rho_{F,\lambda}$ has an 2-dimensional irreducible subspace $V_2$ which is stable under the action of $G_\Q$. 
Let $r$ be the dimension of the kernel of the linear map $V_2\lra V^\ast_2, v\mapsto \langle \ast,v \rangle$. 
It is easy to see that $r=1$ or 2. 

First we assume $r=2$. Fix a basis $\{v_1,v_2\}$ of $V_2$. One can easily find vectors $w_1,w_2\in V$ so that 
$\langle v_1,w_1\rangle=\langle v_2,w_2\rangle=1$ and $\langle w_1,w_2\rangle=0$ since $V_2\stackrel{\sim}{\lra} V^\ast_2$. 
Then we may assume that $V_2=\langle e_1,e_2\rangle$ or $V_2=\langle f_1,f_2\rangle$. 
In this case, we may have $(\rho_{F,\lambda})^{{\rm ss}}\subset M_P(\kappa')$ giving the claim by Lemma 6.13 of \cite{d&s} again. 
Here $\kappa'$ is a finite extension of $\F_\lambda$ with the degree at most 2.   

Next we assume $r=1$. Take a non-zero vector $v$ in the kernel of the map and denote by $v^\ast$ the dual basis vector of $v$ which is 
identified as a vector in $V$ by the pairing. 
Then $\langle v,v^\ast\rangle=1$. Take $w\in V_2$ (and denote by $w^\ast$ the dual vector of $w$) so that $\langle v,w\rangle=0$. 
This gives us $\langle v^\ast,w^\ast\rangle=0$. Hence $\{v,w,v^\ast,w^\ast\}$ makes the standard symplectic basis.  
Therefore one may have that $V_2=\langle e_1,f_1\rangle$ or $V_2=\langle e_2,f_2\rangle$. 
In this case, one has $(\rho_{F,\lambda})^{{\rm ss}}\subset H^{{\rm en}}(\kappa')$ giving the claim 
by Lemma 6.13 of \cite{d&s} again. 
\end{proof}

\section{Bounds of certain subgroups of $GSp_4(\F_{\ell^n})$}\label{subgroups1}
In this section, we will study the bounds of certain subgroups of $GSp_4(\F_{\ell^n})$ for odd prime $\ell$ and $n\ge 1$. 
For a finite set $X$, we denote by $|X|$, the cardinality of $X$.  

By imitating the strategy of \cite{d&s} for $GL_2(\F_\ell)$, we consider the following property of a subgroup $G$ of $GSp_4(\F_{\ell^n})$.

\begin{defin}\label{finite-subgroup}
Let $M$ and $\eta$ $(0<\eta<1)$ be positive constants.
$$
C(\eta,M):\ \mbox{there exists a subset $H$ of $G$ such that}\ 
\left\{\begin{array}{ll}
(i)\  |H|\ge (1-\eta)|G|, \\
(ii)\  |\{\det(1-hT)\in \F_\ell[T]|\ h\in H \}|\le M.
\end{array}\right. 
$$
\end{defin}
Then the following lemma is easy to prove.
\begin{lemma}\label{easy-lemma}(cf. the proof of Proposition 7.2 in \cite{d&s}) Let $G$ be a finite group with a subgroup $G'$ of index 2. 
 Then if $G$ satisfies $C(\eta,M)$, then $G'$ satisfies $C(2\eta,M)$. 
\end{lemma}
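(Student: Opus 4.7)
The plan is to take $H' := H \cap G'$ as the candidate subset of $G'$ witnessing the property $C(2\eta, M)$. The second condition of Definition 8.1 is then immediate, because the set of polynomials $\{\det(1-h'T) : h' \in H'\}$ is a subset of $\{\det(1-hT) : h \in H\}$, which has cardinality at most $M$ by hypothesis.

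For the first condition, I would count the complement. Since $[G:G'] = 2$, we have $|G'| = |G|/2$ and $|G \setminus G'| = |G|/2$. A direct set-theoretic estimate gives
\[
|H \cap G'| \;\geq\; |H| - |G \setminus G'| \;\geq\; (1-\eta)|G| - \tfrac{1}{2}|G| \;=\; \bigl(\tfrac{1}{2}-\eta\bigr)|G| \;=\; (1-2\eta)|G'|,
\]
which is exactly what is required for $H'$ to satisfy condition (i) of $C(2\eta, M)$.

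There is essentially no obstacle here: the argument is purely combinatorial and uses nothing beyond the index-$2$ hypothesis and inclusion–exclusion on cardinalities. The only point worth flagging is that one needs $2\eta < 1$ for the conclusion to be nontrivial; if $\eta \geq 1/2$ the statement is vacuous, since any subgroup trivially satisfies $C(\eta', M)$ for $\eta' \geq 1$ (take $H = \emptyset$, or observe that the bound on $|H|$ becomes nonpositive). This matches the usage in \cite{d&s}, where $\eta$ will be chosen small enough that $2\eta$ is still less than $1$ in the downstream applications.
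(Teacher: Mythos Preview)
Your proof is correct and follows the same approach as the paper: set $H' = H \cap G'$ and bound $|H'|$ from below. In fact your complement-counting estimate $|H'| \geq |H| - |G \setminus G'| \geq (1-\eta)|G| - \tfrac{1}{2}|G| = (1-2\eta)|G'|$ is more carefully stated than the paper's version, whose first inequality $|H'| \geq (1-\eta)|G|$ is written incorrectly (though the intended conclusion $(1-2\eta)|G'|$ is the same).
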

\begin{proof} Let $H$ be a subset of $G$ which satisfies the property $C(\eta,M)$. Let $H'=H\cap G'$. 
Then $|H'|\geq (1-\eta)|G|=(2-2\eta)|G'|\geq (1-2\eta)|G'|$. The second condition is obvious.
\end{proof}

We denote by $M_\ast$, the Levi factor of the parabolic subgroup $\ast\in \{B,P,Q\}$. 
Recall $M_B(\F_{\ell^n})$, $M_P(\F_{\ell^n})$, and $M_Q(\F_{\ell^n})$ from Section 2. 
Recall also $H^{{\rm en}}(\F_{\ell^n})$ from Section 7.

For a subgroup $G$ of $GSp_4(\F_\ell)$, we say $G$ is semisimple if the identity representation $G\hookrightarrow GSp_4(\F_{\ell^n}) \hookrightarrow  GL_4(\F_{\ell^n})$ is semisimple.

We need the classification of all semisimple subgroups of $GSp_4(\F_{\ell^n}),\ n\ge 1$. All of them are 
the semisimple parts of groups taken from \cite{dieulefait} and \cite{dieulefait1} though 
some of explicit forms are not given there. 

\begin{lemma}\label{semisimple-subgroup}
Let $G$ be a semisimple subgroup of $GSp_4(\F_{\ell^n}),\ n\ge 1$. Then up to conjugacy, $G$ is one of the following:

\hspace{0.45in} (reducible cases)

\begin{enumerate}
\item $G$ is contained in $M_\ast(\F_{\ell^n})$ for some $\ast\in \{B,P,Q\}$. 
\item $G$ is contained in $H^{{\rm en}}(\F_{\ell^n})$. 

\smallskip 
(irreducible cases and $n=1$)

\item $G$ contains $Sp_4(\F_\ell)$, 
\item $G$ is contained in ${\rm Sym}^3GL_2(\F_\ell)$, 
\item $G$ is contained in  $H_\ell:=\langle M_P(\F_\ell), \begin{pmatrix}
 0& I_2\\
 I_2& 0
\end{pmatrix} \rangle$, but $G\not\subset M_P(\F_\ell)$. 

\item $G$ is contained in  $H_\ell:=\langle H^{{\rm en}}(\F_\ell), \begin{pmatrix}
 0& I_2\\
 I_2& 0
\end{pmatrix} \rangle$, but $G\not\subset H^{{\rm en}}(\F_\ell)$. 

\item Fix a quadratic non-residue $u\in \F_\ell$ and a square root $\sqrt{u}\in\F_{\ell^2}$ of $u$.  Choose a solution $(a,b)\in \F^\times_\ell\times \F^\times_\ell$ so that $a^2+b^2=u$. Then 
for $a_i=x_i+y_i\sqrt{u}\in \F^\times_{\ell^2},\ x_i,y_i\in \F_\ell\ (i=1,\ldots,4)$, let $S(a_i)=\begin{pmatrix}
 x_i+ay_i& b y_i \\
 by_i& x_i-a y_i
\end{pmatrix}$. Note that ${}^tS(a_i)=S(a_i)$ and 
$\begin{pmatrix}
 S(a_1)& S(a_2)\\
 S(a_3)& S(a_4)
\end{pmatrix}\in GSp_4(\F_\ell)$ if and only if $a_1 a_4-a_2 a_3\in \F^\times_\ell$. Then $G$ is contained in 
$$\Bigg\langle \Bigg\{\begin{pmatrix}
 S(a_1)& S(a_2)\\
 S(a_3)& S(a_4)
\end{pmatrix}\in GSp_4(\F_\ell) \ \Bigg|\ a_i\in \F^\times_{\ell^2} \Bigg\}, \begin{pmatrix}
 0& I_2\\
 I_2& 0
\end{pmatrix} \Bigg\rangle\simeq S_\ell\rtimes \{\pm 1\},
$$
where $S_\ell:=\left\{ \begin{pmatrix} a_1 &a_2\\a_3&a_4
\end{pmatrix}\right\}=\{g\in GL_2(\F_{\ell^2})\ |\ \det(g)\in \F^\times_\ell\}$.

\item Fix a quadratic non-residue $u\in \F_\ell$ and choose a solution $\lambda\in \F^\times_{\ell^2}$ so that $\lambda^2=u$. 
Then $G$ is contained in 
$$\Bigg\langle \Bigg\{ u(A,B)=\begin{pmatrix}
 A& B\\
 u B& A
\end{pmatrix}\in GSp_4(\F_\ell)   \Bigg\},  \left(\begin{array}{cc}
 0& I_2\\
 I_2& 0
\end{array}
\right)\Bigg\rangle\simeq GU_2(\F_{\ell})\rtimes \{\pm 1\}$$
where $ GU_2(\F_{\ell})=\{g\in GL_2(\F_{\ell^2})\ |\ \sigma( {}^t g)g=\nu I_2,\ \nu\in \F^\times_\ell  \}$ and $\sigma$ is 
the generator of ${\rm Gal}(\F_{\ell^2}/\F_\ell)$. 
The matrices $A,B\in M_2(\F_\ell)$ satisfy $A {}^tA-u B {}^tB=\nu I_2,\ \nu \in\F^\times_\ell$ and $A {}^tB-B {}^tA=0$. 
Then the above isomorphism is given by $u(A,B)\mapsto A+\lambda B$,  

\item $G$ is contained in 
$\Bigg\{\begin{pmatrix}
 av & 0& bv & 0\\
 0  & az& 0 & bz \\
 cv &0 &   dv & 0\\
 0& cz & 0 & dz
\end{pmatrix}\in GSp_4(\F_\ell)\Bigg\}\bigcup 
\Bigg\{\begin{pmatrix}
 0   & av & 0& bv  \\
 az & 0 & bz & 0 \\
 0  & cv &0 &   dv \\
 cz & 0 & dz & 0
\end{pmatrix}\in GSp_4(\F_\ell)\Bigg\}$,
which is realized by taking the tensor product of $GL_2(\F_\ell)$ and a dihedral subgroup 
$D= \left(\begin{array}{cc}
 \ast& 0\\
 0& \ast 
\end{array}
\right)\bigcup \left(\begin{array}{cc}
 0& \ast\\
 \ast& 0 
\end{array}
\right)$ of $GL_2(\F_\ell)$. 
  
\item We denote by $\overline{G}$, the image of $G$ in $PGSp_4(\F_\ell)$. Then $\overline{G}$ is isomorphic to $A_6,\ S_6$, or $A_7$,
 or there exists a normal abelian subgroup $E$ of $\overline{G}$ with order $16$ 
so that $\overline{G}/E\simeq A_5$ or $S_5$. 
\end{enumerate}
\end{lemma}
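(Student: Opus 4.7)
The plan is to analyze $G$ via the dichotomy of whether the natural $4$-dimensional inclusion $G \hookrightarrow GL_4(\F_{\ell^n})$ is reducible. The reducible cases (1) and (2) are handled by essentially the same argument used in Proposition~7.3 to pin down the reducible images of $\rho_{F,\lambda}$: a case analysis on the possible $G$-stable subspaces of $V = \F_{\ell^n}^4$ together with the symplectic pairing $J$ forces $G$ into the Levi of a Borel, Siegel, or Klingen parabolic, depending on the dimensions of stable flags and using that any $1$-dimensional subspace is automatically isotropic for $J$ (this is case (1)); a $2$-dimensional stable subspace which is non-degenerate for $J$ yields the endoscopic Levi $H^{{\rm en}}$ of case (2). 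Semisimplicity of $G$ supplies the required complementary summands, and Lemma~6.13 of \cite{d&s} lets us descend the field of definition of each block to its minimal field so that the group lies inside the indicated $\F_{\ell^n}$-points.

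For the irreducible cases (3)--(10) it suffices to take $n=1$, since an absolutely irreducible representation of $G$ into $GL_4(\bF_\ell)$ is defined, up to conjugation in $GL_4(\bF_\ell)$, over the field generated by its traces, and this field lies in $\F_\ell$ because $GSp_4$ is split; the residual case where the $\F_{\ell^n}$-rep is irreducible but not absolutely irreducible produces via restriction of scalars precisely the subfield patterns recorded in cases (7)--(8). With this reduction, the strategy is to apply the Aschbacher-type trichotomy for irreducible subgroups of $GSp_4(\F_\ell)$ carried out by Dieulefait in \cite{dieulefait}, \cite{dieulefait1}: imprimitive subgroups with a $G$-permuted $2+2$ block decomposition give cases (5) or (6) according as the two blocks are isotropic or non-degenerate under $J$; imprimitive with $1+1+1+1$ decomposition or a tensor product structure $V = W_1 \otimes W_2$ gives case (9); subfield subgroups preserving an $\F_{\ell^2}$-structure on $V$ with a symmetric (resp.\ Hermitian) form descending from $J$ give case (7) (resp.\ case (8)); the principal $SL_2$-embedding via ${\rm Sym}^3$ yields case (4); and the remaining primitive irreducible subgroups are either the full $Sp_4(\F_\ell)$ of case (3) or one of the exceptional almost-simple configurations of case (10). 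In each class, the explicit matrices claimed in the lemma come from direct computation: in (7) and (8), the embedding $M_2(\F_{\ell^2}) \hookrightarrow M_4(\F_\ell)$ via the $\F_\ell$-basis $\{1, \sqrt{u}\}$ exhibits the matrices $S(a_i)$ and $u(A,B)$ as elements of $GSp_4(\F_\ell)$, and the block swap $\begin{pmatrix} 0 & I_2 \\ I_2 & 0 \end{pmatrix}$ implements the nontrivial element of ${\rm Gal}(\F_{\ell^2}/\F_\ell)$; in (5), (6), (9), the same block swap normalizes the Levi or tensor factor, producing the stated semidirect extensions.

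The main obstacle will be the primitive almost-simple case (10). Here one must classify finite subgroups $\bar G \subset PGSp_4(\F_\ell) \simeq PSO_5(\F_\ell)$ whose preimage in $GSp_4(\F_\ell)$ is semisimple and does not fall under any of the preceding classes, and then determine which admit a faithful $4$-dimensional symplectic lift over $\F_\ell$ via their Schur multipliers; this isolates $A_6$, $S_6$, $A_7$ together with the exotic extensions of $A_5$ or $S_5$ by the elementary abelian $E_{16}$ of order $16$. Rather than reprove this from scratch, the plan is to import it from \cite{dieulefait}, \cite{dieulefait1}, as the authors of the present paper explicitly defer the conceptual simplification of this step to the sequel \cite{KY}.
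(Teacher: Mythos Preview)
The paper does not supply its own proof of this lemma; immediately before the statement it remarks that the groups listed are ``the semisimple parts of groups taken from \cite{dieulefait} and \cite{dieulefait1} though some of explicit forms are not given there,'' and no argument follows. Your outline is consistent with this: the reducible cases (1)--(2) are handled by the same symplectic-flag analysis the paper already carries out in Proposition~\ref{mod-p-galois}, and for the irreducible cases you correctly organize the possibilities via an Aschbacher-type trichotomy and defer the primitive almost-simple residue (10) to Dieulefait, which is precisely what the paper does.

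One small point of phrasing: you describe cases (7)--(8) as arising from the ``irreducible but not absolutely irreducible'' situation via restriction of scalars. In the lemma these are listed under the irreducible cases for $n=1$, and the $4$-dimensional $\F_\ell$-representation there \emph{is} absolutely irreducible; what is being preserved is an $\F_{\ell^2}$-linear (resp.\ Hermitian) structure on $V=\F_\ell^4$, i.e.\ Aschbacher's extension-field class $\mathcal{C}_3$. The genuinely ``irreducible but not absolutely irreducible'' phenomenon for $n>1$ is handled separately in Proposition~\ref{mod-p-galois}(2) and in the product case $\prod_{\tau}{}^\tau GL_n(\F_{\ell^m})$ of Proposition~\ref{main-prop-finite-group}. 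This does not affect the correctness of your plan, only the labeling.
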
 

We prove the following key proposition by a case by case analysis with the help of the above Lemma.
\begin{prop}\label{main-prop-finite-group} For positive constants $M$ and $\eta,\ (0<\eta<\frac{1}{2})$, there exists a constant $A=A(\eta,M)$ such that 
for every rational odd prime $\ell$ and every semisimple subgroup $G$ of 
$M_\ast(\F_{\ell^4}),\ \ast\in\{B,P\}$, $M_Q(\F_{\ell^2}),\ H^{{\rm en}}(\F_{\ell^2})$, or $GSp_4(\F_\ell)$, or
$\ds\prod_{\tau\in {\rm Gal}(\F_{\ell^m}/\F_\ell)} {}^\tau(GL_n(\F_{\ell^m}))\subset GL_4(\F_\ell)$ with $nm=4, n\not=4$, 
satisfying $C(\eta,M)$, we have $|G|<A$. 
\end{prop}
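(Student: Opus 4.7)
The plan is a case-by-case analysis over the list of group types in the statement, unified by the following counting principle. If $G$ satisfies $C(\eta,M)$ with witness set $H\subset G$, then $H$ is covered by at most $M$ fibres of the characteristic polynomial map $\chi:G\to\F_\ell[T]$, $g\mapsto \det(I_4-gT)$. It suffices in each case to bound the maximum fibre size $N$ by an absolute constant (depending only on the case, not on $\ell$); then $|G|\le NM/(1-\eta)$ follows immediately.

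First I would dispose of the toroidal case $G\subset M_B(\F_{\ell^4})$: since $M_B$ is abelian, two of its elements share a $GL_4$-characteristic polynomial only if they lie in the same $W$-orbit, so $N\le |W|=8$. For $G$ inside $M_P(\F_{\ell^4})$, $M_Q(\F_{\ell^2})$, $H^{\mathrm{en}}(\F_{\ell^2})$, or one of the induced products $\prod_\tau {}^\tau GL_n(\F_{\ell^m})$ with $nm=4,\ n\ne 4$, a semisimple element decomposes block-diagonally as a product of semisimple elements of smaller $GL_m(\F_q)$'s, and its $GSp_4$-characteristic polynomial factors accordingly. A bounded number of $GSp_4$-characteristic polynomials therefore pins down each $GL_m$-block up to $GL_m$-conjugacy with a bounded ambiguity (from block orderings and the symplectic duality $A\leftrightarrow u\,{}^tA^{-1}$); the argument then reduces to the $GL_n(\F_q)$-version of the statement, which is Proposition 7.2 of \cite{d&s} for $n=2$ and trivial for $n=1$, combined with Lemma \ref{easy-lemma} to descend to a bounded-index subgroup of the ambient Levi whenever necessary.

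The substantive case is $G\subset GSp_4(\F_\ell)$, which I would handle by invoking the classification of Lemma \ref{semisimple-subgroup}. Cases (i) and (ii) reduce to the parabolic Levi cases just treated. For each ``small'' irreducible case (iv)--(ix), the subgroup admits a specific structural realisation (image of $\mathrm{Sym}^3$ from $GL_2(\F_\ell)$, or a normaliser of a tensor-product or induced torus, etc.), in each of which the generic element has characteristic polynomial determined by only finitely many $\F_\ell$-parameters, so the same bounding principle applies after cutting off a bounded-index subgroup via Lemma \ref{easy-lemma}. The exotic case (x) is handled by observing that the projective image $\bar G\subset PGSp_4(\F_\ell)$ lies in a fixed finite list ($A_6$, $S_6$, $A_7$, or an extension of $A_5$ or $S_5$ by $E_{16}$) of absolutely bounded order, while the scalar centre $G\cap Z(GSp_4)(\F_\ell)$ meets $H$ in at most $M$ elements (distinct scalars produce distinct char polys $(X-a)^4$), making $|G|$ absolutely bounded.

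The main obstacle is case (iii) of Lemma \ref{semisimple-subgroup}: $G\supseteq Sp_4(\F_\ell)$, whence $|G|$ grows like $\ell^{10}$. The argument here is a coset-wise count: the similitude $\nu(g)$ is determined by $\chi(g)$, so the $M$ fibres of $\chi$ partition among the $[G:Sp_4(\F_\ell)]$ cosets of $Sp_4(\F_\ell)$ in $G$. On each coset, distinct characteristic polynomials correspond (generically) to distinct $GSp_4$-conjugacy classes of regular semisimple elements, of which there are $\Theta(\ell^2)$, each class of size at most $|G|/|T|\sim \ell^8$. Hence $M$ characteristic polynomials occupy at most $\sim M\ell^8$ elements per coset, so the proportion of $G$ covered is at most $M/\ell^2$ up to a $(1+o(1))$ factor, which is strictly less than $1-\eta$ once $\ell^2>M/(1-\eta)$. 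Carrying this out rigorously --- controlling the non-regular semisimple contribution, accounting for non-split tori, and verifying the constants --- is the technically most involved portion and the essential new content of the argument.
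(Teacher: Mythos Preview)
Your overall strategy---case-by-case via Lemma~\ref{semisimple-subgroup}, bounding the maximal fibre $N$ of the characteristic-polynomial map and concluding $|G|\le NM/(1-\eta)$---is exactly the paper's approach. The paper likewise reduces the Levi and tensor/induced cases to the $GL_2$ result of Deligne--Serre, and for case~(iii) obtains the same $O(\ell^8)$ fibre bound (citing Shinoda's explicit conjugacy tables rather than arguing through regular semisimple classes as you propose; either route works).

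There is one genuine gap in your sketch for case~(x). Knowing that $G\cap Z$ meets $H$ in at most $M$ elements does \emph{not} bound $|G\cap Z|$: the complement $G\setminus H$ has size up to $\eta|G|=\eta|\bar G|\,|G\cap Z|$, and since $\eta|\bar G|$ can exceed $1$ (e.g.\ $|\bar G|$ as large as $2520$ with $\eta$ only assumed $<\tfrac12$), you cannot solve for $|G\cap Z|$. The correct argument (which the paper uses) is to bound the fibre directly: the characteristic polynomial determines the determinant, so each fibre lies in a single coset of $G\cap SL_4(\F_\ell)$, and $|G\cap SL_4(\F_\ell)|\le 4|\bar G|\le 10080$ since the kernel of $G\cap SL_4\to\bar G$ is contained in $\mu_4$. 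Then $(1-\eta)|G|\le 10080\,M$.

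A smaller point: your reduction of the $H^{\mathrm{en}}$ case to ``the $GL_2$ version of the statement'' is not quite what the paper does, and as stated it is unclear how condition $C(\eta,M)$ would transfer to the projections ${\rm pr}_i(G)$. The paper instead classifies each ${\rm pr}_i(G)$ by the four Deligne--Serre types (a)--(d) and, for each of the resulting pairs, bounds the $GL_4$-fibre size inside $G$ directly (using that at most $\ell^{2n}+\ell^n$ elements of $GL_2(\F_{\ell^n})$ share a characteristic polynomial). This is straightforward but does require the subcase enumeration rather than a single reduction step.
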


\begin{proof}
Case (1)-$M_B(\F_{\ell^4})$: At most 8 ($=|W_G|$ where $W_G$ is the Weyl group of $G$) elements of $M_B(\F_\ell)$ have a given characteristic polynomial. 
The hypothesis $C(\eta,M)$ (with $0<\eta<1$) gives 
$$(1-\eta)|G|\le |H|\le 8|\{\det(1-hT)\in \F_\ell[T]\ |\ h\in H\}|\le 8M.
$$
giving a bound $|G|<\frac{8M}{1-\eta}$.
\medskip

Case (1)-$M_P(\F_{\ell^4})$: 
In this case,  the conjugacy classes are isomorphic to the product of the conjugacy classes of $GL_2(\F_{\ell^n})$ and $\F^\times_{\ell^n}$. Hence the similitude 
does not essentially affect the result. 
It is easy to generalize Proposition 7.2 of \cite{d&s} to the case $GL_2(\F_{\ell^n})$ for any $n\ge 1$. 
Let $A$ be the analogous constant of Proposition 7.2 of \cite{d&s} in the case $GL_2(\F_{\ell^4})$. 
Then we have $|G|\le 2A$ by taking the action of the element 
$ \left(\begin{array}{cc}
 0& I_2\\
 I_2& 0 
\end{array}
\right)$ into account.  

Case (1)-$M_Q(\F_{\ell^2})$ is the same as well. 

\medskip
 
Case (2): Let ${\rm pr}_i:H^{{\rm en}}(\F_{\ell^2})\simeq \{(A,B)\in GL_2(\F_{\ell^2})\times GL_2(\F_{\ell^2})|\ \det A=\det B\}\stackrel{{\rm pr}_i}{\lra} GL_2(\F_{\ell^2}) $ be  
 the $i$-th projection for $i=1,2$. Note that there is an exact sequence 
\begin{equation}\label{go}
1\lra SL_2(\F_{\ell^2})\times SL_2(\F_{\ell^2})\lra H^{{\rm en}}(\F_{\ell^2})\lra \F^\times_{\ell^2}\lra 1
\end{equation}
 by an obvious way. 
Then  ${\rm pr}_i(G)$ satisfies one of the conditions (a), (b), (c), or (d) of Proposition 7.2 of \cite{d&s}. So essentially there are at most 6 possibilities of $G$. 
For simplicity, we say ($\ast_1$)-($\ast_2$) case for $\ast_1,\ast_2\in\{a,b,c,d\}$  if ${\rm pr}_1(G)$ satisfies ($\ast_1$) and ${\rm pr}_2(G)$ satisfies $(\ast_2)$. 
We recall the following fact which is easy to prove:
\begin{equation}\label{fact}
\mbox{There are at most $\ell^{2n}+\ell^n$ elements of $GL_2(\F_{\ell^n})$ which have a given characteristic polynomial.}
\end{equation}
(see the proof of Proposition 7.2 of \cite{d&s} for $n=1$).

(a)-(a) case:  Let $r:=[G:SL_2(\F_{\ell^2})\times SL_2(\F_{\ell^2})]$. Then $|G|=r\ell^4(\ell^4-1)^2$. 
It is easy to see that the characteristic polynomial of any element $g$ of $G$ which corresponds to $(A,B)\in GL_2(\F_{\ell^2})\times 
GL_2(\F_{\ell^2}),\ \det A=\det B$, is of the form 
$$\Phi_g(T)=\Phi_A(T)\Phi_B(T).$$ 
By (\ref{fact}),  at most $8(\ell^4+\ell^2)^2$ elements of $G$ have a given characteristic polynomial. 
If $G$ satisfies $C(\eta,M)$, one has  
$$(1-\eta)r\ell^4(\ell^4-1)^2\le 8(\ell^4+\ell^2)^2M,$$
giving $$(1-\eta)r\ell\le (1-\eta)r(\ell^2-1)^2\le 8M\ {\rm and}\ \ell\le \frac{8M}{1-\eta}.$$
Hence we have the bound of $|G|$ which depends only on $M$ and $\eta$. 

(a)-(b) case: There exists a subgroup $K$ of $\F^\times_{\ell^2}$ such that $$G=\Bigg\{g=\Bigg(A, \begin{pmatrix}
 a& 0\\
 0& a^{-1}\det A 
\end{pmatrix} \bigg)\Bigg|\ A\in {\rm pr}_1(G),\ a\in K \Bigg\}.
$$ 
Let $r=[ {\rm pr}_1(G):SL_2(\F_{\ell^2})]$. Then $|G|=|K|r\ell^2(\ell^4-1)$. 
The characteristic polynomial of any element $g$ of $G$ is of the form 
$$\Phi_g(T)=\Phi_A(T)(T-a)(T-a^{-1}\det A).
$$
Then by (\ref{fact}), at most $2(\ell^4+\ell^2)$ elements of $G$ have a given characteristic polynomial. 
If $G$ satisfies $C(\eta,M)$, one has  
$$(1-\eta)|K|r\ell^2(\ell^4-1)\le 2(\ell^4+\ell^2)M,$$
giving $$(1-\eta)r|K|\ell\le (1-\eta)r|K|(\ell^2-1)\le 2M\ {\rm and}\ \ell\le \frac{2M}{1-\eta}.$$
Hence we have the bound of $|G|$ which depends only on $M$ and $\eta$. 


(a)-(c) case: This case is reduced to the case (a)-(b) by Lemma \ref{easy-lemma}. 

(a)-(d) case: In this case, the group  $K={\rm pr_2}(G)\cap SL_2(\F_{\ell^2})$ is of order at most 120, whence has at most 120 elements of 
the given determinant. 
Let $r=[ {\rm pr}_1(G):SL_2(\F_{\ell^2})]$. Then $|G|=|K|r\ell^2(\ell^4-1)$ by (\ref{go}).  
Then by (\ref{fact}), at most $120(\ell^4+\ell^2)$ elements of $G$ have a given characteristic polynomial. 
If $G$ satisfies $C(\eta,M)$, one has  
$$(1-\eta)|K|r\ell^2(\ell^4-1)\le 120(\ell^4+\ell^2)M,$$
giving $$(1-\eta)r|K|\ell\le (1-\eta)r|K|(\ell^2-1)\le 120M\ {\rm and}\ \ell\le \frac{120M}{1-\eta}.$$
Hence we have the bound of $|G|$ which depends only on $M$ and $\eta$. 

(b)-(b) case: Any element of $G$ is of the form $\Bigg( \begin{pmatrix}
 a& 0\\
 0& a^{-1}c 
\end{pmatrix},\, \begin{pmatrix}
 b& 0\\
 0& b^{-1}c  
\end{pmatrix} \Bigg)$. Hence at most $8$ elements of $G$ have a given characteristic polynomial. 
Then one has $|G|\le \frac{8M}{1-\eta}.$

(b)-(c) case: This case is reduced to the case (b)-(b) by Lemma \ref{easy-lemma}. 

(b)-(d) case:  By the analysis of (a)-(d) case, we see that at most $120\times 2=240$ elements of $G$ have a given characteristic polynomial. 
Hence we have $|G|\le \frac{240M}{1-\eta}.$

(c)-(d) case: This case is reduced to the case (b)-(d) by Lemma \ref{easy-lemma}. 

\medskip
For the case $\ds\prod_{\tau\in {\rm Gal}(\F_{\ell^m}/\F_\ell)} {}^\tau(GL_n(\F_{\ell^m}))\subset GL_4(\F_\ell)$ with $nm=4, n\not=4$,
it is reduced to 
the case (1)-$M_P(\F_{\ell^4})$. So we omit the proof. 

\medskip
Case (3): Let $r:=[G:Sp_4(\F_\ell)]$. Then $|G|=r\ell^4(\ell^4-1)(\ell^2-1)$. By Table 1 and Table 2 of \cite{shinoda}, one can compute 
 the number of elements of $G$ which have a given characteristic polynomial. As a result, such number is at most $C\ell^8$ for some positive constant $C$ 
which is independent of $\ell$.  
For instance, if the semi-simple part of $g\in G$ is ${\rm diag}(a,a,a,a),\ a\in \F^\times_\ell$, from the centralizer of 
the elements of types $A_0,\ A_1,\ A_{21}, A_{22}$, and $A_{3}$ of Table 2 in \cite{shinoda}, the number of elements of $G$ with the 
characteristic polynomial $(T-a)^4$ is computed as the sum of 
orbits of each types:
$$\frac{|GSp_4(\F_\ell)|}{|GSp_4(\F_\ell)|}+\frac{|GSp_4(\F_\ell)|}{\ell^4(\ell-1)(\ell^2-1)}+\frac{|GSp_4(\F_\ell)|}{2\ell^3(\ell-1)^2}+\frac{|GSp_4(\F_\ell)|}{2\ell^3(\ell^2-1)}
+\frac{|GSp_4(\F_\ell)|}{\ell^2(\ell-1)}=\ell^8-\frac{1}{2}\ell^6+\ell^5-\frac{1}{2}\ell^4+\frac{1}{2}\ell^2-\ell+\frac{1}{2}.$$
If $G$ satisfies $C(\eta,M)$, one has  
$$(1-\eta)r\ell^4(\ell^4-1)(\ell^2-1)\le C\ell^8M,$$
giving 
$$(1-\eta)r\ell\le (1-\eta)r(\ell^2-1)\le CM\ {\rm and}\ \ell\le \frac{CM}{1-\eta}.
$$
Hence we have the bound of $|G|$ which depends only on $M$ and $\eta$. 
 
Case (4): It is reduced to the case $GL_2(\F_\ell)$. 
 
Cases (5) and (6): These cases are reduced to the cases (1)-$M_P$ and (2) for $n=1$ by Lemma \ref{easy-lemma}. 
 
Case (7): Let
 $S(A):=\begin{pmatrix}
 S(a_1)& S(a_2)\\
 S(a_3)& S(a_4)
\end{pmatrix}$ 
for 
$A=\begin{pmatrix}
 a_1 & a_2\\
 a_3 & a_4
\end{pmatrix}\in S_\ell$. Then it is easy to see that $\Phi_{S(A)}(T)=\Phi_A(T)\sigma(\Phi_A(T))$ where 
$\Phi_\ast$ means the characteristic polynomial of $\ast$ and $\sigma$ is 
the generator of ${\rm Gal}(\F_{\ell^2}/\F_\ell)$. As in the proof of the reducible case (2) (replacing the base field by $\F_{\ell^2}$), we have three possibilities for $G$. 
We give a proof for the case when $G\cap S_{\ell}$ contains $SL_2(\F_{\ell^2})$. 
The other cases are similar. 

Let $r=[G:SL_2(\F_{\ell^2})]$ so that $|G|=r\ell^2(\ell^4-1)$.  
Then by (\ref{fact}), at most $4(\ell^4+\ell^2)$ elements of $G$ have a given characteristic polynomial. 
Here the factor $4$ comes from the orders of $\{\pm 1\}$ and ${\rm Gal}(\F_{\ell^2}/\F_\ell)$. 
If $G$ satisfies $C(\eta,M)$, one has  
$$(1-\eta)r\ell^2(\ell^4-1)\le 4(\ell^4+\ell^2)M,$$
giving $$(1-\eta)r\ell\le (1-\eta)r(\ell^2-1)\le 8M\ {\rm and}\ \ell\le \frac{8M}{1-\eta}.$$
Hence we have the bound of $G$ which depends only on $M$ and $\eta$.   
 
Case (8): Let $U:=A+\lambda B$. 
Then it is easy to see that $\Phi_{g(A,B)}(T)=\Phi_U(T)\sigma(\Phi_U(T))=\Phi_U(T)\Phi_{\sigma(U)}(T)$. 
Since $G$ is irreducible, 
the composition $G\cap GU_2(\F_{\ell})\hookrightarrow GL_2(\F_{\ell^2})$ is also irreducible and it has 
 three possibilities as in case (2). We give a proof for the case when $G\cap GU_2(\F_{\ell})$ contains $SL_2(\F_{\ell^2})\cap GU_2(\F_{\ell})=SU_2(\F_\ell)$. 
The other cases are similar.  Note that $SU_2(\F_\ell)\simeq SL_2(\F_\ell)$ 
(the isomorphism is considered in $GL_2(\bF_\ell)$.), and hence 
$|SL_2(\F_{\ell^2})\cap GU_2(\F_{\ell})|=\ell(\ell^2-1)$. 
As in the case $GL_2(\F_\ell)$, it is not so hard to show that the number of elements in $GU_2(\F_\ell)$ with the 
given polynomial is $\ell^2+\ell,\ \ell^2$, or $\ell^2+\ell$ as the polynomial in question has 2, 1, or 0 roots in $\Bbb F_\ell$, resp. 
Let $r=[G:SU_2(\F_\ell)]$ so that $|G|=r\ell(\ell^2-1)$.  
Then at most $4(\ell^4+\ell^2)$ elements of $G$ have a given characteristic polynomial. 
Here the factor $4$ comes from the orders of $\{\pm 1\}$ and ${\rm Gal}(\F_{\ell^2}/\F_\ell)$. 
If $G$ satisfies $C(\eta,M)$, one has  
$$(1-\eta)r\ell(\ell^2-1)\le 4(\ell^2+\ell)M,$$
giving $$(1-\eta)r\ell\le 8M\ {\rm and}\ \ell\le \frac{8M}{1-\eta}.$$
Hence we have the bound of $G$ which depends only on $M$ and $\eta$.   

Case (9): Since $G$ is contained in the tensor representation $GL_2(\F_\ell)\otimes D$,  
where $D$ is a dihedral subgroup of $GL_2(\F_\ell)$, it is reduced to the case $GL_2(\F_\ell)$ by Lemma \ref{easy-lemma}. So we omit the proof.  
 
Case (10): Among the finite groups appearing in case (10), $A_7$ is the largest: $|A_7|=2520$. 
 The group $G\cap SL_4(\F_\ell)$ is of order at most $4\times 2520$, whence $G$ has at most $10080$ elements with the 
given characteristic polynomial. 
If $G$ satisfies $C(\eta,M)$, one has  
$$(1-\eta)|G|\le 10080M,$$
giving the bound of $G$. This completes the proof.   
\end{proof}

\section{Proof of the Main Theorem}
In this section we give a proof of the main theorem (Theorem \ref{artin-rep}). 
Let $\pi_F=\pi_{\infty}\otimes \otimes_p' \pi_p$ be the cuspidal automorphic representation of $GSp_4(\A)$ attached to the real analytic Siegel cusp form of weight (2,1).
By Lemma \ref{CAP}, such $\pi_F$ is not a CAP representation. 
Let $\Q_F$ be the Hecke field of $F$, and let $L$ be the Galois closure of $\Q_F$. By the assumption (Rat), 
$L$ is a finite extension of $\Q$.  
We denote by $S_\pi$ the set of rational primes consisting of primes $p$ so that $\pi_p$ is ramified.  
Let $P_L$ be the set prime numbers $\ell$ which splits completely in $L$. 
For each $\ell\in P_L$, choose a finite place $\lambda_\ell$ of $L$ dividing $\ell$. 
By Conjecture \ref{galois}, there exists a continuous semi-simple representation 
$$\rho_\ell:=\rho_{\lambda_\ell}:G_\Q\lra GSp_4(\overline{\F}_{\ell})$$
which is unramified outside $S_\pi\cup\{\ell\}$, and 
$$\det(I_4-\rho_\ell({\rm Frob}_p)T)\equiv H_p(T) \, {\rm mod}\ \lambda_\ell,
$$
where $H_p(T)=1-a_{1,p}T+ (pa_{2,p}+(1+p^{-2})\varepsilon(p^{-1}) T^2-\varepsilon(p^{-1})a_{1,p}T^3+\varepsilon(p^{-1})^2T^4$.

Let $G_\ell:={\rm Im}\, \rho_\ell$. 
\begin{lemma}\label{condition}
For any $\eta,\, 0<\eta<1$, there exists a constant $M$ such that 
$G_\ell$ satisfies $C(\eta,M)$ for every $\ell\in P_L$.
\end{lemma}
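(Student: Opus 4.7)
The plan is to combine Proposition \ref{density} with the Chebotarev density theorem applied separately to each $\rho_\ell$, in direct analogy with \cite{d&s}. Fix $\eta\in(0,1)$. First I would apply Proposition \ref{density} to obtain a set $X_\eta$ of rational primes with ${\rm den.sup}(X_\eta)\le\eta$ such that $\{(\lambda(p),c(p))\,|\,p\notin X_\eta,\ p\nmid N\}\subset \mathcal{O}_L^2$ is finite (the proof of Proposition \ref{density} gives finiteness of precisely this pair). Using the identity $\lambda(p)^2-\lambda(p^2)-\varepsilon(p^{-1})p^{-1}=\varepsilon(p^{-1})(c(p)+1)$ established in Section \ref{ran-sel}, the Hecke polynomial rewrites as
$$H_p(T)=1-\lambda(p)T+\varepsilon(p^{-1})(c(p)+1)T^2-\varepsilon(p^{-1})\lambda(p)T^3+\varepsilon(p^{-1})^2T^4,$$
whose coefficients depend only on $\lambda(p)$, $c(p)$, and $\varepsilon(p^{-1})$. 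Since $\varepsilon$ is of finite order, the set $\mathcal{P}:=\{H_p(T)\,|\,p\notin X_\eta,\ p\nmid N\}\subset \mathcal{O}_L[T]$ is finite; set $M:=|\mathcal{P}|$, a constant depending only on $\eta$ and not on $\ell$.

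Next, for each $\ell\in P_L$ I would define
$$H_\ell:=\bigcup_{p\notin X_\eta\cup S_\pi\cup\{\ell\}}[\rho_\ell({\rm Frob}_p)]\subset G_\ell,$$
the union of $G_\ell$-conjugacy classes of Frobenius elements at good primes outside $X_\eta$. By Conjecture \ref{galois}, for every $g\in H_\ell$ one has $\det(I_4-gT)\equiv H_p(T)\,({\rm mod}\,\lambda_\ell)$ for some $p\notin X_\eta$; since reducing modulo $\lambda_\ell$ cannot increase the number of distinct polynomials, $|\{\det(I_4-gT)\,|\,g\in H_\ell\}|\le M$. This verifies condition (ii) of $C(\eta,M)$.

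Finally, for condition (i), set $A_\ell:=G_\ell\setminus H_\ell$, which is a union of conjugacy classes by construction. Every prime $p$ unramified in $\rho_\ell$ with $\rho_\ell({\rm Frob}_p)\in A_\ell$ must lie in $X_\eta$. By the Chebotarev density theorem applied to the finite Galois extension cut out by $\rho_\ell$, the set $\{p\,|\,\rho_\ell({\rm Frob}_p)\in A_\ell\}$ has analytic density exactly $|A_\ell|/|G_\ell|$. Up to the finite set $S_\pi\cup\{\ell\}$ of ramified primes, this set is contained in $X_\eta$, whose upper analytic density is at most $\eta$. Hence $|A_\ell|/|G_\ell|\le\eta$, so $|H_\ell|\ge(1-\eta)|G_\ell|$, and $G_\ell$ satisfies $C(\eta,M)$.

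The main technical subtlety is the bridge between the analytic (Dirichlet-style) density on the prime side furnished by Proposition \ref{density} and the uniform equidistribution on the Galois side provided by Chebotarev; both are expressed in the same analytic-density framework, so the comparison is clean. The remaining delicate point is that $M$ must be independent of $\ell$: this is secured by writing $H_p(T)$ with $\mathcal{O}_L$-coefficients depending only on $\lambda(p)$, $c(p)$, and the finite-order character $\varepsilon$, rather than in the original form which involves the $p$-dependent quantity $pa_{2,p}+(1+p^{-2})\varepsilon(p^{-1})$.
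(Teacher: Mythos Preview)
Your proposal is correct and follows essentially the same approach as the paper: define $H_\ell$ as the union of conjugacy classes $[\rho_\ell({\rm Frob}_p)]$ for $p\notin X_\eta$, bound the number of characteristic polynomials by the finite set of Hecke polynomials coming from Proposition~\ref{density}, and use Chebotarev on the complement to get $|H_\ell|\ge(1-\eta)|G_\ell|$. Your explicit rewriting of $H_p(T)$ in terms of $\lambda(p)$, $c(p)$, and $\varepsilon(p^{-1})$ is a slight elaboration (the paper simply invokes the equivalence with finiteness of Satake parameters stated in Proposition~\ref{density}), but the argument is otherwise identical.
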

\begin{proof}
By Proposition \ref{density}, if we let $\mathcal{M}:=\{ H_p(T)\, |\ p\not\in X_\eta\},$
then $\mathcal{M}$ is a finite set. Let $M:=|\mathcal{M}|$ which will be a desired constant. 
Let us consider the subset of  $G_\ell$ defined by 
$$H_\ell:=\{g\in G_\ell\ |\ g \stackrel{\tiny{G_\ell}}{\sim} \rho_\ell({\rm Frob}_p) {\rm\ for\ some\ }p\not\in X_\eta  \}.$$
By Chebotarev density theorem, one has 
$$1=\frac{|H_\ell|}{|G_\ell|}+{\rm den}(X_\eta)\le \frac{|H_\ell|}{|G_\ell|}+{\rm den.sup}(X_\eta)
\le \frac{|H_\ell|}{|G_\ell|}+\eta,$$
giving $(1-\eta)|G_\ell|\le |H_\ell|$. 

The characteristic polynomial of each element of $H_\ell$ is the reduction of some element of $\mathcal{M}$. Therefore one has 
$$|\{\det(I_4-hT)\ |\ h\in H_\ell  \}|\le M.
$$
\end{proof}

By Lemma \ref{condition} together with Proposition \ref{main-prop-finite-group},  
there exists a constant $A$ such that $|G_\ell|\le A$ for any $\ell\in P_L$. 
Let $Y$ be the set of polynomials $(1-\alpha T)(1-\beta T)(1-\gamma T)(1-\delta T)$, where $\alpha,\beta,\gamma,$ and $\delta$ are 
roots of unity of order less than $A$. 
If $p\not\in S_\pi$, for all $\ell\in P_L$ with $\ell\neq p$, there exists $R(T)\in Y$ such that 
$$H_p(T)\equiv R(T) \ {\rm mod}\ \lambda_\ell.
$$
Since $Y$ is finite and $P_L$ is infinite, 
$$H_p(T)=R(T).
$$
Let $P_L'$ be the set of $\ell\in P_L$ such that $\ell>A$ and for $R,S\in Y$, $R\not\equiv S\ {\rm mod}\ \lambda_\ell$. 
Then it is easy to see that $P_L'$ is infinite. 
For each $\ell\in P_L'$, $\ell$ does not divide $|G_\ell|$, since $\ell>A\ge |G_\ell|$.  
Let $\pi: GSp_4(\mathcal{O}_{\lambda_\ell}){\lra}  GSp_4(\F_\ell))$ be the reduction map. 
Applying a profinite version of Schur-Zassenhaus' theorem (cf. \cite{RZ}, page 40, Theorem 2.3.15) to $\pi^{-1}(G_\ell)$ 
and $\pi^{-1}(G_\ell)\cap {\rm Ker}(\pi)$ (note that the latter group is a Hall subgroup of $\pi^{-1}(G_\ell)$ 
in the sense of \cite{RZ}),   
there exists a subgroup $H\subset \pi^{-1}(G_\ell)$ such that $\pi^{-1}(G_\ell)=H\cdot(\pi^{-1}(G_\ell)\cap {\rm Ker}(\pi))$ 
and $H\cap(\pi^{-1}(G_\ell)\cap {\rm Ker}(\pi))={1}$. 
Then the composition of the inclusion $H\hookrightarrow \pi^{-1}(G_\ell)$ and 
$\pi$ induces an isomorphism  
$$H \stackrel{\sim}{\lra} G_\ell={\rm Im}\: \rho_\ell.
$$ 
Hence we have a lift $\rho'_\ell:G_\Q\lra GSp_4(\mathcal{O}_{\lambda_\ell})$ of $\rho_\ell$.  
Since  any element of ${\rm Im}(\rho'_\ell)\simeq H$ is of order less than $A$, one has 
$\det(I_4-\rho'_\ell({\rm Frob}_p)T)\in Y$  for any $p\nmid N\ell$ by construction.  
On the other hand, we have 
$$\det(I_4-\rho'_\ell({\rm Frob}_p)T)\equiv H_p(T)\ {\rm mod}\ \lambda_\ell.
$$
Since $\ell\in P_L'$, the above congruence relation implies the equality 
$$\det(I_4-\rho'_\ell({\rm Frob}_p)T)=H_p(T).
$$  
for any $p\nmid N\ell$. Now we replace $\ell$ with another prime $\ell'\in P_L'$. Then one has 
$\rho'_{\ell'}:G_\Q\lra GSp_4(\mathcal{O}_{\lambda_{\ell'}})$ such that 
$$\det(I_4-\rho'_\ell({\rm Frob}_p)T)=\det(I_4-\rho'_{\ell'}({\rm Frob}_p)T)
$$
for any $p\nmid N\ell\ell'$. 
By Chebotarev density theorem, one has $\iota\circ \rho'_{\ell'}\sim \iota\circ \rho'_{\ell}$ and this means that 
$\rho'_{\ell}$ is unramified at $\ell$. Hence we have the desired representation 
\begin{equation}\label{artin-rep-eq}
\rho_F:=\rho'_{\ell}:G_\Q\lra GSp_4(\mathcal{O}_{\lambda_\ell})\hookrightarrow GSp_4(\C),
\end{equation}
where the second map comes from a fixed embedding $\mathcal{O}_{\lambda_\ell}\hookrightarrow \C$. 
Since $\nu(\rho_F(c))\equiv\ -1\ {\rm mod}\ \ell$ and $\nu(\rho_F(c))$ has eigenvalues $1,1,-1$, and $-1 \ {\rm mod}\ \ell$, by Conjecture \ref{galois}, for all but finitely many $\ell$, one has $\nu(\rho_F(c))=-1$ and $\nu(\rho_F(c))$ has eigenvalues $1,1,-1$, and $-1$. 
This implies $\rho_F$ is symplectically odd by Lemma \ref{odd}.

It remains to show that $\rho_F$ is reducible if and only if $F$ is of endoscopic type.
If $\rho_F$ is reducible, then we have the following four cases: 
\begin{enumerate}
\item Im$\rho_F$ is contained in $M_B(\C)$;
\item Im$\rho_F$ is contained in $M_Q(\C)$, but not in $M_B(\C)$; 
\item Im$\rho_F$ is contained in $M_P(\C)$, but not in $M_B(\C)$; 
\item Im$\rho_F$ is contained in $H^{{\rm en}}(\C)$, but not in $M_B(\C)$. 
\end{enumerate}

We will prove that only the case (4) occurs and further it is the case only when $F$ is of endoscopic type.

Case (1):  One can see that $\rho_F={\rm diag}(\chi_1,\chi_2,\chi^{-1}_1\varepsilon,\chi^{-1}_2\varepsilon)$ where 
$\chi_1,\chi_2:G_\Q\lra \C^\times, i=1,2$ are gr\"ossencharacters of finite order and
$$\lambda(p)=\chi_1(p)+\chi_2(p)+\chi^{-1}_1(p)\varepsilon(p)+\chi^{-1}_2(p)\varepsilon(p),
$$   
for any $p\nmid N$. 
Then we have 
$$|\lambda(p)|^2=4+2(\chi_1\overline{\chi}_2(p)+\overline{\chi}_1\chi_2(p))+2(\overline{\chi}_1\overline{\chi}_2\varepsilon(p)+\chi_1\chi_2\overline{\varepsilon}(p) )
+\overline{\chi}^2_1\varepsilon(p)+\overline{\chi}^2_2\varepsilon(p)+\chi^2_1\overline{\varepsilon}(p)+\chi^2_2\overline{\varepsilon}(p).$$
One then has 
$$\lim_{s\to 1^+}\frac{1}{\log \frac{1}{s-1}}\sum_{p\nmid N}\frac{|\lambda(p)|^2}{p^s}\ge 4$$
which contradicts to (\ref{rankin}). Hence this case does not occur. 

Case (2): One can see that $\rho_F=\chi_1\oplus \rho\oplus \chi_2$ where 
$\chi_1,\chi_2:G_\Q\lra \C^\times, i=1,2$ are gr\"ossencharacters of finite order and $\rho:G_\Q\lra GL_2(\C)$ is an odd irreducible Artin representation. 
By Corollary 0.4 of \cite{kisin}, $\rho$ is modular, i.e., there exists an elliptic cusp form $f$ attached to $\rho$.  
Let $\widetilde{\rho}$ be the complex conjugate of $\rho$, i.e., the composite of $\rho$ and the complex conjugate 
$GL_2(\C)\lra GL_2(\C)$. 
Since  
$$\lambda(p)=\chi_1(p)+\chi_2(p)+{\rm tr}(\rho({\rm Frob}_p))$$   
for any $p\nmid N$, one has  
\begin{eqnarray*}
|\lambda(p)|^2 &=& 2+{\rm tr}(\rho\otimes\widetilde{\rho}({\rm Frob}_p)) \\
& + & 2(\chi_1\overline{\chi}_2(p)+\overline{\chi}_1\chi_2(p))+(\chi_1(p)+\chi_2(p)){\rm tr}(\widetilde{\rho}({\rm Frob}_p))+
(\overline{\chi}_1(p)+\overline{\chi}_2(p)){\rm tr}(\rho({\rm Frob}_p)).
\end{eqnarray*}
A standard argument on Rankin-Selberg convolution of $f$ shows that 
 $$\lim_{s\to 1^+}\frac{1}{\log \frac{1}{s-1}}\sum_{p\nmid N}\frac{{\rm tr}(\rho\otimes\widetilde{\rho}({\rm Frob}_p))}{p^s}=1.
$$
Then one has $$\lim_{s\to 1^+}\frac{1}{\log \frac{1}{s-1}}\sum_{p\nmid N}\frac{|\lambda(p)|^2}{p^s}\ge 3$$
which contradicts to (\ref{rankin}). Hence this case does not occur. 

Case (3): One can see that $\rho_F=\rho^{\vee}\otimes\rho\otimes \chi=\chi\oplus Ad(\rho)\otimes\chi$ where 
$\chi:G_\Q\lra \C^\times$ is a gr\"ossencharacter of finite order and $\rho: G_\Q\lra GL_2(\C)$ is an odd irreducible Artin representation. Let $f$ be the elliptic cusp form attached to $\rho$ explained as above, and let $Ad(\pi_f)$ be the Gelbart-Jacquet lift of $\pi_f$. Then the transfer of $\pi_F$ to $GL_4$ is of the form $\chi\boxplus Ad(\pi_f)\otimes\chi$. This contradicts to the fact that $\Pi$ is either cuspidal or $\pi_1\boxplus\pi_2$.

Case (4): One can see that $\rho_F=\rho_1\oplus \rho_2$ where $\rho_i:G_\Q\lra GL_2(\C),\ i=1,2$ are odd irreducible Artin representations. 
Let $f_i$ be the elliptic cusp form attached to $\rho_i$ explained as above.  
Then $L(s,\pi_p)=L_p(s,\pi_{f_1})L_p(s,\pi_{f_2})$ for all $p\nmid N$ and hence $\pi_F$ is of endoscopic type. 
Conversely, if $\pi_F$ is of endoscopic type, then there exist elliptic cusp forms $f_1, f_2$ such that $L(s,\pi_p)=L_p(s,\pi_{f_1})L_p(s,\pi_{f_2})$. 
It follows from the coefficients of $p^{-4s}$ of the local L-factors that each $f_i$ is of weight one. 
Hence we have $\rho_F=\rho_{f_1}\oplus\rho_{f_2}$ where $\rho_{f_i}$ is the Artin representation attached to $f_i$. 

Finally we remark that the independence of the once fixed embedding $\mathcal{O}_{\lambda_\ell}\hookrightarrow \C$ to 
our $\rho_F$ in (\ref{artin-rep-eq}) follows from the proof of Proposition \ref{mod-p-galois} and 
Chebotarev density theorem. This proves the main theorem. 

\begin{corollary}(Ramanujan conjecture) \label{Ram}
Let $\pi_F=\pi_{\infty}\otimes\otimes_p' \pi_p$ be the cuspidal representation of 
$GSp_4$ attached to the real analytic Siegel cusp eigenform of weight $(2,1)$ with the eigenvalues 
$-\frac 5{12}$ and $0$ for the generators $\Delta_1$ and $\Delta_2$. Then under the assumptions in Theorem 1.1, $\pi_p$ is tempered for all $p$.
\end{corollary}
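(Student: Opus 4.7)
The archimedean place is immediate from Theorem \ref{infinity} and Lemma \ref{knapp-stein}: $\pi_\infty$ is the irreducible principal series $\mathrm{Ind}_B^G\,\chi(1,\sgn,\sgn)$ induced from unitary characters of the torus, hence tempered and generic. For an unramified prime $p\nmid N$, the main theorem furnishes an Artin representation $\rho_F\colon G_\Q\to GSp_4(\C)$ with $\det(I_4-\rho_F(\mathrm{Frob}_p)T)=H_p(T)$. Because $\rho_F$ has finite image, every root of $H_p(T)$ is a root of unity; by (\ref{a1p})--(\ref{a2p}), these roots are precisely the Satake parameters $\alpha_{0p},\alpha_{0p}\alpha_{1p},\alpha_{0p}\alpha_{2p},\alpha_{0p}\alpha_{1p}\alpha_{2p}$ of the spherical unitary representation $\pi_p$. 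Having all Satake parameters on the unit circle forces the spherical unitary $\pi_p$ to be tempered.

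The remaining case is that of ramified primes $p\mid N$, and I would handle it by splitting along the endoscopy/stability dichotomy established in the last part of the proof of Theorem \ref{artin-rep}. If $\pi_F$ is endoscopic, then by Remark \ref{endoscopic}, $\rho_F=\rho_{f_1}\oplus\rho_{f_2}$ with $f_1,f_2$ elliptic cusp forms of weight one, so the transfer $\Pi$ is the isobaric sum $\pi_{f_1}\boxplus\pi_{f_2}$ on $GL_4(\A)$. Deligne--Serre together with local Langlands for $GL_2(\Q_p)$ yield temperedness of each $\pi_{f_i,p}$ at every place (the local Langlands parameter is bounded since $\rho_{f_i}$ has finite image), and this tempered nature is preserved by the Yoshida-type endoscopic transfer, whence $\pi_{F,p}$ is tempered. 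If $\pi_F$ is non-endoscopic, then $\iota\circ\rho_F$ is irreducible and $\Pi$ is cuspidal on $GL_4$. Here I would invoke local Langlands for $GL_4(\Q_p)$ (Harris--Taylor, Henniart): strong multiplicity one identifies $\Pi$ with the automorphic representation whose unramified local parameters are $\rho_F|_{W_{\Q_p}}$, and since $\rho_F$ has finite image every such local restriction is bounded and Frobenius-semisimple, hence corresponds under local Langlands to a tempered generic admissible representation of $GL_4(\Q_p)$. Therefore $\Pi_p$ is tempered at every $p$, and the identity $L(s,\pi_{F,p})=L(s,\Pi_p)$ from (TR) transfers temperedness back to $\pi_{F,p}$.

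The hard part is the ramified non-endoscopic step, where one must upgrade the almost-everywhere matching between $\Pi$ and $\rho_F$ to a local identification at each bad prime. The difference from the (open) Ramanujan conjecture for general cuspidal $GL_4$ automorphic representations is precisely that the finiteness of $\mathrm{Im}(\rho_F)$ forces every local Weil--Deligne parameter $\rho_F|_{W_{\Q_p}}$ to be bounded. Once that boundedness is promoted through local Langlands for $GL_4$ to a bounded Langlands parameter for $\Pi_p$, temperedness of all local components of $\pi_F$ follows.
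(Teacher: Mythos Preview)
Your approach at ramified primes is genuinely different from the paper's, and the core step you flag as ``the hard part'' is where it remains incomplete. You try to pass through $GL_4$: identify the local Langlands parameter of $\Pi_p$ with $\rho_F|_{W_{\Q_p}}$, then pull temperedness back to $\pi_{F,p}$. But strong multiplicity one only pins down $\Pi$ globally; it does not hand you the parameter of $\Pi_p$ at a bad prime. You would need a local--global compatibility statement (matching local $L$- and $\epsilon$-factors everywhere) between the cuspidal $\Pi$ and the Artin $\rho_F$, and you do not supply one. Likewise, (TR) as formulated in Theorem~\ref{transfer} asserts only the global equality $L(s,\Pi)=L(s,\pi_F)$, so the sentence ``the identity $L(s,\pi_{F,p})=L(s,\Pi_p)$ from (TR) transfers temperedness back'' presumes a local statement you have not established.

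The paper's argument avoids both issues by working directly on the $GSp_4$ side. It invokes Proposition~A.1 of \cite{martin}, extended to Rankin--Selberg pairs, to get $L(s,\sigma_p\times\rho_{F,p})=L(s,\pi_{\sigma,p}\times\pi_{F,p})$ at every $p$ for any one- or two-dimensional solvable Artin $\sigma$. It then lists the three possible shapes of a non-tempered unitary $\pi_{F,p}$ (Langlands quotients from $P$, $Q$, or $B$ with a strictly positive exponent), and for each shape uses Grunwald--Wang to globalize a suitable local character or dihedral supercuspidal to a $\sigma$ so that $L(s,\pi_{\sigma,p}\times\pi_{F,p})$ acquires a pole with $\mathrm{Re}(s)>0$. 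Since $\rho_F$ has finite image, the Artin side $L(s,\sigma_p\times\rho_{F,p})$ is holomorphic there, a contradiction. This twisting-and-pole argument is the substitute for the local--global compatibility you would need; your route can be made to work, but only after importing an $L$-function comparison of essentially the same strength.
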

\begin{proof} By Theorem \ref{artin-rep}, there exists the Artin representation $\rho_F : G_\Q\lra GSp_4(\C)$ such that
$det(I_4-\rho_F(\text{Frob}_p)T)=H_p(T)$ for almost all $p$. This shows that $\pi_p$ is tempered for almost all $p$.
For any one or two dimensional irreducible representation $\sigma: G_\Q\lra GL_r(\C)$, $r=1,2$, with solvable image, let $\pi_\sigma$ be the cuspidal representation attached to $\sigma$. Let $\rho_F=\rho_\infty\otimes \otimes_p' \rho_p$. 
Proposition A.1 of \cite{martin} extends to the Rankin-Selberg $L$-function $L(s,\sigma\times \rho_F)$.
Hence $L(s,\sigma_p\times \rho_p)=L(s,\pi_{\sigma,p}\times\pi_p)$ for all $p$. 

If $\pi_p$ is non-tempered and unitary, we have the following classification of $\pi_p$ (cf. \cite{RS}, Appendix):
(1) $\pi_p$ is the Langlands quotient of
$Ind_{P(\Q_p)}^{G(\Q_p)}\, \eta |det|^{\alpha}\rtimes \mu|\, |^{-\alpha}$, $0<\alpha<\frac 12$, where $\mu$ is a unitary character, and 
$\eta$ is a unitary supercuspidal representation of $GL_2(\Q_p)$ with $\omega_\eta=1$; (2) $\pi_p$ is the Langlands' quotient of $Ind_{Q(\Q_p)}^{G(\Q_p)}\, \xi|\ |^\alpha\rtimes \eta|det|^{-\frac {\alpha}2}$, $0<\alpha<1$, where $\xi^2=1$, $\xi\ne 1$, and $\eta$ is a unitary supercuspidal representation of 
$GL_2(\Q_p)$ such that $\xi\otimes\eta\simeq \eta$, i.e., $\eta$ is of dihedral type; (3) $\pi_p$ is the Langlands' quotient of $Ind_{B(\Q_p)}^{G(\Q_p)}\, \chi(\mu|\ |^\alpha, \nu|\ |^\beta, \xi|\, |^{-\frac {\alpha+\beta}2})$, where $\mu, \nu, \xi$ are unitary characters, and $\alpha>0$ or $\beta>0$. 

If $\pi_p$ is the Langlands quotient of $Ind_{Q(\Q_p)}^{G(\Q_p)}\, \xi|\ |^\alpha\rtimes \eta|det|^{-\frac {\alpha}2}$, then its lift to 
$GL_4(\Q_p)$ is $\eta|det|^{-\frac {\alpha}2}\oplus \eta|det|^{\frac {\alpha}2}$.
Now, we can write $\eta=\eta'|det|^{it}$ where $t\in \Bbb R$, and the central character of $\eta'$ is of finite order.
We choose a Galois representation $\sigma: G_\Q\lra GL_2(\C)$ such that $\pi_{\sigma,p}=\tilde{\eta'}$. 
[It can be done as follows: $\eta'$ is determined by a finite order character $\gamma$ of a quadratic extension $k/\Bbb Q_p$. By Grunwald-Wang theorem, we can choose a Hecke character $\chi$ of a quadratic extension $F/\Bbb Q$ such that $F_p=k$, and $\chi_p=\gamma$.
Hence $\chi$ gives rise to the Galois representation $\sigma$.]
Then $L(s,\sigma_p\times\rho_p)=L(s,\tilde{\eta'}\times\pi_p)$. The left hand side is holomorphic for 
$Re(s)>0$. On the other hand, the right hand side is $L(s-\frac {\alpha}2+it,\eta'\times\tilde{\eta'})L(s+\frac {\alpha}2+it,\eta'\times\tilde{\eta'})$, which 
has a pole at $s=\frac {\alpha}2-it$. This is a contradiction. 

If $\pi_p$ is the Langlands quotient of
$Ind_{P(\Bbb Q)}^{G(\Bbb Q)}\, \eta |det|^\alpha\rtimes \mu|\, |^{-\alpha}$, its lift to $GL_4(\Q_p)$ is $\mu|\,|^{-\alpha}\oplus \mu|\,|^{\alpha}\oplus \mu\otimes\eta$. Let $\mu=\mu'|\,|^{it}$, where $\mu'$ is of finite order. By choosing a Dirichlet character $\sigma$ such that $\sigma_p={\mu'}^{-1}$, we can deduce a contraction. If $\pi_p$ is the Langlands quotient of $Ind_{B(\Q_p)}^{G(\Q_p)}\, \chi(\mu|\ |^\alpha, \nu|\ |^\beta, \xi|\, |^{-\frac {\alpha+\beta}2})$, then its lift to $GL_4(\Q_p)$ is 
$\xi|\,|^{-\frac {\alpha+\beta}2}\oplus \mu\xi|\,|^{\frac {\alpha-\beta}2}\oplus \nu\xi|\,|^{-\frac {\alpha-\beta}2}\oplus \mu\nu\xi|\,|^{\frac {\alpha+\beta}2}$. Since one of $\pm\frac {\alpha+\beta}2$ or $\pm\frac {\alpha-\beta}2$ is positive, again we deduce a contradiction.
Hence $\pi_p$ is tempered for all $p$.
\end{proof}
The following proposition is due to R. Schmidt \cite{schmidt1}, Corollary 3.2.3.
\begin{prop} Let $F$ be a holomorphic Siegel cusp form of weight $(k_1,k_2)$, and $\pi_F=\pi_\infty\otimes\otimes_p' \pi_p$ be the associated cuspidal representation of $GSp_4/\Q$. Then $\pi_\infty$ is a subquotient of $\text{Ind}_B^G\, \chi(\mu_1,\mu_2,\sigma)$, where $\mu_1, \mu_2, \sigma$ are characters of $\Bbb R^\times$ such that $\sigma(x)=x^{\frac {3-k_1-k_2}2} \, \text{for $x>0$}$, and
$$\mu_1(x)=\begin{cases} |x|^{k_2-2}, &\text{if $k_2$ even}\\ |x|^{k_2-2}\sgn(x), &\text{if $k_2$ odd}\end{cases},\quad
\mu_2(x)=\begin{cases} |x|^{k_1-1}, &\text{if $k_2$ even}\\ |x|^{k_1-1}\sgn(x), &\text{if $k_2$ odd}\end{cases}.
$$
\end{prop}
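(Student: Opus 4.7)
The strategy is to identify $\pi_\infty$ as a (limit of) holomorphic discrete series of $GSp_4(\mathbb{R})$, compute its infinitesimal and central characters, and then invoke Harish-Chandra's subquotient theorem to realize it inside the announced principal series.

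First, I would observe that for a holomorphic Siegel cusp form $F$ of weight $(k_1,k_2)$, the associated adelic function $\phi_F$ (defined exactly as in Section \ref{adelic}) is annihilated at infinity by the antiholomorphic part $\mathfrak{p}^-$ of the Cartan decomposition $\mathfrak{g}_{0,\C}=\mathfrak{k}_\C\oplus\mathfrak{p}^+\oplus\mathfrak{p}^-$, because $F(Z)$ is holomorphic in $Z$. Together with the minimal $K_\infty$-type being $(k_1,k_2)$, this forces $\pi_\infty$ to be the holomorphic discrete series (or a limit of such, when $k_2=2$) with Blattner parameter $(k_1,k_2)$.

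Second, I would determine the infinitesimal character using (\ref{casimir}). The holomorphic discrete series with lowest $K$-type $(k_1,k_2)$ has Harish-Chandra parameter $(k_1-1,k_2-2)$ after the usual $\rho$-shift; equivalently $\widetilde{\Delta}_1$ and $\widetilde{\Delta}_2$ act on $v_\infty$ by $\tfrac{(k_1-1)^2+(k_2-2)^2-5}{12}$ and $(k_1-1)^2(k_2-2)^2$ respectively. Comparing with (\ref{casimir}) forces the continuous parameters in any principal series realization to be $\{s_1,s_2\}=\{k_2-2,\,k_1-1\}$ up to the Weyl group. The particular ordering in the statement is obtained by matching to the Jacquet module of the induced representation via \cite{miy-oda}, Proposition 3.2.

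Third, I would determine the signs $\epsilon_1,\epsilon_2$ and the central character $\sigma$. The parity criterion for the $K$-type $(k_1,k_2)$ to occur in $\mathrm{Ind}_B^G\chi(\epsilon_1|\cdot|^{s_1},\epsilon_2|\cdot|^{s_2},\epsilon_0|\cdot|^{s_0})$ given in \cite{miy-oda}, Proposition 3.2, applied to the minimal $K$-type of the holomorphic discrete series, pins down $\epsilon_1=\epsilon_2=1$ when $k_2$ is even and $\epsilon_1=\epsilon_2=\sgn$ when $k_2$ is odd. For the similitude component, the transformation law of a weight-$(k_1,k_2)$ form under $aI_4\in Z_G(\R)^+$ yields $\phi_F(g\cdot aI_4)=a^{-(k_1+k_2)}\phi_F(g)$, so the central character of $\pi_\infty$ on $Z_G(\R)^+$ equals $a\mapsto a^{-(k_1+k_2)}$. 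Combined with the relation $s_1+s_2+2s_0=0$ (as in Section \ref{inf}, enforcing that the central character be trivial on the connected center up to the correct twist), this forces $\sigma(x)=x^{(3-k_1-k_2)/2}$ for $x>0$.

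Finally, Harish-Chandra's subquotient theorem (\cite{Mu}, Theorem 2.1) asserts that any irreducible admissible representation of $GSp_4(\R)$ is a subquotient of a principal series with matching infinitesimal and central characters, and the preceding steps show that the principal series uniquely determined by these invariants (up to Weyl conjugacy) is $\mathrm{Ind}_B^G\chi(\mu_1,\mu_2,\sigma)$ with the stated $\mu_1,\mu_2,\sigma$. The main obstacle is the careful matching between the Harish-Chandra/Blattner normalization of the holomorphic discrete series and the induction parameters $(s_0,s_1,s_2)$ with the correct sign characters $\epsilon_i$, particularly delicate in the limit case $k_2=2$ where $\pi_\infty$ is a limit of discrete series and the subquotient is proper. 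Once the bookkeeping is done, the conclusion follows.
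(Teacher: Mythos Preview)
The paper does not prove this proposition; it simply attributes it to R.~Schmidt (\cite{schmidt1}, Corollary~3.2.3) without further argument. So there is no proof in the paper to compare against, and your sketch is effectively supplying one.

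Your overall strategy---identify $\pi_\infty$ as a (limit of) holomorphic discrete series, compute its infinitesimal character, and invoke the subquotient theorem---is the right shape. Two points deserve tightening, however.

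First, the central character step is internally inconsistent. By the construction in Section~\ref{adelic}, the adelic lift $\phi_F$ is defined via the decomposition $g=rz_\infty d_a g_\infty k$ with $z_\infty\in Z_G(\R)^+$, and $\phi_F$ does not depend on $z_\infty$; hence the central character of $\pi_F$ is trivial on $Z_G(\R)^+$, giving $s_1+s_2+2s_0=0$ directly (exactly as in Section~\ref{inf}). Your computation $\phi_F(g\cdot aI_4)=a^{-(k_1+k_2)}\phi_F(g)$ contradicts this and should be dropped; once $s_1=k_2-2$ and $s_2=k_1-1$ are known, $s_0=(3-k_1-k_2)/2$ follows automatically.

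Second, the determination of the signs $\epsilon_1,\epsilon_2$ is not complete. As used in Section~\ref{inf}, Proposition~3.2 of \cite{miy-oda} only constrains the product $\epsilon_1\epsilon_2$ from the parity of the occurring $K$-type; the infinitesimal character sees nothing about the signs. To pin down $\epsilon_1=\epsilon_2$ (and that their common value is governed by the parity of $k_2$), you need more than $K$-type containment: one route is to use the explicit Langlands data of the holomorphic discrete series (Blattner parameter $(k_1,k_2)$, Harish-Chandra parameter $(k_1-1,k_2-2)$) and the known embedding of such representations into specific principal series; another is to check directly that $\mathfrak p^-$ annihilates the lowest $K$-type inside $\mathrm{Ind}_B^G\chi(\mu_1,\mu_2,\sigma)$ only for the stated choice of signs. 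Your phrase ``once the bookkeeping is done'' is precisely where the substance lies, and it is the content of Schmidt's computation.
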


Using it, we can prove
\begin{prop} \label{no-hol}
Let $(k_1,k_2)$ be a pair of integers $k_1\ge k_2\ge 0$. Then 
there are no holomorphic Siegel cusp forms of weight $(k_1,k_2)$ which give rise to the Artin representations.
\end{prop}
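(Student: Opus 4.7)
The plan is to derive a contradiction from Schmidt's explicit determination of $\pi_\infty$ stated just above. Suppose for contradiction that a holomorphic Siegel cusp form $F$ of weight $(k_1,k_2)$ with $k_1\ge k_2\ge 0$ gives rise to an Artin representation $\rho_F\colon G_\Q\lra GSp_4(\C)$. Since $\rho_F$ has finite image, its archimedean Langlands parameter $\phi\colon W_\R\lra GSp_4(\C)$ has finite image as well. Because $\C^\times\subset W_\R$ is connected while $\phi(\C^\times)$ lies in a finite (hence totally disconnected) subgroup, $\phi|_{\C^\times}$ must be constant, so $\phi(z)=I_4$ for every $z\in \C^\times$.

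Next I would invoke the standard dictionary between $\phi|_{\C^\times}$ and the infinitesimal character of $\pi_\infty$: the condition $\phi(z)=I_4$ is equivalent to $\pi_\infty$ sharing its infinitesimal character with the trivial representation of $GSp_4(\R)$. In the parametrization of Schmidt's proposition, writing $\mu_i(x)=|x|^{s_i}(\sgn x)^{\epsilon_i}$ and letting $s_0$ denote the exponent of $\sigma$ on $\R^\times_+$, the infinitesimal character of any subquotient of $\mathrm{Ind}_B^G\,\chi(\mu_1,\mu_2,\sigma)$ is determined by $(s_1,s_2,s_0)$ modulo the Weyl group action of $GSp_4$. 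Since the Weyl orbit of $(0,0,0)$ is the singleton $\{(0,0,0)\}$, the above forces $(s_1,s_2,s_0)=(0,0,0)$.

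Reading the exponents off Schmidt's formulas yields $s_1=k_2-2$, $s_2=k_1-1$, and $s_0=(3-k_1-k_2)/2$; the equalities $s_1=s_2=0$ force $(k_1,k_2)=(1,2)$, which violates the standing hypothesis $k_1\ge k_2$. This contradiction is the whole proof.

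There is no substantial obstacle once Schmidt's proposition is available. The only conceptual content is the translation of ``finite image of $\rho_F$'' into ``$\phi(z)=I_4$ on $\C^\times$'', which follows from continuity and connectedness combined with the Harish-Chandra dictionary identifying the semisimple part of $\phi|_{\C^\times}$ with the infinitesimal character of $\pi_\infty$; the numerical check at the end is immediate from the inequality $k_1\ge k_2$.
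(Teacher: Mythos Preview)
Your argument follows the same route as the paper's, and the numerical endgame ($k_2-2=0$, $k_1-1=0$, contradicting $k_1\ge k_2$) is exactly what the paper's ``This is a contradiction'' unpacks to. Two points need repair.

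First, you silently equate the $L$-parameter of $\pi_\infty$ with $\rho_F|_{W_\R}$. But ``gives rise to an Artin representation'' in this paper means only that Hecke polynomials match at $p\nmid N$; nothing is asserted at $\infty$. The paper bridges this gap by citing Proposition~A.1 of \cite{martin}, which propagates the equality $L(s,\pi_v)=L(s,\rho_v)$ from almost all places to every place, including $\infty$. Without that step you have no control over $\pi_\infty$, and your ``standard dictionary between $\phi|_{\C^\times}$ and the infinitesimal character of $\pi_\infty$'' cannot be applied, because you have not yet shown that $\phi$ is the parameter of $\pi_\infty$.

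Second, the assertion that $\phi(z)=I_4$ is equivalent to $\pi_\infty$ having the infinitesimal character of the \emph{trivial} representation is false: the trivial representation of $GSp_4(\R)$ has Harish-Chandra parameter $\rho=(2,1)$, not $(0,0)$ (it is a subquotient of the normalized ${\rm Ind}_B^G\,\delta_B^{-1/2}$, not of ${\rm Ind}_B^G\,1$). The correct statement, and the one the paper uses, is simply that $\phi(z)=I_4$ forces $\pi_\infty$ to be a constituent of ${\rm Ind}_B^G\,\chi(\epsilon_1,\epsilon_2,\epsilon_0)$ with each $\epsilon_i\in\{1,\sgn\}$, hence exponents $(s_1,s_2,s_0)=(0,0,0)$. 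Your Weyl-orbit step and the comparison with Schmidt's exponents then go through unchanged, so this is an error in the intermediate justification rather than a fatal one.
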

\begin{proof} Let $F$ be a holomorphic Siegel cusp form of weight $(k_1,k_2)$ which is a Hecke eigenform.
We may assume that $k_2>0$ by the holomorphy. Let 
$\pi_F=\pi_\infty\otimes \otimes_p' \pi_p$ be the associated cuspidal representation of $GSp_4/\Bbb Q$. Then by the above proposition, 
$\pi_\infty$ is a subquotient of ${\rm Ind}_B^G \, \chi(\mu_1,\mu_2,\sigma)$, where $\mu_1,\mu_2,\sigma$ are as in the above proposition.
If $\pi_F$ corresponds to an Artin representation $\rho: G_{\Bbb Q}\longrightarrow GSp_4(\Bbb C)$, then 
$L(s,\pi_p)=L(s,\rho_p)$ for almost all $p$. By \cite{martin}, Proposition A.1, $L(s,\pi_\infty)=L(s,\rho_\infty)$. So 
the Langlands parameter of $\pi_\infty$ is $\phi: W_{\Bbb R}\lra GSp_4(\C)$,
$\phi(z)=I_4$, and $\phi(j)$ is conjugate to $\diag(\pm 1,\pm 1,\mp 1,\mp 1)$. Therefore, $\pi_\infty={\rm Ind}_B^G\, \chi(\epsilon_1,\epsilon_2,\epsilon_0)$, where $\epsilon_i=1$ or $\sgn$ from the discussion in Section \ref{inf}. This is a contradiction. 
\end{proof}

\section{Symmetric cube of elliptic cusp forms of weight 1}\label{sym-cube}

Let $\pi$ be a cuspidal representation of $GL_2/\Q$ which corresponds to the weight 1 new form with respect to $\Gamma_0(N)$ with the central character $\epsilon$.
Let $\rho$ be the Galois representation $\rho: G_{\Q}\longrightarrow GL_2(\Bbb C)$ which corresponds to $\pi$ by Deligne-Serre theorem \cite{d&s}. Then ${\rm Sym}^3(\pi)$ be an automorphic representation of $GL_4/\Q$ with the central character $\epsilon^3$ \cite{Kim-Sh}. 

If $\pi$ is of dihedral or tetrahedral type, ${\rm Sym}^3(\pi)=\pi_1\boxplus \pi_2$ for cuspidal representations $\pi_1, \pi_2$ of 
$GL_2/\Q$.
Otherwise, i.e., if $\pi$ is of octahedral or icosahedral type, ${\rm Sym}^3(\pi)$ is cuspidal. 
In all cases, since $L(s, {\rm Sym}^3(\pi), \wedge^2\otimes\epsilon^{-3})$ has a pole at $s=1$, by the result of Jacquet, Piatetski-Shapiro and Shalika (cf. \cite{AS}, \cite{AS1}), 
there exists a generic cuspidal representation $\tau$ of $GSp_4(\A)$ with the central character $\epsilon^3$ whose transfer to 
$GL_4(\A)$ is 
${\rm Sym}^3(\pi)$. The Langlands parameter of $\pi_{\infty}$ is
$$\phi: W_{\Bbb R}\longrightarrow GL_2(\Bbb C),\quad \phi(z)=I_2, \quad \phi(j)=\begin{pmatrix} 0&1\\1&0\end{pmatrix}.
$$
Hence the Langlands parameter of ${\rm Sym}^3(\pi_\infty)$ is 
$${\rm Sym}^3(\phi): W_{\Bbb R}\longrightarrow GL_4(\Bbb C),\quad {\rm Sym}^3(\phi)(z)=I_4, \quad {\rm Sym}^3(\phi)(j)=J',
$$
where
$J'=\begin{pmatrix} 0&0&0&1\\0&0&1&0\\0&1&0&0\\1&0&0&0\end{pmatrix}$. Let $P=\frac 12\begin{pmatrix} 1&-1&0&0\\0&0&1&1\\0&0&-1&1\\1&1&0&0\end{pmatrix}$. Then $P^{-1}={}^t P$, $P^{-1}J' P=\diag(1,-1,-1,1)$, and ${}^t P\begin{pmatrix} 0&I_2\\-I_2&0\end{pmatrix} P=\frac 12\begin{pmatrix} 0&I_2\\-I_2&0\end{pmatrix}$. Therefore, $J'$ is conjugate to $\diag(1,-1,-1,1)$ in $GSp_4(\C)$. Note that 
$L(s,{\rm Sym}^3(\pi_\infty))=\Gamma_{\Bbb C}(s)^2$, where $\Gamma_{\Bbb C}(s)=2(2\pi)^{-s}\Gamma(s)$.
Since the Langlands parameter of $\tau_{\infty}$ is ${\rm Sym}^3(\phi)$, and from the discussion in Section 5, 
$\tau_\infty={\rm Ind}_B^G \, \chi(1,\sgn,\sgn)$.
Now choose the highest weight vector in the $K_\infty$-type (2,1) in $\tau_{\infty}$, and 
take an automorphic form $\phi$ whose archimedean component is the highest weight vector. Then $\phi$ corresponds to a real analytic Siegel modular form $F$ on the upper half-space taking values in some two-dimensional vector space so that $\pi_F$ is in the same $L$-packet as in $\tau$:
More precisely, let $(f_0(k), f_1(k))$ be the first row of the $2\times 2$ matrix ${\rm det}(k) u(k)$ for $k\in K_\infty$, where $u(k)$ is given by the isomorphism $u: K_\infty\simeq U(2)$  as in Section \ref{inf}.
Then $V=\C f_0\oplus \C f_1$ forms an irreducible $K_\infty$-representation with the highest weight $(2,1)$, and $f_0$ is the highest weight vector, and $f_1$ is the lowest weight vector. Now using the Iwasawa decomposition $GSp_4(\Bbb R)=B(\Bbb R)K_\infty$, define the scalar-valued function
$\tilde\phi: GSp_4(\Bbb R)\longrightarrow \Bbb C$ by 
$$\tilde\phi(tuk)=\sgn(t_2)\sgn(t_0)|t_0|^{-\frac 32}|t_1|^2|t_2| f_0(k), \quad t=\diag(t_1,t_2,t_0 t_1^{-1}, t_0 t_2^{-1}).
$$
We also can define the vector-valued function $\phi: GSp_4(\Bbb R)\longrightarrow V$ by 
$$\phi(tuk)=\sgn(t_2)\sgn(t_0)|t_0|^{-\frac 32}|t_1|^2|t_2| (f_0(k)e_1+f_1(k)e_2).
$$
(Here $V$ can be identified with ${\rm Sym}({\rm St}_2)\otimes {\rm det}({\rm St}_2)$ in Section 3.1.)
Let $\lambda$ be the algebraic representation of $GL_2$ on $V$ as in Section 3.1. For $Z\in \mathcal H_2$, let $Z=g I$, and $F(Z)=\lambda(J(g,I))\phi(g)$. Then as in Section 3.3, we can show easily that $F$ is a real analytic Siegel cusp form of weight $(2,1)$.

This provides the existence of infinitely many real analytic Siegel cusp forms of weight $(2,1)$ with integral Hecke polynomials. Note that this is an unconditional result. We summarize our result as follows:
\begin{theorem} Let $f$ be a cusp form of weight one with respect to $\Gamma_0(N)$ with the central character $\epsilon$. Suppose $f$ is a Hecke eigenform. Then there exists a real analytic Siegel cusp form $F$ of weight $(2,1)$ with the eigenvalues $-\frac 5{12}$ and $0$ for the generators $\Delta_1$ and $\Delta_2$, and with integral Hecke polynomials such that 
${\rm Sym}^3(\pi_f)$ is the transfer of $\pi_F$.
\end{theorem}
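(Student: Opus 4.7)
The plan is to run the functorial transfer chain already sketched in this section and then repackage the resulting automorphic form on $GSp_4$ as a classical Siegel object. First apply Kim--Shahidi \cite{Kim-Sh} to obtain $\mathrm{Sym}^3(\pi_f)$ as an automorphic representation of $GL_4(\A)$ with central character $\epsilon^3$. The $GL_2$-identity $\wedge^2\,\mathrm{Sym}^3 \simeq (\mathrm{Sym}^4\otimes\det)\oplus\det^3$ forces $L(s,\mathrm{Sym}^3(\pi_f),\wedge^2\otimes\epsilon^{-3})$ to have a pole at $s=1$, so the descent theorem of Jacquet--Piatetski-Shapiro--Shalika \cite{AS,AS1} produces a globally generic cuspidal representation $\tau$ of $GSp_4(\A)$, with central character $\epsilon^3$, whose transfer to $GL_4(\A)$ is $\mathrm{Sym}^3(\pi_f)$.

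Next I would pin down $\tau_\infty$. Since $f$ has weight one, the Langlands parameter of $\pi_{f,\infty}$ is $\phi(z)=I_2$, $\phi(j)=\left(\begin{smallmatrix}0&1\\1&0\end{smallmatrix}\right)$, so $\mathrm{Sym}^3(\phi)(z)=I_4$ and $\mathrm{Sym}^3(\phi)(j)=J'$ is conjugate in $GSp_4(\C)$ to $\diag(1,-1,-1,1)$. By the classification recalled around Theorem \ref{infinity}, this forces $\tau_\infty\simeq \mathrm{Ind}_B^G\,\chi(1,\sgn,\sgn)$, and Lemma \ref{knapp-stein} guarantees the $L$-packet is a singleton so no ambiguity arises. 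The Casimir computation (\ref{casimir}) with $s_1=s_2=0$ then gives the desired eigenvalues $-\tfrac{5}{12}$ and $0$ for $\widetilde{\Delta}_1$ and $\widetilde{\Delta}_2$ on the highest weight vector of the $K_\infty$-type $(2,1)$.

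To materialize $F$ classically, I would pick a cuspidal automorphic vector $\varphi\in\tau$ whose archimedean component is the highest weight vector in the $K_\infty$-type $(2,1)$ and whose finite part is fixed by $K(N')$ for a suitable level $N'$. Using the Iwasawa decomposition $GSp_4(\R)=B(\R)K_\infty$, define a $V_{(2,1)}(\C)$-valued function $\phi$ on $GSp_4(\R)$ by the explicit recipe stated just above the theorem, and then apply the isomorphism (\ref{isom}) of Section \ref{adelic} to produce $F(Z)=\lambda_{(2,1)}(J(g_\infty,I))\phi(g_\infty)\in S_{(2,1)}(\G(N'),-\tfrac{5}{12},0)$ with $\pi_F\simeq\tau$; the parity condition $\chi_2(-1)=(-1)^{k_1+k_2}=-1$ is automatic because $\mathrm{Sym}^3(\phi)(j)$ carries the required archimedean sign.

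Finally, for integrality of the Hecke polynomials, invoke Deligne--Serre \cite{d&s} to obtain the Artin representation $\rho_f:G_\Q\lra GL_2(\C)$ associated to $f$; its Frobenius traces at good primes are algebraic integers. Under the embedding $GSp_4(\C)\hookrightarrow GL_4(\C)$ the Satake parameters of $\tau_p$ for $p\nmid N$ are the eigenvalues of $\mathrm{Sym}^3\rho_f(\mathrm{Frob}_p)$, so each Hecke polynomial $H_p(T)=\det(I_4-\mathrm{Sym}^3\rho_f(\mathrm{Frob}_p)T)$ lies in $\overline{\Z}[T]$. The main thing that will require care is matching normalizations: verifying that the Iwasawa-built $\phi$ actually lies in the automorphic space $\mathcal{A}_{(2,1)}(K(N'),-\tfrac{5}{12},0)$ (this is essentially the check that the two $\C^\infty$ and transformation conditions in Section \ref{adelic} hold on the vector we extracted from $\tau_\infty$), and tracking the shift $p^{(k_1+k_2-3)/2}$ of (\ref{local-global}) so that the Satake parameters of $\tau_p$ agree with those read off from the Hecke eigenvalues $a_{1,p}, a_{2,p}$ of $F$ via the formulas (\ref{a1p})--(\ref{a2p}).
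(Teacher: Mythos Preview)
Your proposal is correct and follows essentially the same route as the paper's Section \ref{sym-cube}: Kim--Shahidi for $\mathrm{Sym}^3(\pi_f)$, the pole of $L(s,\mathrm{Sym}^3(\pi_f),\wedge^2\otimes\epsilon^{-3})$ and descent via Jacquet--Piatetski-Shapiro--Shalika to a generic cuspidal $\tau$ on $GSp_4$, identification of $\tau_\infty$ by computing $\mathrm{Sym}^3(\phi)(j)$ and conjugating it to $\diag(1,-1,-1,1)$, and finally extracting $F$ from the highest weight vector in the $K_\infty$-type $(2,1)$. You even make explicit two points the paper leaves implicit: the identity $\wedge^2\,\mathrm{Sym}^3\simeq(\mathrm{Sym}^4\otimes\det)\oplus\det^3$ justifying the pole, and the integrality of $H_p(T)$ via $\det(I_4-\mathrm{Sym}^3\rho_f(\mathrm{Frob}_p)T)$ with $\rho_f$ the Deligne--Serre Artin representation.
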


Note that the image of ${\rm Sym}^3(\rho): G_{\Q}\longrightarrow GL_4(\Bbb C)$, is in $GSp_4(\Bbb C)$ (cf. \cite{GW}, page 244), and 
the parameter ${\rm Sym}^3(\rho): G_{\Q}\longrightarrow GSp_4(\Bbb C)$ corresponds to $\pi_F$.

\section{Siegel cusp forms of solvable type} \label{solvable}

Let $\rho: G_\Q\lra GSp_4(\C)$ and $\bar\rho: G_\Q\lra PGSp_4(\C)$ be as in the introduction.
In this section, we recall K. Martin's result on the strong Artin conjecture for $\rho$ \cite{martin}. He showed the strong Artin conjecture when ${\rm Im}(\bar\rho)$ is a solvable group, $E_{16}\rtimes C_5$, where 
$E_{16}\simeq (\Z/2\Z)^{\oplus 4}$ is the
elementary abelian group of order 16 and $C_5$ is the cyclic group of order 5. 
We denote by $Q_8$ (resp. $D_8$), the quaternion group of order $8$ (resp. dihedral group of order $8$). 
Note that $D_8$ here is denoted as $D_4$ in \cite{JLY}, page 35.  

We will give an explicit example of such $\rho$ which is taken from Section 5 of \cite{martin}, but 
we make a slight change for the reader's convenience. 

Let $\zeta_{11}$ be a primitive 11-th root of unity, and $\alpha_i=\zeta_{11}^i+\zeta_{11}^{-i}$. 
Let $E=\Q(\alpha_1)$ be a quintic extension over $\Q$. Let 
$$K=E(\sqrt{a}),\quad M=E(\sqrt[4]{\alpha_1},\ \sqrt{-1}),
$$
where $a=(1+\frac{1}{\sqrt{u}})(1+\frac{1}{\sqrt{v}})$, $u=1+\alpha^2_3,\ v=1+\alpha_1^2+\alpha_1^2 \alpha_3^2$. Then $K/E, M/E$ are Galois extensions with
${\rm Gal}(K/E)\simeq Q_8,\ {\rm Gal}(M/E)\simeq D_8$. (For $K/E$, let 
$(\alpha,\beta,\gamma)=(\alpha_3,0,\alpha_1)$ in Remark of \cite{JLY}, page 135, and for $M/E$, see  
Theorem 2.2.7 of \cite{JLY}, page 35.)

Let $L=E(\sqrt{u}, \sqrt{v}, \sqrt[4]{\alpha_1},\ \sqrt{-1}))$ and $L_0=
E(\sqrt{u},\ \sqrt{v},\ \sqrt{\alpha_1},\ \sqrt{-1}))$.
Then $L$ is a subextension of $KM$ of index 2 which corresponds to a subgroup of the central product $Q_8D_8={\rm Gal}(KM/E)$ of $Q_8$ and $D_8$. Then
$${\rm Gal}(L/\Q)\simeq ((\Z/2\Z)^{\oplus 2}\times D_8)\rtimes C_5,\quad
{\rm Gal}(L_0/\Q)\simeq ((\Z/2\Z)^{\oplus 2}\times D_8/\{\pm 1\})\rtimes C_5\simeq E_{16}\rtimes C_5.
$$ 
Note that $D_8/\{\pm 1\}$ splits, and hence it is isomorphic to $(\Z/2\Z)^{\oplus 2}$. 

Therefore one has ${\rm Gal}(L/\Q)\hookrightarrow GSp_4(\C)$ by Section 5 of \cite{martin}, and it gives rise to an Artin representation $\rho: G_\Q\lra GSp_4(\C)$. Further $\bar\rho: G_\Q\lra PGSp_4(\C)$ gives ${\rm Gal}(L_0/\Q)\hookrightarrow PGSp_4(\C)$.
An explicit description of $\rho$ is given as follows. 
Let $J_1=
\begin{pmatrix}
1 & 0 \\
0 & -1
\end{pmatrix}$  and $J_2=
\begin{pmatrix}
0 & 1 \\
1 & 0
\end{pmatrix}$. 
Let
$$A_1=\begin{pmatrix}
J_1 & 0_2 \\
0_2 & J_1
\end{pmatrix},\ 
A_2=
\begin{pmatrix}
J_2 & 0_2 \\
0_2 & -J_2
\end{pmatrix},\ 
A_3=
\begin{pmatrix}
\sqrt{-1}J_2 & 0_2 \\
0_2 & \sqrt{-1}J_2
\end{pmatrix},$$
$$A_4=
\begin{pmatrix}
0_2 & \sqrt{-1}J_2 \\
\sqrt{-1}J_2 & 0_2
\end{pmatrix},\ 
A_5=\diag(1,-1,-1,1),
$$ 
$$T=-\frac{1+\sqrt{-1}}{2}
\begin{pmatrix}
-\sqrt{-1} & 0 & 0 & \sqrt{-1} \\
0 & 1 & 1 & 0 \\
1 & 0 &0 & 1 \\
0 & -\sqrt{-1} & \sqrt{-1} & 0 
\end{pmatrix}.$$
Then $\langle A_1,A_2,A_3,A_4,A_5 \rangle\simeq {\rm Gal}(L/E)$ and $\langle T \rangle\simeq C_5$ acts on ${\rm Gal}(L/E)$ by conjugation. 
The Galois action $\sqrt{-1}\mapsto -\sqrt{-1}$ on $L$ (and also on $L_0$) corresponds to $A_5$. Clearly, 
\newline $\langle A_1,A_2,A_3,A_4,A_5 \rangle/\{\pm I_4\}\simeq E_{16}$.

Since the complex conjugate acts on $L_0$ non-trivially, $\rho(c)\neq \pm I_4$. Hence $\rho: G_\Q\lra GSp_4(\C)$ is symplectically odd. K. Martin showed that $\rho$ is modular. So it corresponds to a cuspidal automorphic representation 
$\widetilde{\Pi}$ of $GL_4(\A)$, and descends to a cuspidal representation $\Pi$ of $GSp_4(\A)$. Since $L(s,\Pi_p)=L(s,\rho_p)$ for almost all $p$, by \cite{martin}, Appendix, $L(s,\rho_\infty)=L(s,\Pi_\infty)$. Since $L(s,\rho_\infty)=\Gamma_{\Bbb C}(s)^2$,
the Langlands parameter of $\Pi_\infty$ is $\phi: W_{\Bbb R}\lra GSp_4(\C)$, which is the composition of $i: W_{\Bbb R}\lra G_\R\hookrightarrow G_\Q$ and $\rho$. Hence 
$\phi(z)=I_4$ and $\phi(j)\stackrel{GSp_4(\C)}{\sim} diag(1,-1,-1,1)$. So $\Pi_\infty={\rm Ind}_B^G\, \chi(1,\sgn,\sgn)$. 
As in Section \ref{sym-cube}, there exists a real analytic Siegel cusp form of weight $(2,1)$ which corresponds 
to the Galois representation $\rho$. This gives the existence of Siegel cusp form of weight $(2,1)$ with the eigenvalues $-\frac 5{12}$ and $0$ for the generators $\Delta_1$ and $\Delta_2$ and with integral Hecke polynomials, which does not come from $GL_2$ form.

\end{document}